\def\tsc#1{\csdef{#1}{\textsc{\lowercase{#1}}\xspace}}
\newtheorem{theorem}{Theorem}[section]
\newtheorem{lemma}{Lemma}[section]
\newtheorem{definition}{Definition}[section]
\newtheorem{remark}{Remark}[section]
\newenvironment{proof}[1][Proof]{\noindent\textbf{#1.} }{\ \rule{0.5em}{0.5em}}
\begin{document}
		
   \def\l({\left(}
	\def\r){\right)}

\let\WriteBookmarks\relax
\def\floatpagepagefraction{1}
\def\textpagefraction{.001}
\shorttitle{
	{Towards a model of radiofrequency ablation in cardiac tissue: mathematical analysis and numerical simulation}
}
\shortauthors{Bendahmane, Ouakrim, Ouzrour and Zagour}

\title[mode = title]{Towards a model of radiofrequency ablation in cardiac tissue: mathematical analysis and numerical simulation}

\author[1]{Mostafa Bendahmane}
\address[1]{Institut de Mathématiques de Bordeaux, Université de Bordeaux, 33076 Bordeaux Cedex, France} 
\ead{mostafa.bendahmane@u-bordeaux.fr}

\author[2]{Youssef Ouakrim}
\address[2]{Laboratoire de Math\'ematiques, Mod\'elisation et Physique Appliqu\'ee, Ecole Normale Sup\'erieure de F\`es, Universit\'e Sidi Mohamed Ben Abdellah, Maroc.}
\ead{youssef.ouakrim@usmba.ac.ma}

\author[2]{Yassine Ouzrour}
\ead{yassine.ouzrour@usmba.ac.ma}

\author[3]{Mohamed Zagour}
\cormark[1]
\address[3]{Euromed University of Fes, UEMF, Morocco}
\ead{m.zagour@insa.ueuromed.org}

\begin{abstract}
		This paper deals with the mathematical analysis and numerical simulation of a new nonlinear ablation system modeling radiofrequency ablation phenomena in cardiac tissue and takes into account the phenomena of viscous energy dissipation and electric field. The model consists of a coupled thermistor and the incompressible Navier--Stokes equations that describe the evolution of temperature, velocity and potential in cardiac tissue. \textcolor{blue}{In addition to Faedo--Galerkin method we use Schauder's fixed-point theory to prove the existence of the weak solutions in two- and three-dimensional space}. Moreover, we prove the uniqueness of the solution under some additional conditions on the data and the solution. Finally, we discuss some numerical results for the validation of the proposed model using the finite element method.

\end{abstract}
\begin{keywords}
Bio--heat equation\sep
Navier--Stokes equation\sep
Dissipation of energy \sep
Thermistor problem\sep
Radiofrequency ablation\sep
Cardiac tissue\sep
Catheter ablation\sep
Finite element method. 
\end{keywords}  
\maketitle

 \section{Introduction}

Radiofrequency ablation (RFA) techniques have been increasingly used in various medical fields, including the ablation of tumors in different parts of the body.
One such area is cardiac tissue, where the goal is to eliminate the tissue responsible for cardiac arrhythmia, for example, ventricular arrhythmias, atrial fibrillation, and atrial tachycardia.
During this procedure, a catheter is inserted into the heart to map its electrical activity and identify any diseased areas. These areas are then removed using an ablation catheter, which is heated by inducing electrical energy in a specific border area for a specific period of time. We refer the reader to Figure \ref{Fig-1} for a visual representation of the process.
\begin{figure}[pos=!ht]
	\centering
	\includegraphics[ width=.86\linewidth]{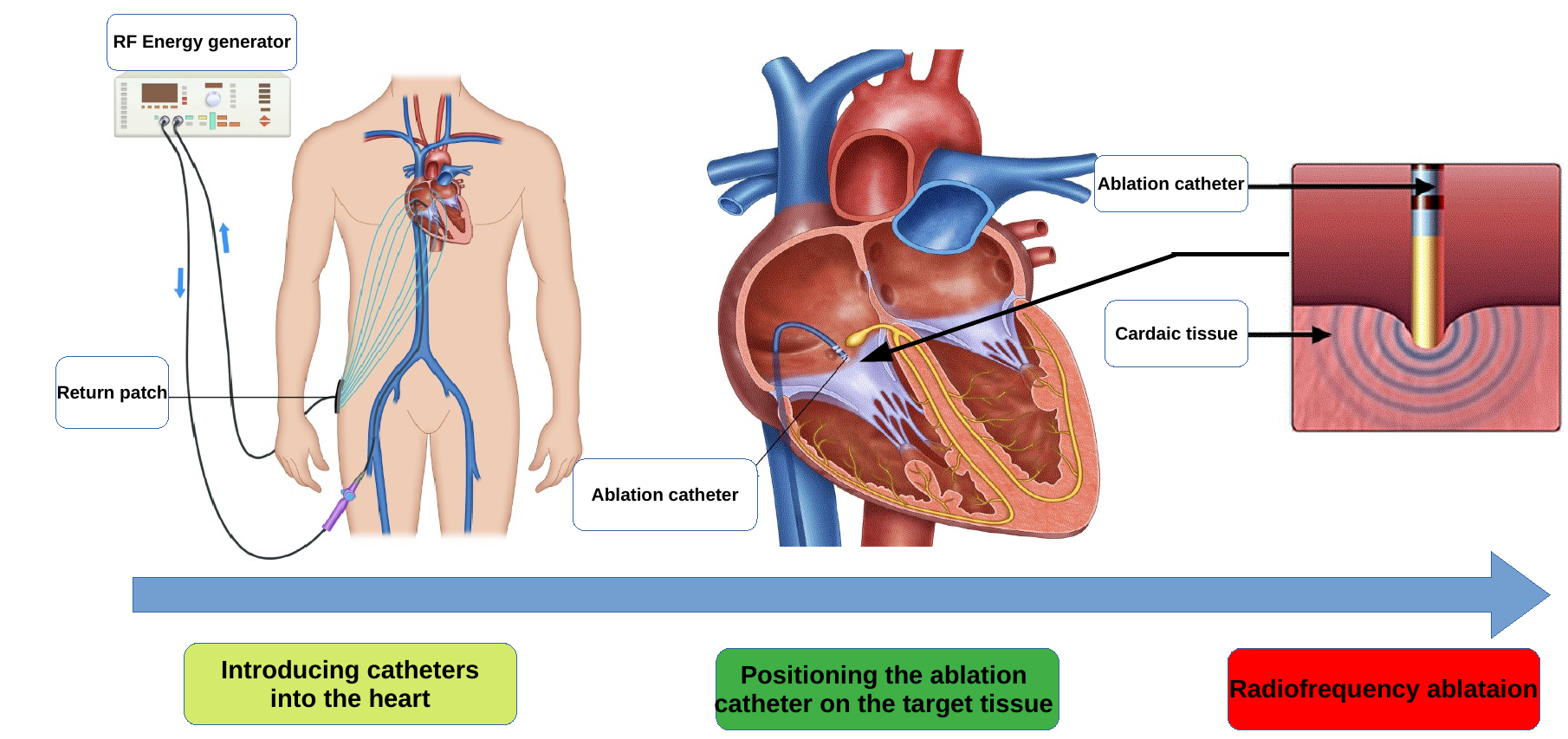}  
	\caption{
		Radiofrequency ablation procedure in cardiac tissue.
		\\
		\url{https://www.melbourneheart.com.au/procedures/electrophysiology/catheter-ablation/}
	}
	\label{Fig-1}
\end{figure} 
%

\noindent
 It is well-known that RFA models are typically described mathematically as a thermistor problem, which is presented as a coupled system of nonlinear partial differential equations (PDEs). Specifically, these equations consist of the heat equation with Joule heating as the source and the current conservation equation with temperature-dependent electrical conductivity \cite{xu1994thermistor}.
In this context, numerous works in the literature focus on accurately modelling the electrical and thermal properties of biological tissues, including those that vary over time as well as temperature, in order to quantify the relationships between characteristic values and the thermal damage function~\cite{ahmed2008}. For further details on modelling the study of radiofrequency ablation techniques, see ~\cite{Berjano2006}. 
The aforementioned reference presents important issues involved in this methodology, including experimental validation, current limitations, especially those related to the lack of precise characterization of biological tissues, and suggestions and future perspectives of this field.
 For example, the application of saline infusion requires the derivation of a suitable model to follow the behavior of the tissue during the simultaneous application of RF energy and the cooling effect. It is worth mentioning that the author in~\cite{Nikhil2021} develops realistic modeling for large and medium blood vessels. While model derivation and fluid mechanics studies of blood flow, for example, in the carotid arteries, basilar trunk, and circle of Willis, are the subject of numerous contributions, see~\cite{FQV09,Ber17,OOI17,QMV17} and references therein.

\noindent 

These studies led us to investigate radiofrequency ablation models, both theoretically and numerically, to obtain critical information on the electrical and thermal behavior of ablation in a quick and cost-effective manner. Additionally, several of these studies have raised questions about potential risks that doctors may face during surgical procedures and ways to avoid them. Moreover, the aim is to develop new techniques or improve existing ones. In another case, the temperature produced by the ablation catheter when it comes into contact with heart tissue can influence blood flow. Inversely, the impact of blood flow on this heat should be considered. In this context, \textcolor{blue}{we propose in this works} 
a new system that models radiofrequency ablation phenomena by coupling the incompressible Navier--Stokes system, which modulates blood flow, with a thermistor model, whose heat source equation takes into account viscous energy dissipation and the electric field. The mathematical formulation of the model is presented below and will be examined in the following sections. 
\begin{equation}
	\left\{
	\begin{array}{rclll}
		\boldsymbol{v}_{t}+\nabla \cdot(\boldsymbol{v} \otimes \boldsymbol{v})-\nabla \cdot(\nu(\theta) \mathbb{D}(\boldsymbol{v}))+\nabla P &=& \boldsymbol{F}(\theta), & \text { in } &\Omega_{T}, \\
		\nabla \cdot \boldsymbol{v} &=& 0, & \text { in } &\Omega_{T} \\
		\boldsymbol{v}&=&\mathbf{0}, & \text { on } &\Sigma_{D}, \\
		-\mathbb{S}(\boldsymbol{v},P)\boldsymbol{n}&=&\mathbf{0}, & \text { on } &\Sigma_{N}, \\
		\boldsymbol{v}(\boldsymbol{x}, 0)&=&\boldsymbol{v}_{0},  & \text { on } &\Omega ,\\
		\theta_{t} - \nabla \cdot(\eta(\theta)k \nabla \theta) + \boldsymbol{v} \cdot \nabla \theta - \nu(\theta) \mathbb{D}(\boldsymbol{v}): \mathbb{D}(\boldsymbol{v})-(\sigma(\theta)\varrho\nabla \varphi)\cdot \nabla \varphi &=&0, & \text { in } & \Omega_{T} , \\
		(\eta(\theta)k \nabla \theta) \cdot \boldsymbol{n} + \alpha \theta &=&\alpha \theta_l, & \text { on } &\Sigma,\\
		\theta(x,0)&=&\theta_{0}, & \text { in }& \Omega , \\
		- \operatorname{div}(\sigma(\theta)\varrho \nabla \varphi)&=&0, & \text { in } & \Omega_T, \\
		(\sigma(\theta)\varrho\nabla \varphi) \cdot \boldsymbol{n}&=& g, & \text { on } &\Sigma_{5}, \\
		\varphi &=&0 ,& \text { on } &\Sigma\setminus\Sigma_{5},
	\end{array}\right.
	\label{System}
\end{equation}
\begin{figure}[pos=!h]
	\centering
	\includegraphics[ width=0.75\linewidth]{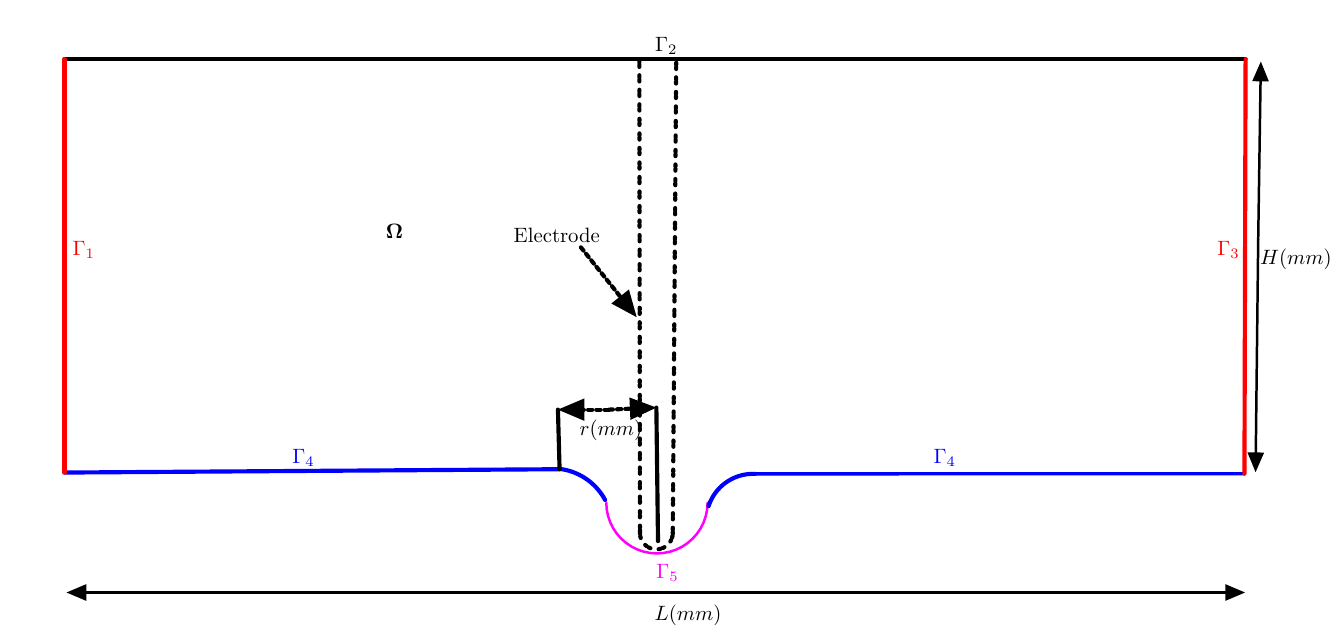}  
	\caption{Description of the computational domain $\Omega$.}
	\label{fig:omega}
\end{figure} 
where  $\Omega \subset \mathbb{R}^{d}$,  $d=2,3$ is a bounded domain with a $C^{1,1}$ boundary $\partial \Omega=\Gamma $. We suppose that $\Gamma_{D}$ and $\Gamma_{N}$ are closed disjoint $(d-1)$-dimensional manifolds of class $C^{1,1}$ such that $\Gamma=\Gamma_{D} \cup \Gamma_{N}$ where $\Gamma_{D}$ represents solid surfaces and $\Gamma_{N}$ denotes the artificial part of the boundary $\partial \Omega$. Note that these notations are presented in Figure \ref{fig:omega} as follows $\Gamma_D=\Gamma_1\cup\Gamma_3$ and $\Gamma_N=\Gamma_2\cup \Gamma_4\cup \Gamma_5$.
  Let $T \in(0, \infty)$ be fixed throughout the paper,  $\Omega_{T}=\Omega \times (0, T)$,  $\Sigma=\Gamma \times (0, T)$, $\Sigma_N=\Gamma_{N} \times (0, T)$, $\Sigma_D=\Gamma_{D} \times (0, T)$ and $\Sigma_i=\Gamma_i \times (0, T)$ for $i=1,\cdots,5$.  
In model \eqref{System},  $\boldsymbol{v}$ is the flow velocity, $P$ is the pressure scaled by the density $\rho$ and the parameter $\nu$ is the kinematic viscosity.  
Moreover, $\mathbb D(\boldsymbol{v}) =\frac 1 2 \left( \nabla\boldsymbol{v} + \nabla \boldsymbol{v} ^T  \right) $ is the strain rate tensor, $\mathbb{S}(\boldsymbol{v},P) =   \nu(\theta)\mathbb D(\boldsymbol{v}) - PI$ is the Cauchy stress tensor, $\boldsymbol{F}$ is a right hand side and $(\eta(\cdot) k)$ represents the heat conductivity.
While $k$ is a prescribed function, $\eta$  is allowed to depend on the temperature $\theta$ and
$\alpha$ is the heat transfer coefficient regulating the convective heat flux through the boundary $\partial \Omega$. 
The functions $\theta_l$ and $\theta_0$ are given boundary and initial data, respectively.
The function $\sigma(\cdot)\varrho$ represents the  electric conductivity,
$\varrho$ is a given prescribed function, $g$ stands for a current which is induced via the boundary part $\Gamma_N$, and  $\sigma$ is allowed to depend on the temperature $\theta$. 
At the inflow, we impose a constant velocity $\boldsymbol{v}$, since blood comes from the microcirculation, modeled by a quasi-steady/steady Stokes flow.   At the wall, we impose $\boldsymbol{v} = 0$, since intracranial veins are constrained between a nearly incompressible brain and the rigid skull and at the outflow, we impose in a first approximation $\mathbb{S}(\boldsymbol{v}, p)n = 0$, called do-nothing classical approach. \\

\noindent \textcolor{blue}{The highlights of the present paper can be stated as follows: Our study concerns three main parts: modeling, well-posedness of the model, and numerical simulation. In the modeling part, our model (1) is a new improved model of the one proposed in \cite{ref1} by considering the radiofrequency ablation phenomena in cardiac tissue and the phenomena of viscous energy dissipation and electric field. We mention that our proposed model contains coupling terms $(\sigma(\theta)\varrho\nabla \varphi)\cdot \nabla \varphi$ and $	- \operatorname{div}(\sigma(\theta)\varrho \nabla \varphi)$. Thus, from a modeling viewpoint, this is more close to reality. Indeed, inserting the potential into to model allows to create an electric field intensity, denoted by $\mathbf{E}$ in units of Volts per meter $(\mathrm{V/m})$, and the current density $\mathbf{J}$ in Amperes per square meter $(\mathrm{A/m}^2)$ which are calculated from the following relationships: $\mathbf{E} = -\nabla \varphi$ and $\mathbf{J} = \sigma \mathbf{E}$. We mention that the local power density responsible for tissue heating is determined by multiplying the current density $\boldsymbol{J}$ by the electric field intensity $\boldsymbol{E}$. Thus, this power density is used to compute the temperature distribution within the tissue using the heat-transfer equation, see \cite{Dieter2010}. Concerning the mathematical analysis part, we prove the existence of the weak solutions in both two and three-dimensional spaces by using Faedo--Galerkin method and Schauder fixed-point theory to deal with the strong nonlinearities in our model. In addition, we prove the uniqueness of the weak solution under some additional conditions on the data and on the solutions. The last part deals with the numerical simulation. First, the variational formulation is discretized by the finite element method in a domain with fairly realistic geometry. Second, some numerical experiments of the proposed model are provided. It is worth mentioning that the study of the proposed model have a challenges in the theoretical and numerical investigations. In fact, the model (1) has a strong nonlinearities namely: the convective term $\nabla \cdot(\boldsymbol{v}\otimes \boldsymbol{v})$ and $\nabla\cdot(\nu(\theta)\mathbb{D}(\boldsymbol{v})) $ of Navier--Stokes and the transport term $\boldsymbol{v}\cdot \nabla \theta$, the dissipative terms ($  \nu(\theta) \mathbb{D}(\boldsymbol{v}): \mathbb{D}(\boldsymbol{v})$ and $(\sigma(\theta)\varrho\nabla \varphi)\cdot \nabla \varphi$) with quadratic growth of $\nabla \boldsymbol{v}$ and $\nabla \varphi$. Note that the classical techniques such as the energy method can not be used to prove global (in time) existence results. For this reason, addition to the Faedo--Galerkin method we use the point-fixed strategy. Additionally, the physical and biological properties of the tissues present a serious obstacles. Indeed, all model variables must fall within specific ranges and the results of numerical experiments must be consistent with these criteria. For example, the electrical and thermal conductivities show significantly variable values due to phenomena associated with the high temperatures reached during RFA, such as the vaporization of water at temperatures close to $100\, ^{\circ}C$ and the ensuing sudden increase in impedance, which hampers the delivery of RF power, thus limiting the size of the lesion.} \\

 We mention that systems reduced to heat-potential coupled models (thermistors) or to Navier--Stokes-heat coupled models are widely discussed in the literature. Let us quote here some references for the theoretical analysis of the first coupling, that is to say, the models of thermal potential.
Time-dependent thermistor equations in particular have been widely studied as described in ~\cite{Allegretto1992, ref2, Li-Yang2015, Yuan1994}.
Among these works: the existence of the solution using the maximum principle and the fixed point argument in \cite{Allegretto1992}, the existence of the weak solution for an arbitrarily large time interval using the Faedo--Galerkin method in \cite{ref2}.
Recently,  the existence and uniqueness of the solution for the thermistor problem without non-degenerate assumptions in \cite{Li-Yang2015}.
For the special case where the thermal conductivity is constant, the authors in~\cite{Yuan1994} proved the existence and uniqueness of the solution in three-dimensional space and its continuity $\alpha$-H{\"o}lder, it is possible to obtain greater regularity of the solution by making appropriate assumptions about the initial and boundary conditions. Moreover, this system has motivated other areas of applied mathematics, such as optimal control and inverse problems, namely the identification of the frequency factor and the energy of the thermal damage function for different types of tissues such as liver, breast, heart, etc., and the development of rapid numerical simulation to predict tissue temperature and thus provide simultaneous guidance during an intervention~\cite{johnson2002,villard2005}.
We also cite the two interesting works~\cite{MEINLSCHMIDT-part1,MEINLSCHMIDT-part2} where the well-posed character is shown and the optimality conditions are derived by considering the parameter $g$ as a boundary check.
\\

\noindent The theoretical studies of the second coupling have been the subject of several works, we refer the reader to ~\cite{Benes2011,Benes2007,ref1,deteix2014coupled} and the references contained therein.
Among these works, the authors of \cite{deteix2014coupled} studied the case where viscosity and thermal conductivity are nonlinear and temperature dependent. In the aforementioned paper, the authors derived the existence of solutions, without restriction on the data, by Brouwer's fixed point theorem.
On the other hand, in \cite{deugoue2021globally} the authors have studied the existence and the uniqueness of the solution using the Brouwer fixed point, the Faedo--Galerkin method, and some compactness results for a model variant of this coupling namely, the globally modified Navier--Stokes problem coupled to the heat equation. The authors studied the stability of the discrete solution in time using the energy approach.
We mention the paper \cite{ref1} where the authors considered the external force in the heat equation containing an energy dissipation term. Moreover, they proved the existence of the solution for three-dimensional space using Galerkin's method and Schauder's fixed point theorem. \\

\noindent From a computational point of view, there are very few computational analyses for the general case. 
We mention the work in \cite{Allegretto1999} where the semi-discretization in space by the finite volume method has been proposed to solve the thermistor problem. 
The $L^{2}$-norm and $H^{1}$-norm error estimates have been obtained for the piecewise linear approximation, a linearized $\theta$-Galerkin finite element method is proposed to solve the coupled system, and optimal error estimates are derived in different cases, including the standard Crank--Nicolson and shifted Crank--Nicolson schemes in~\cite{Mbehou2018}.
Numerical methods and analysis for the thermistor system for special conductivities, namely, for the linear and the exponential choices, have been investigated by many authors~\cite{Akrivis2005,  ref2, Elliott1995, Li2014, Li2012}. 
For a constant thermic conductivity in two-dimensional space, the optimal $L^{2}$-norm error estimate of a mixed finite element method with a linearized semi-implicit Euler scheme was obtained in \cite{Akrivis2005} under a weak time-step condition. 
The error analysis for the three-dimensional space is given in  \cite{Elliott1995} using a linearized semi-implicit Euler scheme with a linear Galerkin finite element method. 
An optimal $L^{2}$-norm error estimate was obtained under specific conditions on the step size discretization. 
For the $d$-dimensional space $(d=2,3)$, the authors in~\cite{Li2012} proved the time-step condition of commonly-used linearized semi-implicit schemes for the time-dependent nonlinear Joule heating equations with Galerkin finite element approximations and optimal error estimates of a Crank--Nicolson Galerkin method for the nonlinear thermistor equations~\cite{Li2014} and backward differential formula type similarly schemes approximations~\cite{Gao2015}. 
Different methods have been considered to approximate the Navier--Stokes equations coupled to the heat equation \cite{ antonietti2022virtual, BULICEK2009,deteix2014coupled}. 
The authors in \cite{deteix2014coupled} presented a convergence analysis for an iterative scheme based on the so-called coupled prediction scheme. 
Finally, the virtual element discretization of the Navier--Stokes equations coupled to the heat equation where the viscosity depends on temperature was studied in \cite{antonietti2022virtual}. The authors showed that it is well-posed and proved optimal error estimates for this discretization.

 The rest of this paper is organized as follows. In the next section, we introduce the basic notations and some appropriate functional spaces. Then, we formulate the problem according to a variational framework and introduce one of the main results of our work. In Section \ref{Section3}, we investigate the existence, uniqueness, and energy estimates of solutions to linearized (decoupled) initial boundary value problems for the Navier--Stokes, electric potential, and heat with non-smooth coefficients. Moreover, we prove the existence item of the main result using Schauder's fixed point. To complete the proof of the main result, we prove the uniqueness of the solution. Finally, we discuss in Section \ref{Section5} some numerical simulation in two-dimensional space by the finite element method. 

\section{Mathematical frameworks and variational  formulation}
\textcolor{blue}{In this section we introduce fundamental notations and appropriate functional spaces. Next, we formulate the problem within a variational framework and finally we present the well-posedness result for the proposed model.}

We consider $p$, $q$, $r$, $p' \in [1,\infty ]$, where $ p'$ denotes the conjugate exponent to $p > 1$ namely $1/p+1/p'=1$. For an arbitrary $r \in[1,+\infty]$, $L^{r}(\Omega)$ is the usual Lebesgue space equipped with the norm $\|\cdot\|_{L^{r}(\Omega)}$, and $W^{m, r}(\Omega)$, $m \geq 0$ $(m$ need not to be an integer$)$, denotes the usual Sobolev space with the norm $\|\cdot\|_{W^{m, r}(\Omega)}$. By $C(0,T; E)$ we denote the space of all abstract	functions $\psi$ such that $\psi$: $(0,T) \longmapsto E$ is continuous, where $E$ is a Banach space.  Further, we denote by $W^{-m,p}(\Omega)$ the dual space of $W^{m,p'}(\Omega)$.  
For simplicity reason, we denote shortly $\mathbf{W}^{m, p}(\Omega) \equiv W^{m, p}(\Omega)^{d}$, $\mathbf{L}^{r} (\Omega) \equiv L^{r}(\Omega)^{d}$, $\lambda(\cdot):=\sigma(\cdot)\varrho$ and $\gamma(\cdot):=\eta(\cdot)k$.  \\

\noindent For the mathematical analysis of our model (\ref{System}), we use the following embedding results (see~\cite[Theorem 7.58]{Adams} and ~\cite{Kufner})
\begin{equation}\label{inject1}
	\begin{array}{llll}
		W^{m, p}(\Omega) & \hookrightarrow L^{q}(\Omega), & \|\phi \|_{L^q{(\Omega)}} \leq c\|\phi \|_{W^{m, p}(\Omega)}, & p \leq q<\infty, mp=d, \\
		W^{m, p}(\Omega) & \hookrightarrow L^{q}(\Omega), & \|\phi\|_{L^q(\Omega)} \leq c\|\phi\|_{W^{m, p}(\Omega)}, & p \leq q \leq d p /(d-m p), m p<d,\\
		W^{m, p}(\Omega) & \hookrightarrow L^{\infty}(\Omega), & \|\phi\|_{L^{\infty}(\Omega)} \leq c\|\phi\|_{W^{m, p}(\Omega)}, & m p>d,
	\end{array} 
\end{equation}
for every $\phi \in W^{m, p}(\Omega)$. Further, there exists a continuous operator $\mathfrak{R}_0: W^{m, p}(\Omega) \rightarrow L^{q}(\partial \Omega)$ such that
\begin{equation}\label{trace}
	\|\mathfrak{R}_0(\phi)\|_{L^{q}(\partial \Omega)} \leq c\|\phi\|_{W^{m, p}(\Omega)}\;  \forall \phi \in W^{m, p}(\Omega) \hbox{ with }\left\{\begin{array}{ll}
		1 \leq mp<d, & q=\frac{d p-p}{d-m p}, \\
		p \geq \max \{1,d / m\}, & q \in[1, \infty).
	\end{array}\right.
\end{equation}
For $s$ be real number such that $s \leq m+1, s-1 / p=k+\sigma$, where $k \geq 1$ is an integer and $0<\sigma<1$, the following mapping $\mathfrak{R}_1$ is continuous 
\begin{equation}\label{trace-N}
	\begin{aligned}
		\mathfrak{R}_1: W^{s, p}(\Omega) & \rightarrow  W^{s-1-1 / p, p}(\Gamma), \\
		\varphi & \mapsto   \frac{\partial \varphi}{\partial n}{ \mid_\Gamma}.
	\end{aligned}
\end{equation}
Let consider the following spaces  
$$
\begin{array}{ll} 
	\mathcal{E}_{\boldsymbol{v}}:=&\left\{\boldsymbol{v} \in \boldsymbol{C}^{\infty}(\overline {\Omega}) ;\text{}\operatorname{div} \boldsymbol{v}=0,\text{} \operatorname{supp} \boldsymbol{v} \cap \Gamma_{D}=\emptyset\right\}\text{, }\\
	\mathcal{E}_{\varphi}:=&\left\{\varphi \in C^{\infty}(\overline {\Omega}) ;\text{} \operatorname{supp} \varphi \cap \Gamma_{D}=\emptyset\right\}\text{, }\\
	\mathcal{E}_{\theta}:=&\left\{\theta \in C^{\infty}(\overline {\Omega});\text{} \operatorname{supp}\theta \text{ is compact }\right\},
\end{array}
$$      
and let $\mathbf{V}_{\boldsymbol{v}}^{m, p}$ be the closure of $\mathcal{E}_{\boldsymbol{v}}$ in the norm of $\mathbf{W}^{m, p}(\Omega)$, $m \geq 0$ and $1 \leq p \leq \infty$. Similarly, let $V_{\varphi}^{m, p}$ and $V_{\theta}^{m, p}$ are the closures of $\mathcal{E}_{\varphi}$ and $\mathcal{E}_{\theta}$ in the norm of $W^{m, p}(\Omega)$. Then $V_{\theta}^{m, p}$, $V_{\varphi}^{m, p}$  and $\mathbf{V}_{\boldsymbol{v}}^{m, p}$ are Banach spaces with the norms of the spaces $W^{m, p}(\Omega)$ and $ \mathbf{W}^{m, p}(\Omega)$, respectively. Note that the Banach space $V_{\varphi}^{}$  is defined by 
$ V_{\varphi}=\{\phi \in {V}_{\varphi}^{1,2}, \nabla\phi\in \mathbf{L}^{4}(\Omega)\}$ equipped with the norm
$$
{\|{\phi}\|}_{V_{\varphi}}:=\|\phi\|_{V_{\varphi}^{1,2}}+\|\nabla{\phi}\|_{\mathbf{L}^4(\Omega)}.
$$ 
Finally, for $m>0, \mathbf{V}_{v}^{-m, p}$ denotes the dual space of $\mathbf{V}_{v}^{m, p'}$ normed by  $$
{\|\mathbf{v}\|}_{\mathbf{V}_{v}^{-m, p}}=\sup_{\mathbf{0} \neq \mathbf{w} \in \mathbf{V}_{v}^{m, p'}}\frac{|\langle\boldsymbol{v}, \mathbf{w}\rangle|}{\|\mathbf{w}\|_{\mathbf{W}^{m, p'}} },
$$
where $\langle\cdot, \cdot\rangle$ denotes the duality pairing.

\noindent If the functions $\boldsymbol{v}$, $\boldsymbol{w}$, $\boldsymbol{z}$, $\theta$, $\phi$, $\varphi$, $\chi$ and $\psi$ are sufficiently smooth so that the following integrals make sense, we also  introduce the following notations: 
\begin{align*} 
	(\boldsymbol{v}, \boldsymbol{w})&=\int_{\Omega} \boldsymbol{v} \cdot \boldsymbol{w} ~d{\mathbf{x}}
	, \qquad
	& (\theta, \psi)_{\Gamma}&=\int_{\Gamma}   \theta \psi ~\mathrm{d} \Gamma, \\
	a_{u}(\theta ; \boldsymbol{v}, \boldsymbol{w})&=\int_{\Omega} \nu(\theta) \mathbb{D}(\boldsymbol{v}): \mathbb{D}(\boldsymbol{w}) ~d{\mathbf{x}} , \qquad
	&
	\tilde{a}_{u}(\boldsymbol{v}, \boldsymbol{w})&=\int_{\Omega} \mathbb{D}(\boldsymbol{v}): \mathbb{D}(\boldsymbol{w}) ~d{\mathbf{x}}, \\
	a_{\theta}(\phi ; \theta, \psi)&=\int_{\Omega} \gamma(\phi) \nabla \theta \cdot \nabla \psi ~d{\mathbf{x}}, \qquad
	&
	\tilde{a}_{\theta}(\theta, \varphi)&=\int_{\Omega} \nabla \theta \cdot \nabla \varphi ~d{\mathbf{x}}, \\
	c_{\varphi}(\phi ,\varphi,\psi )&=\int_{\Omega} \lambda(\phi) \nabla \varphi \cdot \nabla \varphi \psi ~d{\mathbf{x}}, \qquad
	&
	a_{\varphi}(\phi ,\varphi,\chi )&=\int_{\Omega} \lambda(\phi) \nabla \varphi \cdot \nabla \chi ~d{\mathbf{x}}, \\
	d(\boldsymbol{v}, \theta, \psi)&=\int_{\Omega} (\boldsymbol{v} \cdot \nabla \theta) \psi ~d{\mathbf{x}} , \qquad
	&
	e(\theta ; \boldsymbol{v}, \boldsymbol{w}, \psi)&=\int_{\Omega} \nu(\theta) \mathbb{D}(\boldsymbol{v}): \mathbb{D}(\boldsymbol{w}) \psi~d{\mathbf{x}},\\
	b(\boldsymbol{v}, \boldsymbol{w}, \boldsymbol{z})&=\int_{\Gamma_{N}}(\boldsymbol{v} \otimes \boldsymbol{w}):(\boldsymbol{n} \otimes \boldsymbol{z}) \mathrm{d} \Gamma-\int_{\Omega}(\boldsymbol{v} \otimes \boldsymbol{w}): \mathbb{D}(\boldsymbol{z}) ~d{\mathbf{x}}.
\end{align*}  

To formulate model (\ref{System}) in a variational sense and then state the main result of the paper, the following smoothness property is needed. 
\begin{lemma}[cf \cite{ref1}]\label{lem} Let $\boldsymbol{\mathcal{U}}$  a Banach space  defined by  
	$$
	\boldsymbol{\mathcal{U}}:=\left\{\boldsymbol{z} \mid \boldsymbol{z} \in L^{\infty}\left(0,T ; \mathbf{V}_{\boldsymbol{v}}^{0,4}\right) \cap L^{4}\left(0,T ; \mathbf{V}_{\boldsymbol{v}}^{1,4}\right)\right\}\text{, }
	$$
	equipped with the norm 
	$$
	{\|\boldsymbol{z}\|}_{\boldsymbol{\mathcal{U}}}:=\|\boldsymbol{z}\|_{L^{\infty}\left(0,T ; \mathbf{V}_{\boldsymbol{v}}^{0,4}\right)}+\|\boldsymbol{z}\|_{L^{4}\left(0,T ; \mathbf{V}_{\boldsymbol{v}}^{1,4}\right)}.
	$$ 
	Then  \begin{equation}\label{injet2}
		\boldsymbol{\mathcal{U}} \hookrightarrow  L^{64 / 7}\left(0,T ; \mathbf{W}^{7 / 16,4}\right).
	\end{equation}
	In addition, for all $ (\boldsymbol{v}, \boldsymbol{w}) \in \boldsymbol{\mathcal{U}}^2$, $b(\boldsymbol{v}, \boldsymbol{w}, \cdot) \in L^{4}\left(0,T ; \mathbf{V}_{\boldsymbol{v}}^{-1,4}\right)$ and there exists some positive constant $C_{b},$ independent of $T,$ such that
	\begin{equation}\label{cstb}
		\|b(\boldsymbol{v}, \boldsymbol{w}, \cdot)\|_{L^{4}\left(0,T ; \mathbf{V}_{\boldsymbol{v}}^{-1,4}\right)} \leq C_{b}T^{1/32} {\|\boldsymbol{v}\|}_{\boldsymbol{\mathcal{U}}}{\|\boldsymbol{w}\|}_{\boldsymbol{\mathcal{U}}}\text{.}
	\end{equation}
	
\end{lemma}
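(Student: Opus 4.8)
The plan is to prove the two assertions in turn: first the continuous embedding \eqref{injet2}, and then to combine it with a pointwise-in-time estimate for $b$ to obtain \eqref{cstb}.

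For the embedding, I would interpolate in the space variable between the two endpoint spaces defining $\boldsymbol{\mathcal{U}}$. At almost every time $t$, the inclusion $\boldsymbol{z}(t)\in\mathbf{V}_{\boldsymbol{v}}^{0,4}\cap\mathbf{V}_{\boldsymbol{v}}^{1,4}$ yields, via the interpolation inequality $\|\boldsymbol{z}(t)\|_{\mathbf{W}^{7/16,4}}\le C\|\boldsymbol{z}(t)\|_{\mathbf{W}^{0,4}}^{9/16}\|\boldsymbol{z}(t)\|_{\mathbf{W}^{1,4}}^{7/16}$, in which $7/16$ is exactly the interpolation parameter producing the fractional order $7/16$. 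Raising to a power $q$, bounding the $\mathbf{W}^{0,4}$ factor by $\|\boldsymbol{z}\|_{L^{\infty}(0,T;\mathbf{V}_{\boldsymbol{v}}^{0,4})}$ and integrating in time, the value $q=64/7$ is forced by the requirement $\tfrac{7}{16}q=4$, so that the remaining time integral is precisely $\|\boldsymbol{z}\|_{L^{4}(0,T;\mathbf{V}_{\boldsymbol{v}}^{1,4})}^{4}$. This gives $\|\boldsymbol{z}\|_{L^{64/7}(0,T;\mathbf{W}^{7/16,4})}\le C\|\boldsymbol{z}\|_{\boldsymbol{\mathcal{U}}}$, which is \eqref{injet2}.

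For \eqref{cstb}, I would first estimate $\|b(\boldsymbol{v}(t),\boldsymbol{w}(t),\cdot)\|_{\mathbf{V}_{\boldsymbol{v}}^{-1,4}}$ at fixed $t$ by testing against $\boldsymbol{z}\in\mathbf{V}_{\boldsymbol{v}}^{1,4/3}$ and treating the two contributions to $b$ separately. For the volume term $\int_{\Omega}(\boldsymbol{v}\otimes\boldsymbol{w}):\mathbb{D}(\boldsymbol{z})$, Hölder with exponents $(8,8,4/3)$ and the Sobolev embedding $\mathbf{W}^{7/16,4}\hookrightarrow\mathbf{L}^{8}$ from \eqref{inject1} (valid for $d=2,3$ since $\tfrac{7}{16}\cdot 4<d$) give a bound by $\|\boldsymbol{v}\|_{\mathbf{W}^{7/16,4}}\|\boldsymbol{w}\|_{\mathbf{W}^{7/16,4}}\|\nabla\boldsymbol{z}\|_{\mathbf{L}^{4/3}}$. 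For the boundary term on $\Gamma_{N}$, I would use the trace operator $\mathfrak{R}_0$ of \eqref{trace}: for $d=3$ (the case $d=2$ being analogous and easier), the trace of $\boldsymbol{z}\in\mathbf{W}^{1,4/3}$ lies in $\mathbf{L}^{8/5}(\Gamma)$, while the traces of $\boldsymbol{v},\boldsymbol{w}\in\mathbf{W}^{7/16,4}$ lie in $\mathbf{L}^{32/5}(\Gamma)\hookrightarrow\mathbf{L}^{16/3}(\Gamma)$, and Hölder with $(16/3,16/3,8/5)$ closes the estimate. Combining the two pieces yields $\|b(\boldsymbol{v}(t),\boldsymbol{w}(t),\cdot)\|_{\mathbf{V}_{\boldsymbol{v}}^{-1,4}}\le C\|\boldsymbol{v}(t)\|_{\mathbf{W}^{7/16,4}}\|\boldsymbol{w}(t)\|_{\mathbf{W}^{7/16,4}}$.

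It then remains to integrate in time. Raising this pointwise bound to the fourth power and integrating, I would apply Hölder in $t$: writing the integrand as $\|\boldsymbol{v}(t)\|^{4}\|\boldsymbol{w}(t)\|^{4}\cdot 1$ and noting that, by \eqref{injet2}, each factor $\|\cdot\|^{4}$ belongs to $L^{16/7}(0,T)$, the product lies in $L^{8/7}(0,T)$ and is paired with $1\in L^{8}(0,T)$, producing a factor $T^{1/8}$. Taking fourth roots gives the exponent $T^{1/32}$ in \eqref{cstb} and converts the $L^{64/7}(\mathbf{W}^{7/16,4})$ norms into $\|\boldsymbol{v}\|_{\boldsymbol{\mathcal{U}}}\|\boldsymbol{w}\|_{\boldsymbol{\mathcal{U}}}$ again via \eqref{injet2}, with the constant $C_{b}$ independent of $T$. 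The main obstacle is the pointwise estimate: one must select Hölder and trace exponents that simultaneously close the volume and boundary terms uniformly in $d=2,3$, and verify that the fractional trace spaces delivered by \eqref{trace} indeed dominate the required Lebesgue exponents on $\Gamma_{N}$. Once this is in place, the time integration is routine bookkeeping dictated by the exponent $64/7$ fixed in the first step.
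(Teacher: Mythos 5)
Your proof is correct: the exponent bookkeeping all checks out --- the interpolation parameter $7/16$ forces $q=64/7$ via $\tfrac{7}{16}q=4$; the embedding $\mathbf{W}^{7/16,4}\hookrightarrow \mathbf{L}^{8}(\Omega)$ holds for $d=2,3$ since $mp=7/4<d$ and $dp/(d-mp)\geq 8$; the trace exponents $8/5$ (for $\boldsymbol{z}\in\mathbf{W}^{1,4/3}$) and $32/5\geq 16/3$ (for $\boldsymbol{v},\boldsymbol{w}$) close the boundary term with $\tfrac{3}{16}+\tfrac{3}{16}+\tfrac{5}{8}=1$; and the final H\"older pairing of $L^{8/7}(0,T)$ with $1\in L^{8}(0,T)$ yields $T^{1/8}$, hence $T^{1/32}$ after the fourth root, with all constants independent of $T$. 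Note that the paper itself offers no proof of this lemma --- it is quoted from \cite{ref1} --- and your argument is precisely the standard interpolation-plus-Sobolev/trace route used there, so this is a faithful reconstruction rather than an alternative approach.
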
  
We will solve the system (\ref{System}) with the followings assumptions: 

\begin{itemize}
	\item [\textbf{(A1).}] The functions  
	$\boldsymbol{F}=\boldsymbol{F}(\cdot)$, $\nu=\nu(\cdot)$,   $\lambda=\lambda(\cdot)$ and  $\gamma=\gamma(\cdot)$ being positives, bounded and continuous for the temperature. Without any further reference, we assume 
	\begin{eqnarray}
		0\leq F_{i}(s) \leq C_{F}<+\infty \qquad & \forall  s\in \mathbb{R}, i=1,...,d, \label{condF}\\
		0<\nu_{1} \leq \nu(s) \leq \nu_{2}<+\infty \qquad & \forall s\in \mathbb{R},\label{cond-nu}\\	
		0<\lambda_{1} \leq \lambda(s) \leq \lambda_{2}<+\infty \qquad & \forall s\in \mathbb{R}, \label{cond-lambda}\\
		0<\gamma_{1} \leq \gamma(s) \leq \gamma_{2}<+\infty \qquad & \forall s\in \mathbb{R}, \label{cond-gamma}
	\end{eqnarray}
	where $C_F$, $\nu_1$, $\nu_2$, $\lambda_1$, $\lambda_2$, $\gamma_1$ and  $\gamma_2$  are positive constants.
	\item [\textbf{(A2).}] The initials data  $ \boldsymbol{v}_{0} \in \mathbf{V}_{\boldsymbol{v}}^{1 / 2,4}$, $ \theta_{0} \in L^{2}$.
	\item [\textbf{(A3).}] The other assumptions on the data are, 
	\begin{equation}\label{assump-data} 
		\mathbf{F} \in L^{4}\left(0,T ; \mathbf{V}_{\boldsymbol{v}}^{-1,4}\right) ,\text{ }\theta_{l} \in L^{2}\left(0,T ; V_{\theta}^{1,2}\right) \cap L^{2}\left(0,T ; V_{\theta}^{-1,2}\right) \text{ and } \, g\in L^4\left(0,T; W^{-1/2,2}(\Gamma)\right).	
	\end{equation}
	\item [\textbf{(A4).}]	 There exists a constant $C_S$ (to be specified later, cf (\ref{cnstStokes})) such  that 
	\begin{equation}\label{cond1}
		C_{S}\left(\nu_{2}-\nu_{1}\right)<1.
	\end{equation}
	\item [\textbf{(A5).}] There exists $\beta \in\left(0,1 / 2\left(1-C_{S}\left(\nu_{2}-\nu_{1}\right)\right)\right)$ such that (recall that the constants $C_F$ and $C_S$ are defined in \eqref{condF} and \eqref{cond1}, respectively)
	\begin{equation}\label{cond2}
		C_{S} C_{E} C_{F} C_d(\Omega,T) +C_{S}\left\|\boldsymbol{v}_{0}\right\|_{\mathbf{V}_{\boldsymbol{v}}^{1 / 2,4}}  \leq \frac{\beta^{2}}{C_{S} C_{b} T^{1 / 32}},
	\end{equation}
\end{itemize}
where 
$C_d(\Omega,T)=T^{1/4}d^{1/2}\operatorname{meas}(\Omega)^{1/4} $,  
$C_{E} $ is the constant of the embedding $\mathbf{W}^{1,4 / 3} \hookrightarrow ~ \mathbf{L}^{4 / 3}$ and $C_b$ is a given constant from $(\ref{cstb})$. \\

\noindent We will utilize the following notion of weak solution for our model (\ref{System}).
\begin{definition}\label{def-weak-solution} (Weak solution).
	A triplet $(\boldsymbol{v}, \theta, \varphi)$ is called variational solution of the problem (\ref{System}) if $ \boldsymbol{v}_{0} \in \mathbf{V}_{\boldsymbol{v}}^{1 / 2,4}$, $ \theta_{0} \in L^{2}$,
	$\boldsymbol{v} \in \boldsymbol{\mathcal{U}}$, $  \boldsymbol{v}_{t} \in L^{4}\left(0,T ; \mathbf{V}_{\boldsymbol{v}}^{-1,4}\right)$, 
	$ \theta \in L^{2}\left(0,T; V_{\theta}^{1,2}\right)$, $ \theta_{t} \in L^{2}\left(0,T ; V_{\theta}^{-1,2}\right)$ and  
	$\varphi \in L^{4}\left(0,T ; V_{\varphi}^{}\right)$, 		
	and  the following variational formulations
	\begin{eqnarray}
		\left\langle\boldsymbol{v}_{t}, \boldsymbol{w}\right\rangle+a_{u}(\theta ; \boldsymbol{v}, \boldsymbol{w})+b(\boldsymbol{v}, \boldsymbol{v}, \boldsymbol{w})&=&\left\langle\boldsymbol{F}(\theta), \boldsymbol{w}\right\rangle, \label{eqvar1} \\  
		\left\langle\theta_{t}, \psi \right\rangle+a_{\theta}(\theta ; \theta, \psi )+d(\boldsymbol{v}, \theta, \psi)+\alpha(\theta, \psi )_{\Gamma}   
		-e(\theta ; \boldsymbol{v}, \boldsymbol{v}, \psi)-c_{\varphi}(\theta ,\varphi,\psi )&=&\alpha(\theta_{l}, \psi )_{\Gamma,}\label{eqvar2} \\ 			a_{\varphi}(\theta ; \varphi, \chi )&=&{(g, \chi )}_{\Gamma_N},\label{eqvar3}
	\end{eqnarray}
	hold   for every $\left( \boldsymbol{w}, \psi, \chi \right) \in \mathbf{V}_{\boldsymbol{v}}^{1,4 / 3} \times V_{\theta}^{1,2}  \times V_{\varphi}^{1,2}$ and for almost every $t \in (0,T)$ and
	\begin{eqnarray}
		\boldsymbol{v}(\boldsymbol{x}, 0) &=\boldsymbol{v}_{0}(\boldsymbol{x})  & \text { in }  \Omega,\label{eqvar4} \\
		\theta(\boldsymbol{x}, 0) &=\theta_{0}(\boldsymbol{x})  & \text { in }  \Omega. \label{eqvar5}
	\end{eqnarray}
\end{definition}

\noindent Our main result is
\begin{theorem}(Well-posedness).
	\begin{enumerate}
		\item \textbf{Existence}: 
		Assume that assumptions $\textbf{(A1)}$, $\textbf{(A2)}$, $\textbf{(A3)}$, $\textbf{(A4)}$, and $\textbf{(A5)}$  hold. Then System (\ref{eqvar1})-(\ref{eqvar3}) has a weak solution $\left(\boldsymbol{v}, \theta, \varphi\right) \in \boldsymbol{\mathcal{U}}\times  C(0,T;V_{\theta}^{1,2}) \times  L^{4}\left(0,T ; V_{\varphi} \right)$ in the sense of Definition \ref{def-weak-solution}.
		
		%
		
		\item \textbf{Uniqueness}: Let, in addition to assumptions $\textbf{(A1)}$-$\textbf{(A5)}$  $\mathbf{F}$, $\nu$, $\lambda$ and $\gamma$ are Lipschitz continuous, i.e 
		\begin{equation}\label{lipchtz}
			\begin{aligned}
				\left|\mathbf{F}\left(z_{1}\right)-\mathbf{F}\left(z_{2}\right)\right| & \leq L_{\mathbf{F}}\left|z_{1}-z_{2}\right| \quad \forall z_{1}, z_{2} \in \mathbb{R}\left(L_{\mathbf{F}}=\text { const }>0\right),\\
				\left|\nu\left(z_{1}\right)-\nu\left(z_{2}\right)\right| & \leq L_{\nu}\left|z_{1}-z_{2}\right| \quad \forall z_{1}, z_{2} \in \mathbb{R}\left(L_{\nu}=\text { const }>0\right),\\
				\left|\lambda\left(z_{1}\right)-\lambda\left(z_{2}\right)\right| & \leq L_{\lambda}\left|z_{1}-z_{2}\right| \quad \forall z_{1}, z_{2} \in \mathbb{R}\left(L_{\lambda}=\text { const }>0\right),\\
				\left|\gamma\left(z_{1}\right)-\gamma\left(z_{2}\right)\right| & \leq L_{\gamma}\left|z_{1}-z_{2}\right| \quad \forall z_{1}, z_{2} \in \mathbb{R}\left(L_{\gamma}=\text { const }>0\right),
			\end{aligned}
		\end{equation}
		and if $\nabla\theta\in L^s(0,T; W^{1,2}(\Omega))$, $\boldsymbol{u}\in L^s(0,T; \boldsymbol W^{1,2}(\Omega))$ and $\varphi\in L^s(0,T; W^{1,2}(\Omega))$ where $s=\frac{8}{4-d}$, then the weak solution of problem $(\ref{eqvar1})-(\ref{eqvar3})$ is unique.  
	\end{enumerate}
	\label{Main_Result}
\end{theorem}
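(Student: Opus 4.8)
The plan is to prove the two parts independently: existence through a Schauder fixed-point construction assembled from the decoupled subproblems of Section \ref{Section3}, and uniqueness through an energy comparison of two hypothetical solutions that exploits the Lipschitz bounds \eqref{lipchtz} and the extra regularity.

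For existence, I would define a solution operator $\mathcal{T}$ on the closed, bounded, convex ball $B_\beta=\{\boldsymbol{z}\in\boldsymbol{\mathcal{U}}:\|\boldsymbol{z}\|_{\boldsymbol{\mathcal{U}}}\le\beta\}$. Given $\bar{\boldsymbol{v}}\in B_\beta$, the construction runs through three decoupled stages. First solve the linear elliptic potential problem $a_\varphi(\theta;\varphi,\chi)=(g,\chi)_{\Gamma_N}$, obtaining in particular $\nabla\varphi\in\mathbf{L}^4$ from the definition of $V_\varphi$; next solve the linear parabolic heat problem for $\theta$ with convection field $\bar{\boldsymbol{v}}$ and the two dissipation sources $e$ and $c_\varphi$ assembled from $\bar{\boldsymbol{v}}$ and $\varphi$; finally solve the Navier--Stokes subproblem with viscosity $\nu(\theta)$ and forcing $\boldsymbol{F}(\theta)$ to produce $\boldsymbol{v}=\mathcal{T}(\bar{\boldsymbol{v}})$. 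The role of \textbf{(A4)} and \textbf{(A5)} is precisely to make $\mathcal{T}$ a self-map of $B_\beta$: the bound \eqref{cstb} contributes the quadratic convective term $C_b T^{1/32}\|\bar{\boldsymbol{v}}\|_{\boldsymbol{\mathcal{U}}}^2$, the forcing and initial data contribute the terms $C_S C_E C_F C_d(\Omega,T)$ and $C_S\|\boldsymbol{v}_0\|_{\mathbf{V}_{\boldsymbol{v}}^{1/2,4}}$, and the viscosity variation is absorbed through the factor $1-C_S(\nu_2-\nu_1)>0$; the resulting quadratic inequality for $\|\boldsymbol{v}\|_{\boldsymbol{\mathcal{U}}}$ closes the estimate by $\beta$ exactly under \eqref{cond1}--\eqref{cond2}.

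The heart of the existence proof is then to verify Schauder's hypotheses. Compactness of $\mathcal{T}$ follows from an Aubin--Lions--Simon argument: the a priori bounds place $\boldsymbol{v}\in\boldsymbol{\mathcal{U}}$ with $\boldsymbol{v}_t\in L^4(0,T;\mathbf{V}_{\boldsymbol{v}}^{-1,4})$ and $\theta\in L^2(0,T;V_\theta^{1,2})$ with $\theta_t\in L^2(0,T;V_\theta^{-1,2})$, and these time-derivative controls upgrade weak convergence to strong convergence in intermediate spaces. Continuity of $\mathcal{T}$ is obtained by passing to the limit in each linear subproblem along a convergent sequence, the continuity of $\nu,\lambda,\gamma,\boldsymbol{F}$ from \textbf{(A1)} handling the coefficient dependence. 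I expect the main obstacle to be the integrability of the dissipation terms $\nu(\theta)\mathbb{D}(\boldsymbol{v}):\mathbb{D}(\boldsymbol{v})$ and $\lambda(\theta)\nabla\varphi\cdot\nabla\varphi$ on the right-hand side of the heat equation, which are a priori only $L^1$. The resolution is structural rather than analytic: the spaces are engineered so that $\mathbb{D}(\boldsymbol{v})\in L^4(0,T;\mathbf{L}^4)$ (from $\boldsymbol{\mathcal{U}}$) and $\nabla\varphi\in\mathbf{L}^4$ (from $V_\varphi$), whence both products lie in $L^2(\Omega_T)$ and the heat subproblem is well posed in the $L^2(V_\theta^{1,2})$--$L^2(V_\theta^{-1,2})$ duality, yielding in addition the time-continuity $\theta\in C(0,T;V_\theta^{1,2})$ claimed in the theorem. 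A fixed point $(\boldsymbol{v},\theta,\varphi)$ then satisfies \eqref{eqvar1}--\eqref{eqvar5} by construction.

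For uniqueness, let $(\boldsymbol{v}_1,\theta_1,\varphi_1)$ and $(\boldsymbol{v}_2,\theta_2,\varphi_2)$ be two weak solutions with $\nabla\theta_i,\boldsymbol{v}_i,\varphi_i\in L^s(0,T;W^{1,2})$, $s=\tfrac{8}{4-d}$, and set $\boldsymbol{V}=\boldsymbol{v}_1-\boldsymbol{v}_2$, $\Theta=\theta_1-\theta_2$, $\Phi=\varphi_1-\varphi_2$. Subtracting the identities \eqref{eqvar1}--\eqref{eqvar3} pairwise and testing the momentum equation with $\boldsymbol{V}$, the heat equation with $\Theta$, and the potential equation with $\Phi$, each nonlinear difference splits into a piece linear in the difference, absorbed by coercivity, and a piece carrying a coefficient difference such as $\nu(\theta_1)-\nu(\theta_2)$, converted by \eqref{lipchtz} into a factor $|\Theta|$. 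The main obstacle is the difference of the quadratic dissipation terms, e.g. $\nu(\theta_1)\mathbb{D}(\boldsymbol{v}_1):\mathbb{D}(\boldsymbol{v}_1)-\nu(\theta_2)\mathbb{D}(\boldsymbol{v}_2):\mathbb{D}(\boldsymbol{v}_2)$, which I would treat by inserting cross terms so that each summand contains either $\mathbb{D}(\boldsymbol{V})$ or $\Theta$, then estimating via H\"older's inequality and the embeddings \eqref{inject1} at the exponent dictated by $s=\tfrac{8}{4-d}$ (valid for $d=2,3$). The extra $L^s(W^{1,2})$ regularity is what makes the higher-order factors integrable in time, producing bounds of the form $C\,\omega(t)\big(\|\boldsymbol{V}\|^2+\|\Theta\|^2+\|\nabla\Phi\|^2\big)$ with $\omega\in L^1(0,T)$. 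Summing the three tested identities and applying Gronwall's lemma forces $\boldsymbol{V}=\Theta=\Phi=0$, which is the asserted uniqueness.
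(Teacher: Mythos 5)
Your existence architecture has a genuine circularity that the paper is specifically built to avoid. You fix a velocity $\bar{\boldsymbol{v}}\in B_\beta\subset\boldsymbol{\mathcal{U}}$ and propose to solve \emph{first} the potential problem, \emph{then} the heat problem, \emph{then} Navier--Stokes. But the potential equation has coefficient $\lambda(\theta)$, and at your first stage no temperature is available; likewise, your ``linear parabolic heat problem'' is not linear, since its diffusion coefficient is $\gamma(\theta)$ and the dissipation source carries $\nu(\theta)$ --- iterating on the velocity alone linearizes none of the temperature-dependent coefficients. The paper resolves this by making the \emph{temperature} the fixed-point variable: an arbitrary $\overline{\theta}\in L^{2}(0,T;L^{2})$ is frozen in \emph{all} coefficients, so \eqref{DecoupledNS} is solved by an inner Banach contraction on the ball \eqref{ball} (this inner loop, not the outer one, is where \textbf{(A4)}--\textbf{(A5)} and the bound \eqref{cstb} enter, via Theorem \ref{th-Stokes}), \eqref{DecoupledPhi} is solved by Lax--Milgram plus the Meyers-type regularity of Lemma \ref{Lem-regularity} to obtain $\nabla\varphi\in\mathbf{L}^{4}$ (Theorem \ref{varfi}), and \eqref{DecoupledH} is then a genuinely linear problem handled by Galerkin (Theorem \ref{wellposdecoupledH}); Schauder is applied to $\mathcal{S}:\overline{\theta}\mapsto\theta$ on a ball of $L^{2}(0,T;L^{2})$. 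Your compactness claim inherits the same defect: Aubin--Lions gives compactness of the temperature map through the space $Y\hookrightarrow\hookrightarrow L^{2}(0,T;L^{2})$, but it does \emph{not} give compactness of a velocity map into $\boldsymbol{\mathcal{U}}$, whose topology involves $L^{\infty}(0,T;\mathbf{V}_{\boldsymbol{v}}^{0,4})$ and $L^{4}(0,T;\mathbf{V}_{\boldsymbol{v}}^{1,4})$; a Schauder argument in $\boldsymbol{\mathcal{U}}$ would not close as you describe. A smaller overclaim: the duality $\theta\in L^{2}(0,T;V_{\theta}^{1,2})$, $\theta_{t}\in L^{2}(0,T;V_{\theta}^{-1,2})$ yields $\theta\in C(0,T;L^{2})$, not $\theta\in C(0,T;V_{\theta}^{1,2})$.

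Your uniqueness sketch is essentially the paper's argument (subtract, insert cross terms, use \eqref{lipchtz}, Gagliardo--Nirenberg, Young, Gronwall, with the $L^{s}$ regularity, $s=\tfrac{8}{4-d}$, making the weight integrable in time), except for the treatment of $\Phi=\varphi_{1}-\varphi_{2}$: the potential equation \eqref{uni3} is elliptic, with no time derivative, so $\|\nabla\Phi\|_{\mathbf{L}^{2}}^{2}$ cannot be carried inside a Gronwall differential inequality as you propose. The correct route, which the paper takes, is to use the elliptic estimate \eqref{estomega} to bound $\|\Phi\|_{W^{1,2}}$ by norms of $\Theta$, run Gronwall only on $\|\boldsymbol{V}\|_{\mathbf{L}^{2}}^{2}+\|\Theta\|_{L^{2}}^{2}$ to conclude $\boldsymbol{V}=\Theta=0$, and only then deduce $\Phi=0$ from \eqref{uni3}. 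With those repairs --- fixed point in the temperature, inner contraction for the velocity, and the elliptic elimination of $\Phi$ --- your outline matches the paper's proof.
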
   


\section{Well-posedness analysis}\label{Section3}
\textcolor{blue}{This section deals with the proof of   Theorem \ref{Main_Result}. 
	Note that a major difficulty for our model (1) is the strong coupling in the highest derivatives. Therefore, standard parabolic theory is not directly applicable to our system due to the dissipation terms. We point out that this model is strongly nonlinear and so no maximum principle applies. In our case, we used the point fixed strategy.} 
	
Let us briefly describe the rough idea of the proof. 
For given temperature, say $\overline{\theta}$, in the kinematic viscosity $\nu$ and the last term in the first line in $(\ref{DecoupledNS})$ i.e the right-hand side $\boldsymbol{F}$, we find $\boldsymbol{v}$, the solution of the decoupled Navier--Stokes equations $(\ref{DecoupledNS})$ via the Banach contraction principle. Further, we find $\varphi$, the solution of decoupled potential equation (\ref{DecoupledPhi}) using  Lax-Milgram's method with the electrical conductivity is also depend of $\overline{\theta}$. Now with $\boldsymbol{v}$ and  $\varphi$ in hand, we find $\theta$, the solution of the linearized heat equation with the second member is the some of two terms, the dissipative energy  and electric field \textcolor{blue}{using the approach of Faedo--Galerkin}. Finally, we show that the map $\overline{\theta} \rightarrow \theta$ is completely continuous and maps some ball independent of the choice $\overline{\theta}$ into itself. Hence, the existence of at least one solution follows from the Schauder's point fixe theorem. In Section \ref{section3.4}, the uniqueness of the solution is established under the assumptions of Lipschitz continuity of the data (see equation $(\ref{lipchtz})$) and higher regularity of $\theta$.


\subsection{Well-posedness of decoupled Navier--Stokes system and decoupled potential equation }\label{section3.1}
For an arbitrary fixed
$ \overline {\theta} \in L^{2}\left(0,T ; L^{2}\right)$,  we consider the decoupled Navier--Stokes  problem  
\begin{equation}
	\left\{\begin{array}{rclll}
		\boldsymbol{v}_{t}-\nabla \cdot(\nu(\overline {\theta}) \mathbb{D}(\boldsymbol{v}))+\nabla \cdot(\boldsymbol{v} \otimes \boldsymbol{v})+\nabla P & =&\boldsymbol{F}(\overline{\theta} ), & \hbox { in } \Omega_{T} \\
		\nabla \cdot \boldsymbol{v} & =&0 &\hbox{ in } \Omega_{T}, \\
		\boldsymbol{v} & =&0 &\hbox{ on }  \Sigma_{D},\\
		-P \boldsymbol{n}+\nu(\overline {\theta}) \mathbb{D}(\boldsymbol{v}) \boldsymbol{n} & =&\mathbf{0} & \hbox { on } \Sigma_{N },\\   
		\boldsymbol{v}(\boldsymbol{x}, 0) & =&\boldsymbol{v}_{0}(\boldsymbol{x}) &\,\hbox { in } \hbox { }\Omega. 
	\end{array}\right .
	\label{DecoupledNS}
\end{equation}
and the decoupled potential problem
\begin{equation} 
	\left\{	\begin{array}{rclll}
		- \operatorname{div}(\lambda(\overline {\theta}) \nabla \varphi) & = &0 & \hbox { in } &\Omega_{T}, \\
		(\lambda(\overline {\theta})\nabla \varphi) \cdot \boldsymbol{n} & = & g & \hbox { on } &\Sigma_{N}, \\
		\varphi & = & 0 & \hbox { on } &\Sigma_{D}. 
	\end{array}\right.
	\label{DecoupledPhi}
\end{equation}
\begin{remark}In~\cite{ref1} the authors proved  the existence and the uniqueness of the solution to the decoupled Navier--Stokes  problem \eqref{DecoupledNS} such that $\boldsymbol{v} \in \boldsymbol{\mathcal{U}}$ with $\boldsymbol{v}_{t} \in L^{4}\left(0,T ; \boldsymbol{V}_{\boldsymbol{v}}^{-1,4}\right)$ for $d=3$. For $d=2$,  the new paper~\cite{New-Benes2022} prescribed an additional condition of the viscosity on $\Gamma_{N}$ i.e the
	homogeneous Neumann boundary condition  and consider  the small data. 
	The authors  shown that the solution satisfies  $\boldsymbol{v} \in L^{\infty}\left(0,T ; \boldsymbol{V}_{\boldsymbol{v}}^{s-1,2}\right) \cap L^{2}\left(0,T ; \boldsymbol{V}_{\boldsymbol{v}}^{s,2}\right) $ with $\boldsymbol{v}_{t} \in L^{2}\left(0,T ; \boldsymbol{V}_{\boldsymbol{v}}^{s-2,2}\right)$ for $s>1$.
\end{remark}
\noindent We define the following nonlinear mapping
\begin{eqnarray}\label{T1}
	\begin{array}{lclll}
		\mathcal{S}_1: & L^{2}\left(0,T ; L^{2}\right) &\rightarrow &\boldsymbol{\mathcal{U}}\times L^{4}\left(0,T ; V_{\varphi}\right)\\
		& \overline{\theta}                           & \mapsto    & (\boldsymbol{v},\varphi) &
	\end{array}
\end{eqnarray}
where $\boldsymbol{v}$ is solution of \eqref{DecoupledNS} and $\varphi$ is solution of (\ref{DecoupledPhi}). The above mapping is well defined as we will show in the following (cf Theorem \ref{th-Stokes} and Theorem \ref{varfi} ). In order to prove $\boldsymbol{v}$ is solution of \eqref{DecoupledNS}, we need the following lemma.

\begin{lemma}[The decoupled Stokes problem]\label{Decoupled Stokes}
	Let $\boldsymbol{f} \in L^{4}\left(0,T ; \mathbf{V}_{\boldsymbol{v}}^{-1,4}\right)$ and $\boldsymbol{v}_{0} \in \mathbf{V}_{\boldsymbol{v}}^{1 / 2,4}$. Then there exists a unique function $\boldsymbol{v} \in \boldsymbol{\mathcal{U}}$ with $\boldsymbol{v}_{t} \in L^{4}\left(0,T ; \boldsymbol{V}_{\boldsymbol{v}}^{-1,4}\right)$ satisfying
	$$
	\left\langle\boldsymbol{v}_{t}, \boldsymbol{w}\right\rangle+\tilde{a}_{u}\left(\nu_{2} \boldsymbol{v}, \boldsymbol{w}\right)=\langle\boldsymbol{f}, \boldsymbol{w}\rangle,
	$$
	for all $\boldsymbol{w} \in \boldsymbol{V}_{\boldsymbol{v}}^{1,4 / 3}$ and almost every $t \in (0,T)$,
	$
	\boldsymbol{v}(\boldsymbol{.}, 0)=\boldsymbol{v}_{0}(\boldsymbol{.}) \quad \hbox { in } \Omega
	$. Moreover,  $\boldsymbol{v}$ satisfying the  following inequality  
	\begin{equation}\label{cnstStokes}
		{\|\boldsymbol{v}\|}_{\boldsymbol{\mathcal{U}}} \leq C_{S}\left(\|\boldsymbol{f}\|_{L^{4}\left(0,T ; \mathbf{V}_{\boldsymbol{v}}^{-1,4}\right)}+\left\|\boldsymbol{v}_{0}\right\|_{\mathbf{V}_{\boldsymbol{v}}^{1 / 2,4}}\right),
	\end{equation}
	where $C_{S}$ is a positive constant independent of $\boldsymbol{v}$, $\boldsymbol{f}$ and $\boldsymbol{v}_0$.
\end{lemma}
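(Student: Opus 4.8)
The plan is to treat the linear evolution problem of Lemma~\ref{Decoupled Stokes} as an abstract Cauchy problem in the solenoidal $L^4$ scale and to invoke maximal $L^4$-regularity of the constant-coefficient Stokes operator. Since the bilinear form $\tilde a_u(\nu_2\,\cdot\,,\cdot)$ carries the constant coefficient $\nu_2$ and the test space $\mathbf{V}_{\boldsymbol{v}}^{1,4/3}$ already consists of divergence-free fields, the pressure has been eliminated through the Helmholtz--Leray projection and one works entirely within the spaces $\mathbf{V}_{\boldsymbol{v}}^{m,p}$. First I would introduce the operator $A\colon \mathbf{V}_{\boldsymbol{v}}^{1,4}\to\mathbf{V}_{\boldsymbol{v}}^{-1,4}$ defined by $\langle A\boldsymbol{v},\boldsymbol{w}\rangle=\tilde a_u(\nu_2\boldsymbol{v},\boldsymbol{w})$ for $\boldsymbol{w}\in\mathbf{V}_{\boldsymbol{v}}^{1,4/3}$, so that the problem reads $\boldsymbol{v}_t+A\boldsymbol{v}=\boldsymbol{f}$, $\boldsymbol{v}(0)=\boldsymbol{v}_0$. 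On the $C^{1,1}$ domain with the mixed Dirichlet/do-nothing conditions, $A$ realizes the $\mathbf{V}_{\boldsymbol{v}}^{-1,4}$-extension of the Stokes operator; I would record that $\tilde a_u$ is coercive on $\mathbf{V}_{\boldsymbol{v}}^{1,2}$ by Korn's inequality together with the homogeneous Dirichlet condition on $\Gamma_D$, which places $0$ in the resolvent set and yields an exponentially decaying, bounded analytic semigroup $e^{-tA}$.

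Next I would solve the abstract problem through the variation-of-constants formula $\boldsymbol{v}(t)=e^{-tA}\boldsymbol{v}_0+\int_0^t e^{-(t-s)A}\boldsymbol{f}(s)\,ds$ and upgrade its regularity. The key input is maximal $L^4$-regularity for $A$ in the scale $\mathbf{V}_{\boldsymbol{v}}^{1,4}\hookrightarrow\mathbf{V}_{\boldsymbol{v}}^{-1,4}$: for $\boldsymbol{f}\in L^4(0,T;\mathbf{V}_{\boldsymbol{v}}^{-1,4})$ it delivers $\boldsymbol{v}\in L^4(0,T;\mathbf{V}_{\boldsymbol{v}}^{1,4})$ with $\boldsymbol{v}_t\in L^4(0,T;\mathbf{V}_{\boldsymbol{v}}^{-1,4})$. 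The correct space for $\boldsymbol{v}_0$ is the real-interpolation trace space of this maximal-regularity class; I would verify the identity $(\mathbf{V}_{\boldsymbol{v}}^{-1,4},\mathbf{V}_{\boldsymbol{v}}^{1,4})_{3/4,4}=\mathbf{V}_{\boldsymbol{v}}^{1/2,4}$ (the exponent check $-1+\tfrac{3}{4}\cdot2=\tfrac12$ matches assumption \textbf{(A2)}), so that $\boldsymbol{v}_0\in\mathbf{V}_{\boldsymbol{v}}^{1/2,4}$ is exactly the admissible initial datum. The trace theorem for the class $L^4(0,T;\mathbf{V}_{\boldsymbol{v}}^{1,4})\cap W^{1,4}(0,T;\mathbf{V}_{\boldsymbol{v}}^{-1,4})$ then gives $\boldsymbol{v}\in C([0,T];\mathbf{V}_{\boldsymbol{v}}^{1/2,4})\hookrightarrow L^\infty(0,T;\mathbf{V}_{\boldsymbol{v}}^{0,4})$, which together with the $L^4(0,T;\mathbf{V}_{\boldsymbol{v}}^{1,4})$ bound shows $\boldsymbol{v}\in\boldsymbol{\mathcal{U}}$. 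The maximal-regularity estimate is precisely \eqref{cnstStokes}, with $C_S$ depending only on $\Omega$, $\nu_2$ and the sectoriality constants; uniformity in $T$ follows from the exponential decay secured by coercivity. Uniqueness is then immediate from linearity: the difference of two solutions solves the homogeneous problem with $\boldsymbol{f}=0$ and $\boldsymbol{v}_0=0$, so \eqref{cnstStokes} forces it to vanish.

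The main obstacle is establishing maximal $L^4$-regularity (equivalently $\mathcal R$-sectoriality, or a bounded $H^\infty$-calculus) of the Stokes operator under the mixed boundary conditions $\Gamma=\Gamma_D\cup\Gamma_N$ with the do-nothing part $\Gamma_N$, on a domain of only $C^{1,1}$ regularity: the classical $L^q$ Stokes theory is cleanest for pure no-slip conditions on smooth domains, so one must appeal to or adapt the mixed-boundary $L^q$ resolvent estimates. A secondary technical point is the identification of the interpolation and trace spaces compatibly with the solenoidal constraint and with the support conditions defining $\mathcal{E}_{\boldsymbol{v}}$; once these are in place, the bound \eqref{cnstStokes} and the regularity $\boldsymbol{v}_t\in L^4(0,T;\mathbf{V}_{\boldsymbol{v}}^{-1,4})$ follow directly, so the remaining steps are routine. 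As an alternative one could run a divergence-free Galerkin scheme to obtain a baseline $L^2$ solution and then bootstrap to the $L^4$ scale via semigroup smoothing estimates, but the maximal-regularity route yields the quantitative constant $C_S$ most transparently.
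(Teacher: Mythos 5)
The paper contains no in-house proof of Lemma~\ref{Decoupled Stokes}: it defers entirely to \cite[Theorem 4.1 and Corollary 4.2]{ref1}. Your maximal-regularity outline is, in substance, the standard argument sitting behind that citation, and the architecture of the statement corroborates your reading: the exponent check $\left(\mathbf{V}_{\boldsymbol{v}}^{-1,4},\mathbf{V}_{\boldsymbol{v}}^{1,4}\right)_{3/4,4}=\mathbf{V}_{\boldsymbol{v}}^{1/2,4}$ is exactly the trace-space identity that explains why $\boldsymbol{v}_{0}$ is taken in $\mathbf{V}_{\boldsymbol{v}}^{1/2,4}$ in \textbf{(A2)}; the paper's duality convention ($\mathbf{V}_{\boldsymbol{v}}^{-1,4}$ defined as the dual of $\mathbf{V}_{\boldsymbol{v}}^{1,4/3}$) matches your operator $A$; the trace theorem giving $C([0,T];\mathbf{V}_{\boldsymbol{v}}^{1/2,4})\hookrightarrow L^{\infty}(0,T;\mathbf{V}_{\boldsymbol{v}}^{0,4})$, combined with the $L^{4}(0,T;\mathbf{V}_{\boldsymbol{v}}^{1,4})$ bound, correctly places $\boldsymbol{v}$ in $\boldsymbol{\mathcal{U}}$; and $T$-uniformity of $C_{S}$ via Korn's inequality and coercivity (legitimate since $\Gamma_{D}\neq\emptyset$) is the right mechanism. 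The uniqueness argument by linearity is immediate and fine.

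The single point at which your write-up is not self-contained --- and which you flag yourself --- is the maximal $L^{4}$-regularity (equivalently $\mathcal{R}$-sectoriality) of the constant-coefficient Stokes operator with mixed Dirichlet/do-nothing boundary conditions in the weak scale $\mathbf{V}_{\boldsymbol{v}}^{1,4}\hookrightarrow\mathbf{V}_{\boldsymbol{v}}^{-1,4}$ on a merely $C^{1,1}$ domain. Coercivity of the form only delivers the analytic semigroup on the $L^{2}$-based scale; extending sectoriality, semigroup consistency, maximal regularity, and the identification of the solenoidal interpolation spaces to the $\mathbf{V}_{\boldsymbol{v}}^{-1,4}$ scale (via interpolation--extrapolation scales or mixed-boundary $L^{q}$ resolvent estimates) is precisely the nontrivial content that the paper outsources to \cite{ref1}. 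So judged by the paper's own standard, your proposal is a correct reduction to the same external input the authors invoke; judged as a stand-alone proof, it has that one concretely identified gap, with everything built on top of it sound.
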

\begin{proof}
	We refer to~\cite[Theorem 4.1 and Corollary 4.2]{ref1} for the proof.
\end{proof}

The following theorem ensures the well-posedness of decoupled Navier--Stokes system $(\ref{DecoupledNS})$.
\begin{theorem}[Well-posedness of System $(\ref{DecoupledNS})$]\label{th-Stokes}Let 
	$\overline {\theta} \in L^{2}\left(0,T ; L^{2}\right)$ and $\boldsymbol{v}_{0} \in \mathbf{V}_{\boldsymbol{v}}^{1 / 2,4} $. 
	Then there exists a unique function $\boldsymbol{v} \in \boldsymbol{\mathcal{U}}$ with $\boldsymbol{v}_{t} \in L^{4}\left(0,T ; \mathbf{V}_{\boldsymbol{v}}^{-1,4}\right)$ such that
	\begin{equation}\label{VSDNS}
		\left\{ \begin{array}{rllll}
			\left\langle\boldsymbol{v}_{t}, \boldsymbol{w}\right\rangle+a_{u}(\overline {\theta} ; \boldsymbol{v}, \boldsymbol{w})+b(\boldsymbol{v}, \boldsymbol{v}, \boldsymbol{w}) & = & \langle\boldsymbol{F}(\overline {\theta}), \boldsymbol{w}\rangle, & \forall \boldsymbol{w} \in \mathbf{V}_{\boldsymbol{v}}^{1,4 / 3} \hbox{ and a.e } t \in (0,T), \\
			\boldsymbol{v}(\boldsymbol{x}, 0) & =& \boldsymbol{v}_{0}(\boldsymbol{x}),& \forall x \in \Omega .
		\end{array} \right. 
	\end{equation}
\end{theorem}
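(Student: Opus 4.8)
The plan is to recast the nonlinear problem (\ref{VSDNS}) as a fixed-point equation for a map $\Phi$ built from the linear Stokes solver of Lemma \ref{Decoupled Stokes}, and then invoke the Banach contraction principle. Since Lemma \ref{Decoupled Stokes} only handles the constant-viscosity operator $\tilde a_u(\nu_2\boldsymbol v,\cdot)$, I would first split $\nu(\overline\theta)=\nu_2-(\nu_2-\nu(\overline\theta))$ and move both the variable-viscosity defect and the convective term to the right-hand side. Concretely, for $\boldsymbol u\in\boldsymbol{\mathcal U}$ I let $\boldsymbol v=\Phi(\boldsymbol u)$ be the unique function furnished by Lemma \ref{Decoupled Stokes} solving
\[
\langle\boldsymbol v_t,\boldsymbol w\rangle+\tilde a_u(\nu_2\boldsymbol v,\boldsymbol w)=\langle\boldsymbol F(\overline\theta),\boldsymbol w\rangle+\int_\Omega(\nu_2-\nu(\overline\theta))\mathbb{D}(\boldsymbol u):\mathbb{D}(\boldsymbol w)\,\mathrm dx-b(\boldsymbol u,\boldsymbol u,\boldsymbol w),
\]
with initial value $\boldsymbol v_0$. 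A fixed point $\boldsymbol v=\Phi(\boldsymbol v)$ is then exactly a solution of (\ref{VSDNS}), because the defect integral recombines with $\tilde a_u(\nu_2\boldsymbol v,\cdot)$ into $a_u(\overline\theta;\boldsymbol v,\cdot)$, and $\boldsymbol v_t\in L^4(0,T;\mathbf V_{\boldsymbol v}^{-1,4})$ comes for free from Lemma \ref{Decoupled Stokes}.

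To run Banach on a closed ball $B_R=\{\boldsymbol u\in\boldsymbol{\mathcal U}:\|\boldsymbol u\|_{\boldsymbol{\mathcal U}}\le R\}$, I would first check the self-mapping $\Phi(B_R)\subset B_R$. Applying the Stokes estimate (\ref{cnstStokes}) and bounding the three sources separately: the forcing gives $\|\boldsymbol F(\overline\theta)\|_{L^4(0,T;\mathbf V_{\boldsymbol v}^{-1,4})}\le C_EC_FC_d(\Omega,T)$ through the embedding $\mathbf W^{1,4/3}\hookrightarrow\mathbf L^{4/3}$ and the pointwise bound (\ref{condF}); the defect is controlled by $(\nu_2-\nu_1)\|\boldsymbol u\|_{\boldsymbol{\mathcal U}}$ using Hölder with exponents $4$ and $4/3$ together with (\ref{cond-nu}); and the convective term is bounded by $C_bT^{1/32}\|\boldsymbol u\|_{\boldsymbol{\mathcal U}}^2$ from (\ref{cstb}). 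Summing yields the quadratic self-mapping condition $aR^2-bR+c\le 0$ with $a=C_SC_bT^{1/32}$, $b=1-C_S(\nu_2-\nu_1)$, and $c=C_S(C_EC_FC_d(\Omega,T)+\|\boldsymbol v_0\|_{\mathbf V_{\boldsymbol v}^{1/2,4}})$. Assumption (\ref{cond1}) makes $b>0$, while (\ref{cond2}) forces $c\le\beta^2/a$, so $b^2-4ac\ge b^2-4\beta^2>0$ by the choice $\beta<b/2$; hence the parabola has real roots and I take $R=R_-=(b-\sqrt{b^2-4ac})/(2a)$, the smaller one.

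Next I would verify the contraction on $B_{R_-}$. For $\boldsymbol u_1,\boldsymbol u_2\in B_{R_-}$ the difference $\Phi(\boldsymbol u_1)-\Phi(\boldsymbol u_2)$ solves the same Stokes problem with zero initial data (the forcing $\boldsymbol F(\overline\theta)$ and $\boldsymbol v_0$ cancel), so (\ref{cnstStokes}) applies to the difference of sources. The defect difference again costs $(\nu_2-\nu_1)\|\boldsymbol u_1-\boldsymbol u_2\|_{\boldsymbol{\mathcal U}}$, and the bilinearity of $b$ together with (\ref{cstb}) gives $\|b(\boldsymbol u_1,\boldsymbol u_1,\cdot)-b(\boldsymbol u_2,\boldsymbol u_2,\cdot)\|_{L^4(0,T;\mathbf V_{\boldsymbol v}^{-1,4})}\le C_bT^{1/32}(\|\boldsymbol u_1\|_{\boldsymbol{\mathcal U}}+\|\boldsymbol u_2\|_{\boldsymbol{\mathcal U}})\|\boldsymbol u_1-\boldsymbol u_2\|_{\boldsymbol{\mathcal U}}\le 2C_bT^{1/32}R_-\|\boldsymbol u_1-\boldsymbol u_2\|_{\boldsymbol{\mathcal U}}$. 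The resulting Lipschitz constant is $\kappa=C_S(\nu_2-\nu_1)+2aR_-=(1-b)+(b-\sqrt{b^2-4ac})=1-\sqrt{b^2-4ac}\in[0,1)$, precisely because $R_-$ sits strictly below the vertex $b/(2a)$ of the parabola. Banach's fixed-point theorem then delivers the unique $\boldsymbol v\in B_{R_-}$ with $\boldsymbol v=\Phi(\boldsymbol v)$.

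The main obstacle is reconciling the two competing requirements: self-mapping forces $R\ge R_-$, whereas contraction forces $R<b/(2a)$, so a single radius must serve both. This is exactly the role of the tailored smallness hypotheses (\ref{cond1}) and (\ref{cond2}) with the auxiliary parameter $\beta$—condition (\ref{cond2}) is what guarantees $R_-$ lies below the vertex. The other technical crux, controlling the convective term in $L^4(0,T;\mathbf V_{\boldsymbol v}^{-1,4})$, is entirely outsourced to the trilinear estimate (\ref{cstb}) of Lemma \ref{lem}. Banach already yields uniqueness inside $B_{R_-}$; to upgrade to uniqueness in the whole class, I would subtract two solutions $\boldsymbol v_1,\boldsymbol v_2$, test the difference identity with $\boldsymbol d=\boldsymbol v_1-\boldsymbol v_2$ (admissible since $\mathbf V_{\boldsymbol v}^{1,4}\hookrightarrow\mathbf V_{\boldsymbol v}^{1,4/3}$), use $\langle\boldsymbol d_t,\boldsymbol d\rangle=\tfrac12\tfrac{d}{dt}\|\boldsymbol d\|_{\mathbf L^2}^2$ and the structure of $b$, and close the estimate with Grönwall's inequality.
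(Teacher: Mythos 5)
Your proposal is correct and follows essentially the same route as the paper: the same defect decomposition $\nu(\overline\theta)=\nu_2-(\nu_2-\nu(\overline\theta))$ feeding the constant-viscosity Stokes solver of Lemma \ref{Decoupled Stokes}, the same three source estimates (forcing via $C_EC_FC_d(\Omega,T)$, defect via $(\nu_2-\nu_1)\|\boldsymbol u\|_{\boldsymbol{\mathcal U}}$, convection via (\ref{cstb})), Banach's fixed point under \textbf{(A4)}--\textbf{(A5)}, and the identical Gronwall argument for global uniqueness in $\boldsymbol{\mathcal U}$. The only deviation is cosmetic: you take the optimal ball radius $R_-$ (the smaller root of $aR^2-bR+c=0$, giving the clean contraction constant $1-\sqrt{b^2-4ac}$), whereas the paper works on the fixed ball of radius $\beta/(C_SC_bT^{1/32})$, which lies between $R_-$ and $R_+$ and yields the contraction constant $C_S(\nu_2-\nu_1)+2\beta<1$; both choices are valid under the same hypotheses.
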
 
\begin{proof}
	By H{\"o}lder inequality and the Sobolev embedding $(\ref{inject1})$, we infer 
	$$  
	\begin{aligned}
		|(\boldsymbol{F}(\overline {\theta}), \boldsymbol{w})|  
		& \leq\|\boldsymbol{F}(\overline {\theta})\|_{\mathbf{L}^{4}} \|\boldsymbol{w}\|_{\boldsymbol{L}^{4/3} }\\
		& \leq C_E\|\boldsymbol{F}(\overline {\theta})\|_{\mathbf{L}^{4}} \|\boldsymbol{w}\|_{\boldsymbol{W}^{1,4/3} },
	\end{aligned}
	$$
	for every $\boldsymbol{w} \in \mathbf{W}^{1,4/3}$. Then, 
	$$
	\begin{aligned}
		\|\boldsymbol{F}(\overline {\theta})\|_{\boldsymbol{V_u}^{-1,4}}	& \leq C_E \left(\int_{\Omega}(|\boldsymbol{F}(\overline {\theta})|_{E})^{4} ~d{\mathbf{x}}\right)^{1/4} \\
		& \leq C_E \left(\int_{\Omega}(d^{1/2}C_F)^{4} ~d{\mathbf{x}}\right)^{1/4} \\
		& \leq C_{E} C_{F}d^{1/2}\operatorname{meas}(\Omega)^{1/4},
	\end{aligned}
	$$ 
	where $|\cdot|_E$ denotes the Euclidean vector norm. Raising both sides and integrating over $(0,T)$ we get, 
	$$  
	\begin{aligned}
		\|\boldsymbol{F}(\overline{\theta})\|_{L^{4}(0,T ; \mathbf{V}_{\boldsymbol{v}}^{-1,4})} 
		& \leq C_{E} C_{F}C_d(\Omega, T).
	\end{aligned}
	$$ 
	Let  $\overline{\boldsymbol{v}} \in \boldsymbol{\mathcal{U}}$. By Lemma  \ref{lem} and Lemma \ref{Decoupled Stokes},   there exists the unique function $\boldsymbol{v} \in \boldsymbol{\mathcal{U}}$ with $\boldsymbol{v}_{t} \in L^{4}\left(0,T ; \mathbf{V}_{\boldsymbol{v}}^{-1,4}\right)$ such that
	$$ \left \{ 
	\begin{array}{rclll}
		\left\langle\boldsymbol{v}_{t}, \boldsymbol{w}\right\rangle+\tilde{a}_{u}\left(\nu_{2} \boldsymbol{v}, \boldsymbol{w}\right) & = & (\boldsymbol{F}(\overline {\theta}), \boldsymbol{w})+\tilde{a}_{u}\left(\nu_{2} \overline{\boldsymbol{v}}, \boldsymbol{w}\right)-a_{u}(\overline {\theta} , \overline{\boldsymbol{v}}, \boldsymbol{w})-b(\overline{\boldsymbol{v}},\overline{\boldsymbol{v}},\boldsymbol{w}),\\
		\boldsymbol{v}(\boldsymbol{x}, 0) & = & \boldsymbol{v}_{0}(\boldsymbol{x}) \qquad \qquad \qquad \qquad \qquad \qquad  \qquad \hbox{ in }   \Omega,
	\end{array} \right .
	$$
	for every $\boldsymbol{w} \in \mathbf{V}_{\boldsymbol{v}}^{1,4/3}$ and almost every $t \in (0,T)$ satisfying the estimate
	$$
	\begin{array}{ll}
		{\|\boldsymbol{v}\|}_{\boldsymbol{\mathcal{U}}} & \leq  C_{S}\left(  		\|\boldsymbol{F}(\overline {\theta})\|_{L^{4}\left(0,T ; \mathbf{V}_{\boldsymbol{v}}^{-1,4}\right)}+\|\tilde{a}_{u}\left(\nu_{2} \overline{\boldsymbol{v}}, \boldsymbol{.}\right)-a_{u}(\overline {\theta} , \overline{\boldsymbol{v}}, \boldsymbol{.})\|_{L^{4}\left(0,T; \mathbf{V}_{\boldsymbol{v}}^{-1,4}\right)}+\|b(\overline {\boldsymbol{v}}, \overline {\boldsymbol{v}}, \cdot)\|_{L^{4}\left(0,T; \mathbf{V}_{\boldsymbol{v}}^{-1,4}\right)}+\left\|\boldsymbol{v}_{0}\right\|_{\mathbf{V}_{\boldsymbol{v}}^{1 / 2,4}}  \right)\\ 
		& \leq	C_{S}\left(C_{E} C_{F}C_d(\Omega, T)+\left(\nu_{2}-\nu_{1}\right)\|\overline{\boldsymbol{v}}\|_{\boldsymbol{\mathcal{U}}}+ C_{b} T^{1 / 32}\|\overline{\boldsymbol{v}}\|_{\boldsymbol{\mathcal{U}}}^{2}+\left\|\boldsymbol{v}_{0}\right\|_{\mathbf{V}_{\boldsymbol{v}}^{1 / 2,4}}\right).
	\end{array}
	$$
	%
	Let us define the ball
	\begin{equation}\label{ball}
		B:=\left\{\overline{\boldsymbol{v}} \in \boldsymbol{\mathcal{U}},{\|\overline{\boldsymbol{v}}\|}_{\boldsymbol{\mathcal{U}}} \leq \frac{\beta}{C_{S} C_{b} T^{1 / 32}}\right\}.
	\end{equation}
	Under the assumptions \textbf{(A4)} and \textbf{(A5)}, and
	for every $\overline{\boldsymbol{v}} \in B$, we have 
	$$
	\begin{aligned}
		{\|\boldsymbol{v}\|}_{\boldsymbol{\mathcal{U}}} & \leq	C_{S}\left(C_{E} C_{F}C_d(\Omega, T)+\left\|\boldsymbol{v}_{0}\right\|_{\mathbf{V}_{\boldsymbol{v}}^{1 / 2,4}}+ C_{b} T^{1 / 32}{\|\overline{\boldsymbol{v}}\|}_{\boldsymbol{\mathcal{U}}}^{2}+ \left(\nu_{2}-\nu_{1}\right){\|\overline{\boldsymbol{v}}\|}_{\boldsymbol{\mathcal{U}}}\right)\\
		& \leq \frac{2 \beta^{2}}{C_{S} C_{b} T^{1 / 32}}+C_{S}\left(\nu_{2}-\nu_{1}\right) \frac{\beta}{C_{S} C_{b} T^{1 / 32}} \\
		& \leq \frac{\beta\left(2 \beta+C_{S}\left(\nu_{2}-\nu_{1}\right)\right)}{C_{S} C_{b} T^{1 / 32}} \\
		&<\frac{\beta}{C_{S} C_{b} T^{1 / 32}} .
	\end{aligned}
	$$
	Hence, the map $\mathcal{T}: \boldsymbol{\mathcal{U}} \rightarrow \boldsymbol{\mathcal{U}}$ with $\boldsymbol{\mathcal{T}}(\overline{\boldsymbol{v}})=\boldsymbol{v}$ maps $B$ into $B$. Further, by virtue of Lemma \ref{Decoupled Stokes}  and Lemma \ref{lem}, for every $\overline{\boldsymbol{v}}_{1}, \overline{\boldsymbol{v}}_{2} \in B$ we have
	$$
	\begin{aligned}
		{\left\|\boldsymbol{v}_{1}-\boldsymbol{v}_{2}\right\|}_{\boldsymbol{\mathcal{U}}} &={\left\|\mathcal{T}\left(\overline{\boldsymbol{v}}_{1}\right)-\mathcal{T}\left(\overline{\boldsymbol{v}}_{2}\right)\right\|}_{\boldsymbol{\mathcal{U}}} \\
		& \leq\left(C_{S}\left(\nu_{2}-\nu_{1}\right)+C_{S} C_{b} T^{1 / 32}\left({\left\|\overline{\boldsymbol{v}}_{1}\right\|}_{\boldsymbol{\mathcal{U}}}+{\left\|\overline{\boldsymbol{v}}_{2}\right\|}_{\boldsymbol{\mathcal{U}}}\right)\right)\left\|\overline{\boldsymbol{v}}_{1}-\overline{\boldsymbol{v}}_{2}\right\|_{\boldsymbol{\mathcal{U}}} \\
		& \leq\left(C_{S}\left(\nu_{2}-\nu_{1}\right)+2 \beta\right){\left\|\overline{\boldsymbol{v}}_{1}-\overline{\boldsymbol{v}}_{2}\right\|}_{\boldsymbol{\mathcal{U}}} .
	\end{aligned}
	$$
	From the assumptions \textbf{(A4)} and \textbf{(A5)}, it follows that $\left(C_{S}\left(\nu_{2}-\nu_{1}\right)+2 \beta\right)<1$.
	Thus,  the map $\mathcal{T}: \boldsymbol{\mathcal{U}} \rightarrow \boldsymbol{\mathcal{U}}$ with $\boldsymbol{\mathcal{T}}(\overline{\boldsymbol{v}})=\boldsymbol{v}$ is a contraction operator in the ball $B$. 
	Using the Banach fixed point theorem, we deduce the existence of at least one fixed point $\boldsymbol{v} \in \boldsymbol{\mathcal{U}}$, such that $\boldsymbol{\mathcal{T}}(\boldsymbol{v})=\boldsymbol{v}$, which is uniquely determined in the ball $B$. \\
	\noindent Let's show that the solution is globally unique in the space $\boldsymbol{\mathcal{U}}$. Let $\boldsymbol{v}_{1}$, $\boldsymbol{v}_{2} \in\boldsymbol{\mathcal{U}}$ two variational  solutions of the decoupled Navier--Stokes system (\ref{VSDNS}) and noted $\boldsymbol{v}=\boldsymbol{v}_{1}-\boldsymbol{v}_{2}$,  then $\boldsymbol{v}$ satisfied the following equation
	$$
	\left\langle\partial_t\boldsymbol{v}, \boldsymbol{w}\right\rangle+a_{\boldsymbol{v}}(\overline {\theta} ; \boldsymbol{v}, \boldsymbol{w})+b\left(\boldsymbol{v}, \boldsymbol{v}_{2}, \boldsymbol{w}\right)+b\left(\boldsymbol{v}_{1}, \boldsymbol{v}, \boldsymbol{w}\right)=0   
	$$
	holds for all $\boldsymbol{w} \in V^{1,4 / 3}$ and almost every $t \in (0,T)$ and $\boldsymbol{v}(\boldsymbol{x}, 0)=\mathbf{0}$. Hence, we consider $\boldsymbol{w}=\boldsymbol{v}$ then we have 
	$$
	\begin{aligned}
		\frac{1}{2} \frac{\mathrm{d}}{\mathrm{d} t}\|\boldsymbol{v}(t)\|_{\mathbf{V}_{\boldsymbol{v}}^{0,2}}^{2}+\nu_{1}\|\boldsymbol{v}(t)\|_{\mathbf{V}_{\boldsymbol{v}}^{1,2}}^{2} & \leq c\left(\left|b\left(\boldsymbol{v}_{1}(t), \boldsymbol{v}(t), \boldsymbol{v}(t)\right)\right|+\left|b\left(\boldsymbol{v}(t), \boldsymbol{v}_{2}(t), \boldsymbol{v}(t)\right)\right|\right) \\
		& \leq c_{1}\left\|\boldsymbol{v}_{1}(t)\right\|_{\mathbf{L}^{4}}\|\nabla \boldsymbol{v}(t)\|_{\mathbf{L}^{2}}\|\boldsymbol{v}(t)\|_{\mathbf{L}^{4}}+c_{2}\|\boldsymbol{v}(t)\|_{\mathbf{L}^{4}}^{2}\left\|\nabla \boldsymbol{v}_{2}(t)\right\|_{\mathbf{L}^{2}}.
	\end{aligned}
	$$
	By the interpolation inequality  
	$$
	\|\boldsymbol{v}(t)\|_{\mathbf{L}^{4}} \leq c\|\boldsymbol{v}(t)\|_{\mathbf{V}_{\boldsymbol{v}}^{1,2}}^{\zeta}\|\boldsymbol{v}(t)\|_{\mathbf{L}^{2}}^{1-\zeta}, \hbox{ where }\zeta=d/4, 
	$$
	we get	  
	$$
	\begin{array}{l}
		\displaystyle \frac{1}{2} \frac{\mathrm{d}}{\mathrm{d} t}\|\boldsymbol{v}(t) \|_{\mathbf{V}_{\boldsymbol{v}}^{0,2}}^{2}+\nu_{1}\|\boldsymbol{v}(t)\|_{\mathbf{V}_{\boldsymbol{v}}^{1,2}}^{2} 
		\displaystyle \leq c_{1}\left\|\boldsymbol{v}_{1}(t)\right\|_{\mathbf{L}^{4}}\|\boldsymbol{v}(t)\|_{\mathbf{V}_{\boldsymbol{v}}^{1,2}}^{1+\zeta}\|\boldsymbol{v}(t)\|_{\mathbf{L}^{2}}^{1-\zeta}+c_{2}\|\boldsymbol{v}(t)\|_{\mathbf{V}_{\boldsymbol{v}}^{1,2}}^{2\zeta}\|\boldsymbol{v}(t)\|_{\mathbf{L}^{2}}^{2(1-\zeta)}{\left\|\boldsymbol{v}_{2}(t)\right\|}_{\mathbf{W}^{1,2}}.
	\end{array}
	$$
	Applying Young's inequality,  we deduce
	\begin{equation}\label{b+bestimat}
		\frac{1}{2} \frac{\mathrm{d}}{\mathrm{d} t}\|\boldsymbol{v}(t)\|_{\mathbf{V}_{\boldsymbol{v}}^{0,2}}^{2}+\nu_{1}\|\boldsymbol{v}(t)\|_{\mathbf{V}_{\boldsymbol{v}}^{1,2}}^{2} \leq \delta\|\boldsymbol{v}(t)\|_{\mathbf{V}_{\boldsymbol{v}}^{1,2}}^{2}+c_{\delta}\|\boldsymbol{v}(t)\|_{\mathbf{L}^{2}}^{2}\left(\left\|\boldsymbol{v}_{1}(t)\right\|_{\mathbf{L}^{4}}^{\frac{2}{1-\zeta}}+\left\|\boldsymbol{v}_{2}(t)\right\|_{\mathbf{W}^{1,2}}^{\frac{1}{1-\zeta}}\right), 
	\end{equation}
	where $\delta>0$ can be chosen arbitrarily small and therefore
	$$
	\frac{\mathrm{d}}{\mathrm{d} t}\|\boldsymbol{v}(t)\|_{\mathbf{V}_{\boldsymbol{v}}^{0,2}}^{2} \leq 2 c_{\delta}\|\boldsymbol{v}(t)\|_{\mathbf{V}_{\boldsymbol{v}}^{0,2}}^{2}\left(\left\|\boldsymbol{v}_{1}(t)\right\|_{\mathbf{L}^{4}}^{\frac{2}{1-\zeta}}+\left\|\boldsymbol{v}_{2}(t)\right\|_{\mathbf{W}^{1,2}}^{\frac{1}{1-\zeta}}\right) .
	$$
	Finally, an application of Gronwall inequality and the fact that $\boldsymbol{v}(\boldsymbol{x}, 0)=\mathbf{0}$ lead to the uniqueness.  
\end{proof}


In order to ensure the well-posedness of the decoupled potential equation in space $V_{\varphi}$, 
we need the following regularity result of \cite{bulivcek2016existence}.  
\begin{lemma}\label{Lem-regularity} Let $\Omega \subset \mathbb{R}^d$ be a bounded domain with a smooth boundary. Assume that $f \in\boldsymbol{L}^2(\Omega)$ and $a \in C(\overline{\Omega})$ with $\min _{\overline{\Omega}} a>0$. Let $w$ be the weak solution of the following problem
	$$
	\begin{cases}
		&-\nabla \cdot(a \nabla w)=\nabla \cdot f \quad \mbox{ in } \Omega, \\
		&w=0 \qquad \qquad \qquad  \, \, \, \mbox{ on } \partial \Omega .
	\end{cases}
	$$
	Then for each $p>2$,  there exists a positive constant $c^*$ depending only on $d$, $\Omega$, $a$  and $p$ such that if $f \in\boldsymbol{L}^p(\Omega)$ then we have
	$$
	{\|\nabla w\|}_{\boldsymbol{L}^p} \leq c^*\left({\|f\|}_{\boldsymbol{L}^p}+{\|\nabla w\|}_{\boldsymbol{L}^2}\right)
	$$
\end{lemma}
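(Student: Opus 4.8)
The plan is to read this as a Calderón--Zygmund $L^p$-estimate for a divergence-form operator with \emph{continuous} leading coefficient. The continuity of $a$ is precisely what allows the full range $p>2$: for merely bounded measurable $a$ one only obtains higher integrability $p\in(2,2+\varepsilon)$ via the Meyers/Gehring reverse-Hölder argument, whereas a continuous (indeed $\mathrm{VMO}$) coefficient yields the estimate for every $1<p<\infty$. Accordingly I would not use Gehring's lemma but rather the classical \emph{coefficient-freezing} scheme: localize, compare with a constant-coefficient problem whose solution operator is an $L^p$-bounded singular integral, absorb the resulting perturbation using the uniform continuity of $a$, and patch the local estimates together.

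First I would record the constant-coefficient model estimate. For a constant $a_0\ge \min_{\overline\Omega}a>0$ and $-a_0\Delta u=\nabla\cdot F$, one has $\nabla u = a_0^{-1}\,\nabla(-\Delta)^{-1}\nabla\cdot F$, and the matrix operator $\nabla(-\Delta)^{-1}\nabla\cdot$ is a composition of Riesz transforms, hence bounded on $\boldsymbol L^p(\mathbb R^d)$ for all $1<p<\infty$ with a constant $C_p$ depending only on $d$ and $p$. Next, fix $p$ and let $\omega$ be the modulus of continuity of $a$ on the compact set $\overline\Omega$. On a ball $B_\delta(x_0)$ set $a_0=a(x_0)$ and write the equation as $-a_0\Delta w=\nabla\cdot f+\nabla\cdot\big((a-a_0)\nabla w\big)$. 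Localizing with a cutoff $\zeta$ supported in $B_\delta$ and applying the model estimate to $\zeta w$ gives, schematically,
\begin{equation*}
\|\nabla(\zeta w)\|_{\boldsymbol L^p}\le C_p\Big(\|f\|_{\boldsymbol L^p(B_\delta)}+\omega(\delta)\,\|\nabla(\zeta w)\|_{\boldsymbol L^p}+\|\,\nabla\zeta\otimes w\,\|_{\boldsymbol L^p}+\dots\Big),
\end{equation*}
where the dotted terms carry at most one derivative and are supported where $\nabla\zeta\neq0$. Choosing $\delta$ so small that $C_p\,\omega(\delta)\le \tfrac12$ lets me absorb the perturbation term, leaving an interior estimate of the form $\|\nabla w\|_{\boldsymbol L^p(B_{\delta/2})}\le C\big(\|f\|_{\boldsymbol L^p(B_\delta)}+\|w\|_{\boldsymbol L^p(B_\delta)}+\|\nabla w\|_{\boldsymbol L^p(B_\delta\setminus B_{\delta/2})}\big)$; the last, lower-order piece is handled by a standard iteration/hole-filling on concentric balls.

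For the boundary contribution I would use that $\partial\Omega$ is smooth: near each boundary point flatten $\partial\Omega$ by a smooth diffeomorphism, which transforms the problem into another divergence-form equation with continuous coefficients on a half-ball carrying the homogeneous Dirichlet condition on the flat face; an odd reflection across that face (or the boundary version of the model estimate) reduces it to the interior case, the diffeomorphism affecting only the constants through $\Omega$. Since $\overline\Omega$ is compact it is covered by finitely many such (interior and boundary) balls of radius $\sim\delta$; summing the local estimates produces $\|\nabla w\|_{\boldsymbol L^p(\Omega)}\le C\big(\|f\|_{\boldsymbol L^p(\Omega)}+\|w\|_{\boldsymbol L^p(\Omega)}\big)$. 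Finally, because $w=0$ on $\partial\Omega$, Poincaré's inequality together with the interpolation $\|w\|_{\boldsymbol L^p}\le \varepsilon\|\nabla w\|_{\boldsymbol L^p}+C_\varepsilon\|\nabla w\|_{\boldsymbol L^2}$ lets me absorb $\|w\|_{\boldsymbol L^p}$ into the left side, yielding the claimed bound with $c^*=c^*(d,\Omega,a,p)$.

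The main obstacle, and the reason the constant must depend on $p$, is the absorption step: the freezing radius $\delta$ is forced to satisfy $C_p\,\omega(\delta)\le\frac12$, so $\delta$ shrinks as $p$ (hence $C_p$) grows, and the number of covering balls --- hence the final constant --- blows up with $p$. A second, more technical point is that to legitimately absorb $\varepsilon\|\nabla w\|_{\boldsymbol L^p}$ one must know a priori that $\nabla w\in\boldsymbol L^p$; I would secure this qualitatively by first proving the estimate for smooth approximations $a_n\to a$, $f_n\to f$ (or via the difference-quotient/truncation method) and passing to the limit, so that the a priori bound is justified rather than assumed.
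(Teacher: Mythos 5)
Your argument is sound, but note that the paper does not prove this lemma at all: it is imported verbatim as a known regularity result, cited from the reference \cite{bulivcek2016existence}, and used as a black box in the proof of Theorem 3.2 (well-posedness of the decoupled potential equation). What you have written is essentially the standard proof that underlies such results: the constant-coefficient model estimate via the Riesz-transform operator $\nabla(-\Delta)^{-1}\nabla\cdot$, coefficient freezing on balls of radius $\delta$ with the perturbation $\nabla\cdot\bigl((a-a_0)\nabla w\bigr)$ absorbed once $C_p\,\omega(\delta)\le\tfrac12$, boundary flattening plus reflection, a finite covering, and the final absorption of $\|w\|_{\boldsymbol L^p}$ by Gagliardo--Nirenberg interpolation and Poincar\'e, which is exactly what produces the $\|\nabla w\|_{\boldsymbol L^2}$ term on the right-hand side of the stated estimate. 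Your framing remarks are also correct and worth keeping: continuity (indeed VMO) of $a$ is what opens the full range $p>2$, whereas bounded measurable coefficients only give Meyers' $p\in(2,2+\varepsilon)$; and the two delicate points you flag --- the $p$-dependence of the freezing radius propagating into the constant, and the need to justify the qualitative a priori information $\nabla w\in\boldsymbol L^p$ before absorbing (handled by approximating $a$ and $f$ by smooth data, for which classical $W^{1,p}$ theory applies, and passing to the limit) --- are precisely where a careless version of this argument would be circular. In short: the proposal is a correct, self-contained substitute for a proof the paper delegates to the literature; what the paper's citation buys is brevity, while your route makes explicit how the constant $c^*$ depends on $d$, $\Omega$, $p$, and on $a$ through $\min_{\overline\Omega}a$ and its modulus of continuity.
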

For the decoupled problem (\ref{DecoupledPhi}), we have the following result.  
\begin{theorem}[Well-posedness of System (\ref{DecoupledPhi})]\label{varfi}
	Let the function $\overline {\theta} \in L^{2}\left(0,T ; L^{2}(\Omega)\right)$ and $g\in L^4\left(0,T; W^{-1/2,2}(\Gamma)\right)$ are be given. Then there exists a unique  function $\varphi \in L^{4}\left(0,T ; V_{\varphi}^{}\right)$ solution of   (\ref{DecoupledPhi}), such that
	\begin{equation}\label{fi}
		a_{\varphi}(\overline {\theta}(t) ,\varphi(t),\chi )=\langle g(t), \chi\rangle,
	\end{equation}
	for every $\chi \in V_{\varphi}^{1,2}$ and almost every $t \in (0,T)$, and 
	\begin{equation}\label{normfi}
		\left\|\varphi\right\|_{L^4(0,T;V_{\varphi})} \leq c \left\|g\right\|_{L^4\left(0,T; W^{-1/2,2}(\Gamma)\right)}\hbox{,}
	\end{equation}
	for some constant $c>0$ independent of $\overline {\theta}$, $\varphi$ and $\chi$.
\end{theorem}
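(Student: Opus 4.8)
The plan is to solve \eqref{DecoupledPhi} pointwise in $t$ by the Lax--Milgram theorem, to upgrade the spatial integrability of $\nabla\varphi$ from $L^2$ to $L^4$ through Lemma \ref{Lem-regularity}, and to recover the time integrability at the very end. Fixing $t\in(0,T)$, I would work on the Hilbert space $V_\varphi^{1,2}$. By \eqref{cond-lambda} the form $(\psi,\chi)\mapsto a_\varphi(\overline\theta(t);\psi,\chi)$ is bounded, $|a_\varphi(\overline\theta(t);\psi,\chi)|\le\lambda_2\|\nabla\psi\|_{\mathbf L^2}\|\nabla\chi\|_{\mathbf L^2}$, and coercive, $a_\varphi(\overline\theta(t);\chi,\chi)\ge\lambda_1\|\nabla\chi\|_{\mathbf L^2}^2$, where coercivity in the full $W^{1,2}$-norm follows from the Poincaré inequality, which is available on $V_\varphi^{1,2}$ because its elements vanish on $\Gamma_D$. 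The right-hand side $\chi\mapsto\langle g(t),\chi\rangle$ is a bounded linear functional on $V_\varphi^{1,2}$ by the duality between $W^{-1/2,2}(\Gamma)$ and $W^{1/2,2}(\Gamma)$ together with the trace theorem, since $|\langle g(t),\chi\rangle|\le\|g(t)\|_{W^{-1/2,2}(\Gamma)}\|\chi\|_{W^{1/2,2}(\Gamma)}\le c\|g(t)\|_{W^{-1/2,2}(\Gamma)}\|\chi\|_{V_\varphi^{1,2}}$. Lax--Milgram then produces, for almost every $t$, a unique $\varphi(t)\in V_\varphi^{1,2}$ solving \eqref{fi}.

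Choosing $\chi=\varphi(t)$ in \eqref{fi} and combining coercivity with the last bound gives the basic energy estimate
\[
\lambda_1\|\nabla\varphi(t)\|_{\mathbf L^2}^2\le\langle g(t),\varphi(t)\rangle\le c\|g(t)\|_{W^{-1/2,2}(\Gamma)}\|\nabla\varphi(t)\|_{\mathbf L^2},
\]
so that $\|\nabla\varphi(t)\|_{\mathbf L^2}\le c\|g(t)\|_{W^{-1/2,2}(\Gamma)}$, where Poincaré has again been used to pass from the $W^{1,2}$-norm to the gradient norm.

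The heart of the argument is to promote this to an $L^4$ bound on $\nabla\varphi(t)$, which is exactly what membership in $V_\varphi$ demands. Testing \eqref{fi} against $\chi\in C_c^\infty(\Omega)$ shows that $\varphi(t)$ solves $-\nabla\cdot(\lambda(\overline\theta(t))\nabla\varphi(t))=0$ in $\Omega$, the data entering only through the Neumann condition on $\Gamma_N$. I would reduce this mixed boundary value problem to the homogeneous-Dirichlet, divergence-form setting of Lemma \ref{Lem-regularity} by lifting the datum $g(t)$, and then invoke that lemma with $a=\lambda(\overline\theta(t))$, which is uniformly elliptic and bounded by \eqref{cond-lambda}, to obtain $\|\nabla\varphi(t)\|_{\mathbf L^4}\le c^*(\|f\|_{\mathbf L^4}+\|\nabla\varphi(t)\|_{\mathbf L^2})$ for the interior source $f$ generated by the lifting. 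Combined with the preceding step, this yields the pointwise bound $\|\varphi(t)\|_{V_\varphi}\le c\|g(t)\|_{W^{-1/2,2}(\Gamma)}$.

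Finally, the map $t\mapsto\varphi(t)$ is measurable, by uniqueness of the Lax--Milgram solution and measurability of $t\mapsto(g(t),\overline\theta(t))$; raising the pointwise bound to the fourth power and integrating over $(0,T)$ gives $\varphi\in L^4(0,T;V_\varphi)$ and the estimate \eqref{normfi}, while uniqueness in $L^4(0,T;V_\varphi)$ is inherited from the pointwise uniqueness. I expect the regularity promotion to be the main obstacle: Lemma \ref{Lem-regularity} is stated for a pure Dirichlet problem with a continuous coefficient and an $L^4$ interior source, so the delicate points are the clean reduction of the mixed Neumann/Dirichlet problem to that framework, the regularity of $\lambda(\overline\theta(t))$ needed to reach the exponent $p=4$ in dimension $d=3$, and above all the reconciliation of the rough datum $g\in W^{-1/2,2}(\Gamma)$ with the $L^4$ integrability required of $\nabla\varphi$.
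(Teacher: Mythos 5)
Your proposal follows essentially the same route as the paper's own proof: Lax--Milgram on $V_\varphi^{1,2}$ for each fixed $t$, the energy estimate \eqref{normfi-W12}, a lifting $\phi$ of the Neumann datum so that $w=\varphi-\phi$ fits the homogeneous-Dirichlet divergence-form setting of Lemma \ref{Lem-regularity} with $a=\lambda(\overline\theta)$ and $f=\lambda(\overline\theta)\nabla\phi$, and finally integration in $t$ to reach \eqref{normfi}. The delicate points you flag at the end (the mixed boundary conditions versus the pure Dirichlet hypothesis of Lemma \ref{Lem-regularity}, the mere measurability of $\lambda(\overline\theta(t))$ versus the continuity assumed there, and the construction of a lifting with $\nabla\phi\in\mathbf L^4$ from $g\in W^{-1/2,2}(\Gamma)$) are real, but the paper's proof passes over them just as quickly, so your argument is faithful to it.
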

\begin{proof}
	The existence of solution to the problem  \eqref{DecoupledPhi} in $V_{\varphi}^{1,2}$ results from  the Lax-Milgram Theorem. The estimate of $\varphi$ in  $V_{\varphi}^{1,2}$ that is  
	\begin{equation}\label{normfi-W12}
		\left\|\varphi\right\|_{V_{\varphi}^{1,2}} \leq c \left\|g\right\|_{L^{2}(\Gamma)}\hbox{,}
	\end{equation}
	where $c>0$ is a constant independent of $\overline {\theta}$, $\varphi$ and $g$. 
	The regularity of the solution $\varphi$ follows from Lemma \ref{Lem-regularity}.  
	In fact, since $g\in L^4\left(0,T; W^{-1/2,2}(\Gamma)\right)$, we can set $\phi \in V_{\varphi}^{} $ such that $(\lambda(\overline {\theta})\nabla \phi) \cdot \boldsymbol{n} = g $, which is well defined according to the trace operator defined in \eqref{trace-N}. Moreover, let $a=\lambda(\overline{\theta})$ and  $\varphi \in V_{\varphi}^{1,2}$ the solution of  \eqref{DecoupledPhi}. Noted $w=\varphi-\phi \in V_{\varphi}^{1,2}$, then $w$ is the weak solution of the following problem:  
	$$ 
	\begin{aligned}
		-\nabla \cdot(a \nabla w)&=\nabla \cdot f \quad &\text { in } \Omega, \\
		w&=0 \quad  &\text { on } \Gamma .
	\end{aligned}
	$$
	whith $f=\lambda(\overline{\theta})\nabla\phi\in L^4(\Omega) $. Then we have
	$$
	\|\nabla \varphi\|_{L^4(\Omega)} \leq c^*\left(\|f\|_{L^4(\Omega)} +\|\nabla \varphi\|_{L^2(\Omega)}\right).
	$$
	According to  \eqref{normfi-W12}, we complete the proof.
\end{proof}
\subsection{Well-posedness of the decoupled heat equation}\label{section3.2}
For a fixed $\boldsymbol{v} \in \boldsymbol{\mathcal{U}}$ and $ \varphi \in L^4(0,T; V_{\varphi}) $, consider the linear heat equation
\begin{eqnarray}
	\left\{
	\begin{array}{lllll}
		\theta_{t} - \nabla \cdot(\gamma(\overline {\theta}) \nabla \theta) + \boldsymbol{v} \cdot \nabla \theta & = & \nu(\overline {\theta}) \mathbb{D}(\boldsymbol{v}): \mathbb{D}(\boldsymbol{v})+(\lambda(\overline {\theta})\nabla {\varphi})\cdot \nabla {\varphi} & \text { in }  & \Omega_{T}, \\
		(\gamma(\overline {\theta}) \nabla \theta) \cdot \boldsymbol{n} + \alpha \theta & = & \alpha \theta_l  & \text { on } & \Sigma,\\
		\theta(\boldsymbol{x}, 0) & = & \theta_{0}(\boldsymbol{x}) & \,\text { in } & \Omega.
	\end{array}\right . 
	\label{DecoupledH}
\end{eqnarray}
Concerning the well-posedness of the decoupled heat equation,  we have the following theorem
\begin{theorem}\label{wellposdecoupledH} 
	Let  $\overline{\theta} \in L^{2}\left(0,T ; L^{2}\right)$, $\boldsymbol{v} \in \boldsymbol{\mathcal{U}}$ and $\varphi \in V_{\varphi}^{1,2}$ be the solution of the problem $(\ref{DecoupledNS})$ and $(\ref{DecoupledPhi})$ respectively. Further, let $\theta_{0} \in L^{2}$ and $\theta_{l} \in L^{2}\left(0,T ; V_{\theta}^{1,2}\right)\cap L^{2}\left(0,T ; V_{\theta}^{-1,2}\right)$. Then there exists the uniquely determined function $\theta \in L^{2}\left(0,T ; V_{\theta}^{1,2}\right)$ 
	with $\theta_{t} \in L^{2}\left(0,T ; V_{\theta}^{-1,2}\right)$ such that
	\begin{equation}\label{equ18}
		\left\langle\theta_{t}, \psi\right\rangle+a_{\theta}(\overline {\theta} ; \theta, \psi)+d(\boldsymbol{v}, \theta, \psi)+\alpha(\theta, \psi)_\Gamma=e(\overline {\theta} ; \boldsymbol{v}, \boldsymbol{v}, \psi)+c_{\varphi}(\overline {\theta} ,{\varphi},\psi )+\alpha\left\langle\theta_{l}, \psi \right\rangle_{\Gamma},
	\end{equation}
	for every $\psi \in V_{\theta}^{1,2}$ and almost every $t \in (0,T)$, and
	\begin{equation}\label{equ19}
		\theta(\boldsymbol{x}, 0)=\theta_{0}(\boldsymbol{x}) \quad \text { in } \Omega\texttt{.}
	\end{equation}
\end{theorem}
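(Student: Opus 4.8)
The plan is to treat \eqref{equ18}--\eqref{equ19} as a \emph{linear} parabolic problem in the unknown $\theta$, since the coefficients $\gamma(\overline\theta)$, $\nu(\overline\theta)$, $\lambda(\overline\theta)$ and the fields $\boldsymbol{v},\varphi$ are all frozen data, and to build a solution by the Faedo--Galerkin method. First I would fix a Galerkin basis $\{w_i\}_{i\ge 1}$ of $V_\theta^{1,2}$ and seek $\theta_n(t)=\sum_{i=1}^n c_i^n(t)\,w_i$ solving the system obtained by testing \eqref{equ18} against $w_1,\dots,w_n$. Because every form appearing in \eqref{equ18} is linear in $\theta$ with coefficients that are measurable and bounded in $t$, this is a linear Carathéodory ODE system for the $c_i^n$; local solvability is immediate, and once the a priori bound below is established the solution is global on $(0,T)$.

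The core of the argument is the energy estimate obtained by choosing $\psi=\theta_n$. Using the coercivity \eqref{cond-gamma}, $a_\theta(\overline\theta;\theta_n,\theta_n)\ge\gamma_1\|\nabla\theta_n\|_{\mathbf{L}^2}^2$, and keeping the boundary term $\alpha\|\theta_n\|_{L^2(\Gamma)}^2\ge 0$ on the left, I would bound the remaining contributions one by one. The divergence-free structure of $\boldsymbol{v}$ does not by itself annihilate the convective term, because integration by parts leaves an uncontrolled flux $\tfrac12\int_{\Gamma_N}(\boldsymbol{v}\cdot\boldsymbol{n})\theta_n^2$, so I would estimate directly $|d(\boldsymbol{v},\theta_n,\theta_n)|\le\|\boldsymbol{v}\|_{\mathbf{L}^4}\|\nabla\theta_n\|_{\mathbf{L}^2}\|\theta_n\|_{L^4}$, combine this with the interpolation inequality $\|\theta_n\|_{L^4}\le c\,\|\theta_n\|_{V_\theta^{1,2}}^{d/4}\|\theta_n\|_{L^2}^{1-d/4}$ and Young's inequality to absorb the top-order factor into $\gamma_1\|\nabla\theta_n\|^2$; the surviving coefficient of $\|\theta_n\|_{L^2}^2$ is a power of $\|\boldsymbol{v}\|_{\mathbf{L}^4}$, hence bounded in time since $\boldsymbol{v}\in\boldsymbol{\mathcal{U}}\hookrightarrow L^\infty(0,T;\mathbf{V}_{\boldsymbol{v}}^{0,4})$. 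The two dissipative source terms are treated as genuine $L^2(0,T;L^2)$ data: since $\boldsymbol{v}\in L^4(0,T;\mathbf{V}_{\boldsymbol{v}}^{1,4})$ and $\varphi\in L^4(0,T;V_\varphi)$ yield $\mathbb{D}(\boldsymbol{v}),\nabla\varphi\in L^4(0,T;\mathbf{L}^4)$, the bounds \eqref{cond-nu}, \eqref{cond-lambda} give $\nu(\overline\theta)|\mathbb{D}(\boldsymbol{v})|^2+\lambda(\overline\theta)|\nabla\varphi|^2=:S\in L^2(0,T;L^2)$, whence $|e(\overline\theta;\boldsymbol{v},\boldsymbol{v},\theta_n)|+|c_\varphi(\overline\theta,\varphi,\theta_n)|\le\|S\|_{L^2}\|\theta_n\|_{L^2}$. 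The datum term is controlled by the trace inequality and Young, $\alpha|(\theta_l,\theta_n)_\Gamma|\le\tfrac{\alpha}{2}\|\theta_l\|_{L^2(\Gamma)}^2+\tfrac{\alpha}{2}\|\theta_n\|_{L^2(\Gamma)}^2$, the last term being absorbed on the left. Collecting everything and invoking Gronwall's lemma yields, uniformly in $n$, the bound $\|\theta_n\|_{L^\infty(0,T;L^2)}+\|\theta_n\|_{L^2(0,T;V_\theta^{1,2})}\le C$, after which a comparison in \eqref{equ18} gives $\|\theta_{n,t}\|_{L^2(0,T;V_\theta^{-1,2})}\le C$.

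With these uniform bounds I would extract a subsequence with $\theta_n\rightharpoonup\theta$ in $L^2(0,T;V_\theta^{1,2})$, $\theta_{n,t}\rightharpoonup\theta_t$ in $L^2(0,T;V_\theta^{-1,2})$, and, by the Aubin--Lions lemma, $\theta_n\to\theta$ strongly in $L^2(0,T;L^2)$. Since the problem is linear in $\theta$, passing to the limit in the projected equations is routine: each form is continuous under these convergences and the right-hand side is independent of $\theta$, so the limit satisfies \eqref{equ18}. The regularity $\theta\in L^2(0,T;V_\theta^{1,2})$ with $\theta_t\in L^2(0,T;V_\theta^{-1,2})$ gives $\theta\in C([0,T];L^2)$ by the Lions--Magenes embedding, which renders \eqref{equ19} meaningful, and $\theta(\cdot,0)=\theta_0$ is recovered by the usual integration-by-parts-in-time argument against test functions vanishing at $t=T$. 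Uniqueness is then immediate from linearity: the difference of two solutions solves the homogeneous problem with zero initial data, and the energy estimate above (now with $S=0$ and $\theta_l=0$) together with Gronwall forces it to vanish. I expect the main obstacle to lie in the a priori estimate rather than in the limit passage, specifically in verifying that the quadratic dissipation terms $\nu(\overline\theta)\mathbb{D}(\boldsymbol{v}):\mathbb{D}(\boldsymbol{v})$ and $\lambda(\overline\theta)\nabla\varphi\cdot\nabla\varphi$ genuinely belong to $L^2(0,T;L^2)$ — this is precisely where the $\mathbf{L}^4$-in-space gradient regularity encoded in $\boldsymbol{\mathcal{U}}$ and in $V_\varphi$ is indispensable — and in absorbing the convective term, for which the $L^\infty(0,T;\mathbf{L}^4)$ control of $\boldsymbol{v}$ together with the interpolation inequality (valid for both $d=2$ and $d=3$) is the decisive ingredient.
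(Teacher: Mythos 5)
Your proposal is correct and follows essentially the same route as the paper's own proof: a Faedo--Galerkin scheme for the frozen-coefficient linear problem, an energy estimate in which the convective term is absorbed via the Gagliardo--Nirenberg interpolation $\|\theta_n\|_{L^4}\le c\,\|\theta_n\|_{W^{1,2}}^{d/4}\|\theta_n\|_{L^2}^{1-d/4}$ and Young's inequality, Gronwall, weak compactness plus strong $L^2(0,T;L^2)$ convergence to pass to the limit, and uniqueness by linearity with Gronwall. The only cosmetic difference is that you treat the quadratic dissipation terms as a genuine $L^2(0,T;L^2)$ source, whereas the paper bundles them together with the boundary datum into a single functional $h\in L^2\left(0,T;V_{\theta}^{-1,2}\right)$ and estimates $\|h(t)\|_{V_\theta^{-1,2}}\|\theta_n(t)\|_{V_\theta^{1,2}}$ before absorbing; the two treatments are equivalent.
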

\begin{proof} We posed $\langle h(t),.\rangle=e(\overline {\theta} ; \boldsymbol{v}, \boldsymbol{v}, \cdot)+c_{\varphi}(\overline {\theta} ,{\varphi}, \cdot )+\alpha(\theta_l, \cdot)$. Since for $\overline {\theta} \in L^{2}\left(0,T ; L^{2}\right)$, $\boldsymbol{v} \in \boldsymbol{\mathcal{U}}$ and $\varphi\in L^{4}(0,T; V_{\varphi}^{})$ we have even $e(\overline {\theta} ; \boldsymbol{v}, \boldsymbol{v}, \cdot) \in L^{2}\left(0,T ; L^{2}\right)$ and $c_{\varphi}(\overline {\theta} ,{\varphi},. )\in  L^{2}\left(0,T ; W^{-1,2}\right)$, we conclude that $h\in L^{2}\left(0,T ; W^{-1,2}\right)$. Then,  the function $h$ is estimated by, 
	\begin{equation}\label{estem-h}
		\begin{aligned}
			||h(t)||_{V_{\theta}^{-1,2}}\leq & ||e(\overline {\theta} ; \boldsymbol{v}, \boldsymbol{v}, \cdot)||_{\boldsymbol{W}^{-1,2}}+||c_{\varphi}(\overline {\theta} ,{\varphi}, \cdot )||_{W^{-1,2}}+\alpha||\theta_l||_{V_{\theta}^{-1,2}}\\
			\leq & ||\nu(\overline {\theta}) \mathbb{D}(\boldsymbol{v}): \mathbb{D}(\boldsymbol{v})||_{L^2} + c{|| \varphi||}_{V_{\varphi}}^2+ \alpha||\theta_l||_{V_{\theta}^{-1,2}}\\
			\leq & c\left( ||\boldsymbol{v}||^2_{\boldsymbol{W}^{1,4}} +{|| \varphi ||}_{V_{\varphi}}^2+ ||\theta_l||_{V_{\theta}^{-1,2}}  \right).
		\end{aligned} 
	\end{equation} 
	Let $\left\{e_{n}\right\}_{n=1}^{\infty}$ be the orthogonal basis of the separable space $V_{\theta}^{1,2}$ such that
	$$
	V_{\theta}^{1,2}=\overline{\bigcup_{k=1}^{\infty} \mathcal{V}_{n}}^{W^{1,2}} \quad, \quad \mathcal{V}_{n}=\operatorname{span}\left\{e_{1}, e_{2}, \ldots, e_{n}\right\}.
	$$
	Define the Faedo--Galerkin approximation $\theta_{n} \in W^{1,2}\left(0,T ; \mathcal{V}_{k}\right)$
	\begin{equation}\label{equ21}
		\theta_{n}(t)=\sum_{i=1}^{k} \zeta_{i}(t) e_{i},
	\end{equation}
	where, $\zeta_{i}: I \rightarrow \mathbb{R}$ to be determined. Next, we consider the problem
	\begin{equation}\label{equ22} 
		\left\langle\frac{d}{dt}\theta_{n}(t), \psi\right\rangle+a_{\theta}(\overline {\theta}(t) ; \theta_{n}(t), \psi)+d\left(\boldsymbol{v}(t), \theta_{n}(t), \psi\right)+\alpha(\theta_{n}(t), \psi)_\Gamma=\langle h(t), \psi\rangle,
	\end{equation}
	for every $\psi \in \mathcal{V}_{n}$ and almost every $t \in (0,T)$, and 
	\begin{equation}\label{equ23}
		\theta_{n}(0)=\theta_{n}^0.
	\end{equation} 
	The	equations (\ref{equ22}) and (\ref{equ23}) represent the Cauchy problem for the system of linear ordinary differential equations with measurable coefficients, which ensures the local existence and uniqueness of a generalized solution $\zeta$ on the local time interval $(0,t_n)$ \cite{Kurzweil.1986}. Since $\theta_{n}(t) \in \mathcal{V}_{n},$ let us take $\psi=\theta_{n}(t)$ in (\ref{equ22}) to obtain
	$$  
	\frac{1}{2} \frac{d}{d t}\left\|\theta_{n}(t)\right\|_{L^{2}}^{2}+a_{\theta}(\overline {\theta}(t) ; \theta_{n}(t), \theta_{n}(t))+\alpha(\theta_{n}(t), \theta_{n}(t))_\Gamma=\langle {h}(t), \theta_{n}(t)\rangle-d\left(\boldsymbol{v}(t), \theta_{n}(t), \theta_{n}(t)\right)
	$$
	almost everywhere  $t\in(0,T)$. Hence, we arrive at the estimate
	$$
	\frac{1}{2} \frac{d}{d t}\left\|\theta_{n}(t)\right\|_{L^{2}}^{2}+\int_{\Omega} \gamma(\overline {\theta}(t))\left|\nabla \theta_{n}(t)\right|^{2} ~d{\mathbf{x}} +\alpha\left\|\theta_{n}(t)\right\|_{L^{2}(\Gamma)}^{2} 
	\leq \|h(t)\|_{V_{\theta}^{-1,2}}\left\|\theta_{n}(t)\right\|_{V_{\theta}^{1,2}}+\left|d\left(\boldsymbol{v}(t), \theta_{n}(t), \theta_{n}(t)\right)\right|.
	$$
	Using the Gagliardo--Nirenberg interpolation inequality (cf.~\cite[Theorem 5.8]{Adams2003})
	$$
	\begin{aligned}
		\|\theta_{n}(t)\|_{L^{4}(\Omega)} & \leq c\|\theta_{n}(t)\|_{W^{1,2}(\Omega)}^{\zeta}\|\theta_{n}(t)\|_{L^{2}(\Omega)}^{1-\zeta}, \text{ where }\zeta=d/4,
	\end{aligned}
	$$
	and Young's inequality with parameter $\delta$,
	$
	a b \leq \delta a^{p}+C(\delta) b^{q}$ with $a, b>0, \delta>0,1<p, q<\infty, 1 / p+1 / q=1 
	$
	and  $C(\delta)=(\delta p)^{-q / p} q^{-1}$, the last term can be estimated by
	\begin{equation}\label{equ24}
		\begin{aligned}
			\left|d\left(\boldsymbol{v}(t), \theta_{n}(t), \theta_{n}(t)\right)\right| & \leq\|\boldsymbol{v}(t)\|_{\mathbf{L}^{4}}\left\|\nabla \theta_{n}(t)\right\|_{\mathbf{L}^{2}}\left\|\theta_{n}(t)\right\|_{L^{4}} \\
			& \leq c\|\boldsymbol{v}(t)\|_{\mathbf{L}^{4}}\left\|\theta_{n}(t)\right\|_{W^{1,2}}^{1+\zeta}\left\|\theta_{n}(t)\right\|_{L^{2}}^{1-\zeta} \\
			& \leq \delta\left\|\theta_{n}(t)\right\|_{W^{1,2}}^{2}+C(\delta)\|\boldsymbol{v}(t)\|_{\mathbf{L}^{4}}^{2/(1-\zeta)}\left\|\theta_{n}(t)\right\|_{L^{2}}^{2} .
		\end{aligned}
	\end{equation} Choosing $\delta$ sufficiently small, we have
	\begin{equation}\label{equ25}
		\frac{d}{d t}\left\|\theta_{n}(t)\right\|_{L^{2}}^{2}+c_{1}\left\|\theta_{n}(t)\right\|_{V_{\theta}^{1,2}}^{2} \leq c_{2}\|h(t)\|_{V_{\theta}^{-1,2}}^{2}+c_{3}\|\boldsymbol{v}(t)\|_{\mathbf{L}^{4}}^{2/(1-\zeta)}\left\|\theta_{n}(t)\right\|_{L^{2}}^{2}.
	\end{equation}
	Using the Gronwall's inequality yields
	\begin{equation}\label{equ26}
		\left\|\theta_{n}(t)\right\|_{L^{2}}^{2} \leq\left[\left\|\theta_{n}^{0}\right\|_{L^{2}}^{2}+\int_{0}^{t} c_{2}\|h(s)\|_{V_{\theta}^{-1,2}}^{2} ~d{s}\right] \exp \left(\int_{0}^{t} c_{3}\|\boldsymbol{v}(s)\|_{\mathrm{L}^{4}}^{2/(1-\zeta)} ~d{s}\right)\quad\text{ for all $t \in (0,T)$.}
	\end{equation}  
	The estimates $(\ref{equ25})$ and $(\ref{equ26})$ imply that there exists some constants $C>0$ and $C'>0$ such that 
	\begin{eqnarray}
		\left\|\theta_{n}(t)\right\|_{L^{\infty}\left(0,T ; L^{2}\right)} \leq C, \label{equ27} 
		\\
		\left\|\theta_{n}(t)\right\|_{L^{2}\left(0,T ; V_{\theta}^{1,2}\right)} \leq C'.
		\label{equ28}   
	\end{eqnarray} 
	Now, from $(\ref{equ25})$ and using $(\ref{equ27})-(\ref{equ28})$ we deduce that $\left\{\left(\theta_{n}\right)_{t}\right\}_{n=1}^{\infty}$ is bounded in $L^{2}\left(0,T ; V_{\theta}^{-1,2}\right) $ and allows us to consider a subsequence, again denoted by $\left\{\theta_{n}(t)\right\}_{n=1}^{\infty},$ such that 
	\begin{eqnarray}
		\theta_{n} &\rightarrow& \theta \text { weakly in } L^{2}(0,T ; V_{\theta}^{1,2}), \label{equ29}
		\\
		\partial_t\theta_{n} &\rightarrow& \partial_t\theta \text { weakly in } L^{2}(0,T ; V_{\theta}^{-1,2}), \label{equ30}
		\\
		\theta_{n} &\rightarrow& \theta \text { strongly in } L^{2}(0,T ; L^{2}), \label{equ31}
		\\
		\theta_{n} &\rightarrow& \theta \text { almost everywhere in } \Omega_{T} \text {.} \label{equ32}
	\end{eqnarray}
	Now, we can immediately pass to the limit in $(\ref{equ22})$ and, by $(\ref{equ29})-(\ref{equ32})$, we obtain the solution $\theta \in L^{2}\left(0,T ; V_{\theta}^{1,2}\right) \cap W^{1,2}\left(0,T ; V_{\theta}^{-1,2}\right)$, which satisfies $(\ref{equ18})-(\ref{equ19})$. Consequently, we obtain
	\begin{equation}\label{equ33}
		\left\langle\partial_t\theta, \psi\right\rangle+a_{\theta}(\overline {\theta} ; \theta, \psi)+\alpha(\theta, \psi)_\Gamma=\langle h, \psi\rangle-d\left(\boldsymbol{v}, \theta, \psi\right),
	\end{equation}
	for every $\psi \in V_{\theta}^{1,2}$ and almost every $t \in (0,T)$ and the initial condition
	\begin{equation}\label{equ34}
		\theta(\boldsymbol{x}, 0)=\theta_{0}(\boldsymbol{x}) \quad \text { in } \Omega.
	\end{equation}
	Let $\psi=\theta(t)$ in $(\ref{equ33})$, then we get the estimate
	\begin{eqnarray}\label{equ35}
		\frac{1}{2} \frac{d}{d t}\left\|\theta(t)\right\|_{L^{2}}^{2}+\int_{\Omega} \gamma(\overline {\theta}(t))\left|\nabla \theta(t)\right|^{2} ~d{\mathbf{x}} +\alpha\left\|\theta(t)\right\|_{L^{2}(\Gamma)}^{2} 
		\leq\|h(t)\|_{V_{\theta}^{-1,2}}\left\|\theta(t)\right\|_{V_{\theta}^{1,2}}+\left|d\left(\boldsymbol{v}(t), \theta(t), \theta(t)\right)\right| .
	\end{eqnarray}
	Since the inequality $(\ref{equ24})$ is satisfied for $\theta \in L^{2}\left(0,T ; V_{\theta}^{1,2}\right)$, using the Young inequality and choosing $\delta$ sufficiently small   we get the following estimate  
	\begin{equation}\label{equ36}
		\frac{d}{d t}\left\|\theta(t)\right\|_{L^{2}}^{2}+c_{1}\left\|\theta(t)\right\|_{V_{\theta}^{1,2}}^{2} \leq c_{2}\|h(t)\|_{V_{\theta}^{-1,2}}^{2}+c_{3}\|\boldsymbol{v}(t)\|_{\mathbf{L}^{4}}^{2/(1-\zeta)}\left\|\theta(t)\right\|_{L^{2}}^{2}.
	\end{equation}
	Moreover, by the Gronwall's lemma, we find that
	\begin{equation}\label{equ37}
		\left\|\theta(t)\right\|_{L^{2}}^{2} \leq\left[\left\|\theta({0})\right\|_{L^{2}}^{2}+\int_{0}^{t} c_{2}\|h(s)\|_{V_{\theta}^{-1,2}}^{2} ~d{s}\right] \exp \left(\int_{0}^{t} c_{3}\|\boldsymbol{v}(s)\|_{\mathrm{L}^{4}}^{2/(1-\zeta)} ~d{s}\right)  \quad \text{for all }  t \in (0,T).
	\end{equation}
	Hence  
	\begin{equation}\label{equ38}
		\|\theta\|_{C\left(0,T ; L^{2}\right)}^{2} \leq c_{1} \exp \left(c_{2} T\|\boldsymbol{v}\|_{L^{\infty}\left(0,T ; \mathbf{L}^{4}\right)}^{2/(1-\zeta)}\right)\left[\left\|\theta_{0}\right\|_{L^{2}}^{2}+\|h\|_{L^2(0,T;V_{\theta}^{-1,2})}^{2}\right]\text{.}
	\end{equation}
	For the uniqueness, suppose there are two solutions $\theta_1$, $\theta_2\in V_{\theta}^{1,2}$ of $(\ref{equ18})-(\ref{equ19})$ on $(0,T)$ and denote $\theta=\theta_1-\theta_2$. Then,  
	\begin{equation}\label{unicite}
		\left\langle\theta_{t}, \psi\right\rangle+a_{\theta}(\overline {\theta} ; \theta, \psi)+d(\boldsymbol{v}, \theta, \psi)+\alpha(\theta, \psi)_\Gamma=0,
	\end{equation}
	for every $\psi \in V_{\theta}^{1,2}$ and almost every $t \in (0,T)$ and $\theta(x,0)=0$. Hence
	\begin{equation}
		\frac{d}{d t}\left\|\theta(t)\right\|_{L^{2}}^{2}+c_{1}\left\|\theta(t)\right\|_{V_{\theta}^{1,2}}^{2} \leq c_{2}\|\boldsymbol{v}(t)\|_{\mathbf{L}^{4}}^{2/(1-\zeta)}\left\|\theta(t)\right\|_{L^{2}}^{2}.
	\end{equation}
	Now, the uniqueness follows from Gronwall's inequality and the fact that $\theta(x, 0) = 0$. This completes the proof of the theorem. 
\end{proof}

\begin{remark}
	Note that from $(\ref{equ37})$ and $(\ref{estem-h})$ we have,
	\begin{equation}\label{normtheta}
		\left\|\theta(t)\right\|_{L^{2}}^{2} \leq\left[\left\|\theta({0})\right\|_{L^{2}}^{2}+\int_{0}^{t} c_{1}\left( ||\boldsymbol{v}||^2_{\boldsymbol{W}^{1,4}} + {|| \varphi ||}_{V_{\varphi}^{}}^2+||\theta_l||_{V_{\theta}^{-1,2}}\right)^{2} ~d{s}\right] \exp \left(\int_{0}^{t} c_{2}\|\boldsymbol{v}(s)\|_{\mathrm{L}^{4}}^{8} ~d{s}\right),
	\end{equation} 
	for all $t \in (0,T)$.
	Moreover, from the equations (\ref{ball}) and  (\ref{normfi}), $\theta$ is bounded in $C\left(0,T; L^{2}\right)$ independently of $\overline {\theta}$. 
\end{remark}  
From Theorem \ref{wellposdecoupledH}, we can then define the following nonlinear mapping 
\begin{eqnarray}\label{equ42}
	\begin{array}{lclll}
		\mathcal{S}_2: & \boldsymbol{\mathcal{U}} \times   L^{2}\left(0,T ; V_{\varphi}^{}\right) &\rightarrow & Y \\
		& \left( \boldsymbol{v} , \varphi \right)                            & \rightarrow    & \theta \quad \hbox{solution of (\ref{DecoupledH})},
	\end{array}
\end{eqnarray}
where, the space $Y$ is defined by	$
Y:=\left\{\phi ; \phi \in L^{2}\left(0,T ; W^{1,2}\right), \phi_{t} \in L^{2}\left(0,T ; V_{\theta}^{-1,2}\right)\right\}.
$ 

\subsection{Fixed point strategy}\label{section3.3}
In order to prove the first item of Theorem \ref{Main_Result}, we apply the  Schauder fixed point theorem and  the lemma of Aubin-Lions~\cite{Aubin1963}. So, we consider the Banach spaces
$W^{1,2}$, $W^{-1,2}$ and  $L^{2}$ satisfying the following embeddings
$
W^{1,2} \hookrightarrow \hookrightarrow L^{2}  \hookrightarrow W^{-1,2}
$.   Then, the space  
$
Y
$
is compactly embedded into $L^{2}\left(0,T ; L^{2}\right)$. Moreover, using the results of 
Theorem \ref{th-Stokes}, Theorem \ref{varfi} and Theorem \ref{wellposdecoupledH}, we can defined the mapping $\mathcal{S}$ by 
\begin{eqnarray}\label{equ43}
	\begin{array}{lclll}
		\mathcal{S}: & L^{2}\left(0,T ; L^{2}\right) & \rightarrow & L^{2}\left(0,T ; L^{2}\right) \\
		&    \overline {\theta}                 & \rightarrow    & \mathcal{S} ( \overline{\theta}  ) =  \mathcal{S}_2\circ  \mathcal{S}_1(\overline {\theta}):= \mathcal{S}_2 (\mathcal{S}_1 (\overline {\theta})).
	\end{array}
\end{eqnarray}
Applying the  interpolation theory and using some apriori estimates of $\boldsymbol{v}$, $\varphi$ and $\theta$, we show that $L^{2}\left(0,T ; L^{2}\right) \rightarrow Y$ is completely continuous. Hence,  using some operator theory results, we  get the compactness of the operator   $\mathcal{S}: L^{2}\left(0,T ; L^{2}\right) \rightarrow L^{2}\left(0,T ; L^{2}\right)$. 
Therefore, $\mathcal{S}$ is completely continuous if we prove its continuity.  
We show this in the following lemma.
\begin{lemma}\label{lem-contin} The mapping $\mathcal{S}$ is continuous  from $L^{2}\left(0,T ; L^{2}\right)$ into $L^{2}\left(0,T ; L^{2}\right)$.
\end{lemma}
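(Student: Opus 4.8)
The plan is to establish sequential continuity: I take an arbitrary sequence $\overline{\theta}_n \to \overline{\theta}$ in $L^{2}(0,T;L^{2})$ and show that $\mathcal{S}(\overline{\theta}_n)\to\mathcal{S}(\overline{\theta})$ in the same space, following the composition $\mathcal{S}=\mathcal{S}_2\circ\mathcal{S}_1$. Write $(\boldsymbol{v}_n,\varphi_n)=\mathcal{S}_1(\overline{\theta}_n)$ and $\theta_n=\mathcal{S}(\overline{\theta}_n)$, and similarly $(\boldsymbol{v},\varphi,\theta)$ for the limit datum $\overline{\theta}$. The preliminary and recurring ingredient is the convergence of the temperature-dependent coefficients: since $\overline{\theta}_n\to\overline{\theta}$ in $L^{2}(\Omega_T)$ it converges in measure, and because $\nu$, $\lambda$, $\gamma$, $\boldsymbol{F}$ are continuous and uniformly bounded by \textbf{(A1)}, the associated superposition (Nemytskii) operators are continuous on every $L^{p}(\Omega_T)$ with $p<\infty$. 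Hence $\nu(\overline{\theta}_n)\to\nu(\overline{\theta})$, $\lambda(\overline{\theta}_n)\to\lambda(\overline{\theta})$, $\gamma(\overline{\theta}_n)\to\gamma(\overline{\theta})$ strongly in each $L^{p}(\Omega_T)$ and $\boldsymbol{F}(\overline{\theta}_n)\to\boldsymbol{F}(\overline{\theta})$ in $L^{4}(0,T;\mathbf{V}_{\boldsymbol{v}}^{-1,4})$.

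First I would pass the velocity component, exploiting the contraction structure of Theorem \ref{th-Stokes}. Both $\boldsymbol{v}_n$ and $\boldsymbol{v}$ are the unique fixed points in the ball $B$ of \eqref{ball} of the maps $\mathcal{T}_n$ and $\mathcal{T}$ associated with $\overline{\theta}_n$ and $\overline{\theta}$, which share the \emph{same} contraction constant $k:=C_{S}(\nu_2-\nu_1)+2\beta<1$, since the quantities entering it come only from \textbf{(A1)}, \textbf{(A4)}, \textbf{(A5)}. Therefore
$$\|\boldsymbol{v}_n-\boldsymbol{v}\|_{\boldsymbol{\mathcal{U}}}=\|\mathcal{T}_n(\boldsymbol{v}_n)-\mathcal{T}(\boldsymbol{v})\|_{\boldsymbol{\mathcal{U}}}\le k\,\|\boldsymbol{v}_n-\boldsymbol{v}\|_{\boldsymbol{\mathcal{U}}}+\|\mathcal{T}_n(\boldsymbol{v})-\mathcal{T}(\boldsymbol{v})\|_{\boldsymbol{\mathcal{U}}},$$
so that $\|\boldsymbol{v}_n-\boldsymbol{v}\|_{\boldsymbol{\mathcal{U}}}\le(1-k)^{-1}\|\mathcal{T}_n(\boldsymbol{v})-\mathcal{T}(\boldsymbol{v})\|_{\boldsymbol{\mathcal{U}}}$. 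The difference $\mathcal{T}_n(\boldsymbol{v})-\mathcal{T}(\boldsymbol{v})$ solves the linear Stokes problem of Lemma \ref{Decoupled Stokes} with zero initial datum and right-hand side $(\boldsymbol{F}(\overline{\theta}_n)-\boldsymbol{F}(\overline{\theta}))-(a_u(\overline{\theta}_n;\boldsymbol{v},\cdot)-a_u(\overline{\theta};\boldsymbol{v},\cdot))$, the remaining terms cancelling. Using the stability estimate \eqref{cnstStokes}, the bound $|a_u(\overline{\theta}_n;\boldsymbol{v},\boldsymbol{w})-a_u(\overline{\theta};\boldsymbol{v},\boldsymbol{w})|\le\|(\nu(\overline{\theta}_n)-\nu(\overline{\theta}))\mathbb{D}(\boldsymbol{v})\|_{\mathbf{L}^4}\|\mathbb{D}(\boldsymbol{w})\|_{\mathbf{L}^{4/3}}$, and dominated convergence with majorant $2\nu_2|\mathbb{D}(\boldsymbol{v})|$, the right-hand side tends to $0$ in $L^{4}(0,T;\mathbf{V}_{\boldsymbol{v}}^{-1,4})$. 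This gives $\boldsymbol{v}_n\to\boldsymbol{v}$ strongly in $\boldsymbol{\mathcal{U}}$, hence $\mathbb{D}(\boldsymbol{v}_n)\to\mathbb{D}(\boldsymbol{v})$ in $L^{4}(0,T;\mathbf{L}^4)$.

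Next I would treat $\varphi_n$. Subtracting the weak formulation \eqref{fi} for $\overline{\theta}$ from that for $\overline{\theta}_n$, the difference $w_n:=\varphi_n-\varphi$ solves, after the same lifting reduction used in the proof of Theorem \ref{varfi}, an equation of the form $-\nabla\!\cdot(\lambda(\overline{\theta}_n)\nabla w_n)=\nabla\!\cdot f_n$ with $f_n:=(\lambda(\overline{\theta}_n)-\lambda(\overline{\theta}))\nabla\varphi$. Testing with $w_n$ and invoking coercivity \eqref{cond-lambda} yields $\nabla w_n\to0$ in $\mathbf{L}^2$; the higher-integrability Lemma \ref{Lem-regularity} then gives $\|\nabla w_n\|_{\mathbf{L}^4}\le c^{*}(\|f_n\|_{\mathbf{L}^4}+\|\nabla w_n\|_{\mathbf{L}^2})$, and since $\|f_n\|_{\mathbf{L}^4}\to0$ by dominated convergence (majorant $2\lambda_2|\nabla\varphi|$), I obtain $\nabla\varphi_n\to\nabla\varphi$ in $\mathbf{L}^4$ for a.e.\ $t$. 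The uniform bound \eqref{normfi} and a further dominated convergence in time upgrade this to $\varphi_n\to\varphi$ in $L^{4}(0,T;V_\varphi)$, whence $|\nabla\varphi_n|^2\to|\nabla\varphi|^2$ in $L^{2}(0,T;L^{2})$. With $\boldsymbol{v}_n\to\boldsymbol{v}$ in $\boldsymbol{\mathcal{U}}$ and $\varphi_n\to\varphi$ in $L^{4}(0,T;V_\varphi)$ now in hand, the heat source $h_n=e(\overline{\theta}_n;\boldsymbol{v}_n,\boldsymbol{v}_n,\cdot)+c_\varphi(\overline{\theta}_n,\varphi_n,\cdot)+\alpha(\theta_l,\cdot)$ converges to $h$ in $L^{2}(0,T;V_\theta^{-1,2})$. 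Writing \eqref{equ33} for $\theta_n$ and $\theta$, the difference $\Theta_n:=\theta_n-\theta$ solves a linear heat problem with zero initial datum, coefficient $\gamma(\overline{\theta}_n)$, and forcing made of $h_n-h$, the coefficient discrepancy $(\gamma(\overline{\theta}_n)-\gamma(\overline{\theta}))\nabla\theta$, and the convective discrepancy $(\boldsymbol{v}_n-\boldsymbol{v})\cdot\nabla\theta$. Testing with $\Theta_n$, estimating the convective term as in \eqref{equ24} and applying Gronwall's inequality as in \eqref{equ36}--\eqref{equ37}, all three forcing contributions vanish in the limit and I conclude $\Theta_n\to0$ in $C(0,T;L^{2})\cap L^{2}(0,T;V_\theta^{1,2})$, in particular $\mathcal{S}(\overline{\theta}_n)\to\mathcal{S}(\overline{\theta})$ in $L^{2}(0,T;L^{2})$.

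If one prefers to read the coefficient convergence only along an a.e.\ convergent subsequence, the limit $(\boldsymbol{v},\varphi,\theta)$ is in every case uniquely identified by Theorems \ref{th-Stokes}, \ref{varfi} and \ref{wellposdecoupledH}, so the subsequence principle upgrades the convergence to the whole sequence and yields continuity of $\mathcal{S}$. The main obstacle, and the reason weak compactness alone does not suffice, is the passage to the limit in the two quadratic dissipation terms $e(\overline{\theta}_n;\boldsymbol{v}_n,\boldsymbol{v}_n,\cdot)$ and $c_\varphi(\overline{\theta}_n,\varphi_n,\cdot)$: being quadratic in $\mathbb{D}(\boldsymbol{v}_n)$ and $\nabla\varphi_n$, they require strong convergence of these gradients in $\mathbf{L}^4$, not merely in $\mathbf{L}^2$. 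The contraction-stability estimate inherited from Theorem \ref{th-Stokes} and the higher-integrability Lemma \ref{Lem-regularity} are precisely the tools that deliver this strong convergence.
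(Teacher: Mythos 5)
Your proof is correct and shares the overall skeleton of the paper's argument --- subtract the decoupled systems solved with data $\overline{\theta}_n$ and $\overline{\theta}$, run energy estimates on the differences $\varphi_n-\varphi$ and $\theta_n-\theta$, and close with Gronwall plus dominated convergence along a.e.--convergent subsequences --- but two of your sub-arguments are genuinely different from, and more self-contained than, the paper's. First, the paper never proves the velocity convergence at all: it asserts $\boldsymbol{z}_n\to\boldsymbol{0}$ in $\boldsymbol{\mathcal{U}}$ ``(for the proof see \cite{ref1})''. Your uniform-contraction argument replaces that citation with a short direct proof, and it is sound: the contraction constant $C_{S}(\nu_2-\nu_1)+2\beta<1$ of Theorem \ref{th-Stokes} depends only on \textbf{(A1)}, \textbf{(A4)}, \textbf{(A5)}, hence is the same for every $\mathcal{T}_n$; and the perturbation $\mathcal{T}_n(\boldsymbol{v})-\mathcal{T}(\boldsymbol{v})$ does solve the constant-viscosity problem of Lemma \ref{Decoupled Stokes} with zero initial datum and the forcing you identify (the $\tilde a_u(\nu_2\boldsymbol{v},\cdot)$ and $b(\boldsymbol{v},\boldsymbol{v},\cdot)$ terms cancel), so the stability bound \eqref{cnstStokes} together with the Nemytskii continuity of $\nu$ and $\boldsymbol{F}$ finishes. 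Second, for the potential the paper stays at the $\mathbf{L}^2$ level (estimates \eqref{Normnablaomega}--\eqref{Normomega}, fed into $\beta_n$ in \eqref{omega}), whereas you upgrade to $\nabla\varphi_n\to\nabla\varphi$ in $L^4(0,T;\mathbf{L}^4)$ via Lemma \ref{Lem-regularity}. This buys a cleaner handling of the quadratic term: the paper's estimate of $\left|c_{\varphi}(\overline{\theta},\varphi,\sigma_n)-c_{\varphi}(\overline{\theta}_n,\varphi_n,\sigma_n)\right|$ pairs three $L^2$ norms, which is not a legitimate H\"older triple and implicitly presupposes exactly the $\mathbf{L}^4$ gradient integrability that you make explicit. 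The price is that you must apply Lemma \ref{Lem-regularity} with the $n$-dependent coefficient $a=\lambda(\overline{\theta}_n)$, which is only bounded measurable rather than continuous, and you need the constant $c^{*}$ uniform in $n$; this is a real subtlety, but it is exactly the same liberty the paper takes when invoking that lemma in Theorem \ref{varfi} to obtain the uniform bound \eqref{normfi}, on which the paper's own continuity proof also depends. Your endgame for the temperature --- packaging the quadratic sources as $h_n\to h$ in $L^{2}(0,T;V_{\theta}^{-1,2})$, testing the difference equation with $\Theta_n$, Gronwall with zero initial datum, and the subsequence-uniqueness principle to recover convergence of the full sequence --- is a tidy reorganization of the paper's term-by-term estimates \eqref{equ48}--\eqref{Normesigma} and is equivalent to them; the subsequence remark is a point the paper leaves implicit.
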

\begin{proof} 
	Let $\overline {\theta}$, $\overline {\theta}_{n} \in L^{2}\left(0,T ; L^{2}\right)$, $\varphi$, $\varphi_n\in V_{\varphi}^{1,4} $ and $\boldsymbol{v}, \boldsymbol{v}_{n} \in \boldsymbol{\mathcal{U}}$ with $\boldsymbol{v}_{t},\left(\boldsymbol{v}_{n}\right)_{t} \in L^{4}\left(0,T ; \mathbf{V}_{\boldsymbol{v}}^{-1,4}\right)$ such that
	\begin{eqnarray}
		a_{\varphi}(\overline {\theta} ,\varphi,\chi)&=&\langle g, \chi\rangle_{\Gamma_N}, \label{varphi1}
		\\
		a_{\varphi}(\overline {\theta}_n ,\varphi_n,\chi)&=&\langle g, \chi\rangle_{\Gamma_N}, \label{varphi2}
	\end{eqnarray}
	and 
	$$
	\begin{array}{rl}
		\left\langle \boldsymbol{v}_{t}, \boldsymbol{w}\right\rangle+a_{u}(\overline {\theta} ; \boldsymbol{v}, \boldsymbol{w})+b(\boldsymbol{v}, \boldsymbol{v}, \boldsymbol{w})&=(\boldsymbol{F}(\overline {\theta}), \boldsymbol{w}), \\
		\left\langle\left(\boldsymbol{v}_{n}\right)_{t}, \boldsymbol{w}\right\rangle+a_{u}(\overline {\theta}_{n} ; \boldsymbol{v}_{n}, \boldsymbol{w})+b\left(\boldsymbol{v}_{n}, \boldsymbol{v}_{n}, \boldsymbol{w}\right)&=(\boldsymbol{F}(\overline {\theta}_{n}), \boldsymbol{w}),
	\end{array}
	$$
	for every $\chi \in V_{\varphi}^{1,2}$, $\boldsymbol{w} \in \mathbf{V}_{\boldsymbol{v}}^{1,4 / 3}$ and almost every $t \in (0,T)$, and
	$$
	\boldsymbol{v}(\boldsymbol{x}, 0)=\boldsymbol{v}_{0}(\boldsymbol{x}), \quad \boldsymbol{v}_{n}(\boldsymbol{x}, 0)=\boldsymbol{v}_{0}(\boldsymbol{x}), \quad \text { in } \Omega.
	$$
	Now, we let the difference $\omega_n=\varphi-\varphi_n$. We substracte equations (\ref{varphi1}) and (\ref{varphi2}), to arrive at 
	\begin{equation}\label{eq-lem-ref1}
		a_{\varphi}(\overline {\theta} ,\omega_n,\chi)=\int_{\Omega}[\lambda(\overline {\theta}_n)-\lambda(\overline {\theta})] \nabla {\varphi}_n \nabla \chi ~d{\mathbf{x}}
	\end{equation}
	Next, we substitute $\chi=\omega_n$ in \eqref{eq-lem-ref1} to obtain
	\begin{equation}\label{Normnablaomega}
		\lambda_1||\nabla \omega_n||_{\boldsymbol{L}^2}\leq ||[\lambda(\overline {\theta}_n)-\lambda(\overline {\theta})] \nabla {\varphi}_n||_{\boldsymbol{L}^2}.
	\end{equation}
	According to the Poincaré inequality , there is exists a constant $c>0$ such that, 
	\begin{equation}\label{Normomega}
		||\omega_n||_{L^{2}}\leq c ||[\lambda(\overline {\theta}_n)-\lambda(\overline {\theta})] \nabla {\varphi}_n||_{\boldsymbol{L}^2}.
	\end{equation}
	In the following step, we let $\theta, \theta_{n} \in L^{2}\left(0,T ; V_{\theta}^{1,2}\right)$ with $\theta_{t},\partial_{t}\theta_{n} \in L^{2}\left(0,T ; V_{\theta}^{-1,2}\right)$ be solutions of the equations
	$$
	\begin{array}{rl}
		\left\langle\theta_{t}, \psi\right\rangle+a_{\theta}(\overline {\theta} ; \theta, \psi)+d(\boldsymbol{v}, \theta, \psi)+\alpha(\theta, \psi)_\Gamma &=e(\overline {\theta} ; \boldsymbol{v}, \boldsymbol{v}, \psi)+c_{\varphi}(\overline {\theta} ,{\varphi},\psi )+\alpha\langle\theta_{l},\psi\rangle_{\Gamma},
		\\
		\left\langle\partial_t\sigma_{n}, \psi\right\rangle+a_{\theta}(\overline {\theta}_n ; \theta_n, \psi)+d(\boldsymbol{v}_n, \theta_n, \psi)+\alpha(\theta_n, \psi)_\Gamma &=e(\overline {\theta}_n ; \boldsymbol{v}_n, \boldsymbol{v}_n, \psi)+c_{\varphi}(\overline {\theta}_n ,{\varphi_n},\psi ) +\alpha\langle\theta_{l}, \psi\rangle_{\Gamma},
	\end{array} 
	$$
	for every $\psi \in V_{\theta}^{1,2}$ and almost every $t \in (0,T)$, and
	$$
	\theta(x, 0)=\theta_{0}(x), \quad \theta_{n}(x, 0)=\theta_{0}^n(x) \quad \text { in } \Omega.
	$$
	Denote the differences $\sigma_{n}=\theta-\theta_{n}$ and $\boldsymbol{z}_{n}=\boldsymbol{v}-\boldsymbol{v}_{n}$. Then, for everv $\psi \in V_{\theta}^{1,2}$ and almost every $t \in (0,T)$ we have
	$$
	\begin{array}{rl}
		\displaystyle \left\langle\partial_t\sigma_{n}, \psi\right\rangle+a_{\theta}(\overline {\theta} ; \sigma_{n}, \psi) 
		\displaystyle& =\displaystyle -\alpha(\sigma_{n}, \psi)_\Gamma -\int_{\Omega}\left[\gamma(\overline {\theta})-\gamma(\overline {\theta}_{n})\right] \nabla \theta_{n} \cdot \nabla \psi ~d{\mathbf{x}}-d(\boldsymbol{z}_{n}, \theta, \psi)-d(\boldsymbol{v}_{n}, \sigma_{n}, \psi) \\   
		&\displaystyle \qquad  +c_{\varphi}(\overline {\theta} ,{\varphi},\psi )-c_{\varphi}(\overline {\theta}_n ,{\varphi}_n,\psi )+e(\overline {\theta} ; \boldsymbol{v}, \boldsymbol{v}, \psi)-e(\overline {\theta}_{n} ; \boldsymbol{v}_{n}, \boldsymbol{v}_{n}, \psi).
	\end{array}
	$$
	Set $\psi=\sigma_{n}$ to get the estimates for terms on the right-hand side in previous equation, 
	$$
	\begin{aligned}
		K_1-K_2 &=\left|\int_{\Omega} \left( \lambda(\overline {\theta}) \nabla \varphi \cdot \nabla \varphi-\lambda(\overline {\theta}_n) \nabla \varphi_n \cdot \nabla \varphi_n\right) \sigma_n     ~d{\mathbf{x}}\right|\\
		&\leq \left| \int_{\Omega}\left( \lambda(\overline {\theta}) \nabla \varphi \cdot \nabla \varphi-\lambda(\overline {\theta}_n) \nabla \varphi \cdot \nabla \varphi_n \right)\sigma_n ~d{\mathbf{x}}\right|+\left| \int_{\Omega}\left( \lambda(\overline {\theta}_n) \nabla \varphi \cdot \nabla \varphi_n-\lambda(\overline {\theta}_n) \nabla \varphi_n \cdot \nabla \varphi_n\right) \sigma_n     ~d{\mathbf{x}} \right|\\
		&\leq  ||\nabla \varphi||_{\boldsymbol{L^2}}||\lambda(\overline {\theta})\nabla \varphi-\lambda(\overline {\theta}_n)\nabla \varphi_n||_{\boldsymbol{L^2}}||\sigma_n||_{L^2}+||\nabla \varphi_n||_{\boldsymbol{L^2}}||\lambda(\overline {\theta}_n)\nabla \varphi-\lambda(\overline {\theta}_n)\nabla \varphi_n||_{\boldsymbol{L^2}}||\sigma_n||_{L^2}\\
		&\leq \delta||\sigma_n||^2_{W^{1,2}}+C(\delta)\left(||\nabla \varphi||^2_{\boldsymbol{L^2}}||\lambda(\overline {\theta})\nabla \varphi-\lambda(\overline {\theta}_n)\nabla \varphi_n||_{\boldsymbol{L^2}}^2+||\nabla \varphi_n||_{\boldsymbol{L^2}}^2||\lambda(\overline {\theta}_n)\nabla \varphi-\lambda(\overline {\theta}_n)\nabla \varphi_n||^2_{\boldsymbol{L^2}}\right)\\
		&\leq \delta||\sigma_n||^2_{W^{1,2}}+C(\delta)\left[ ||\nabla \varphi||^2_{\boldsymbol{L^2}}\left(\lambda_2||\nabla \omega_n||_{\boldsymbol{L^2}}+||[\lambda(\overline {\theta})-\lambda(\overline {\theta}_n)]\nabla \varphi_n||_{\boldsymbol{L^2}}\right)^2+||\nabla \varphi_n||^2_{\boldsymbol{L^2}}\lambda_2||\nabla \omega_n||^2_{\boldsymbol{L^2}}\right],
	\end{aligned}
	$$ where $K_1-K_2=\left|c_{\varphi}(\overline {\theta} ,{\varphi},\sigma_n )-c_{\varphi}(\overline {\theta}_n ,{\varphi}_n,\sigma_n )\right|$, we keep the estimates:  
	\begin{equation}\label{equ48}
		\left|\int_{\Omega}\left[\gamma(\overline {\theta})-\gamma(\overline {\theta}_{n})\right] \nabla \theta_{n} \cdot \nabla \sigma_{n} ~d{\mathbf{x}}\right| \leq \delta\left\|\sigma_{n}\right\|_{W^{1,2}}^{2}+C(\delta)\left\|\left(\gamma(\overline {\theta})-\gamma(\overline {\theta}_{n})\right) \nabla \theta_{n}\right\|_{\boldsymbol{L^2}}^{2},
	\end{equation}
	and
	\begin{equation}\label{equ49}
		\begin{aligned}
			\left|d\left(\boldsymbol{z}_n, \theta, \sigma_{n}\right)\right| & \leq\|\boldsymbol{z}_n\|_{\mathbf{L}^{4}}\left\|\nabla \theta\right\|_{\mathbf{L}^{2}}\left\|\sigma_{n}\right\|_{L^{4}} \\
			& \leq c\|\boldsymbol{z}_n\|_{\mathbf{L}^{4}}\left\|\theta\right\|_{W^{1,2}}^{}\left\|\sigma_{n}\right\|_{W^{1,2}}^{} \\
			& \leq \delta\left\|\sigma_{n}(t)\right\|_{W^{1,2}}^{2}+C(\delta)\|\boldsymbol{z}_n\|_{\mathbf{L}^{4}}^{2}\left\|\theta\right\|_{W^{1,2}}^{2} .
		\end{aligned}
	\end{equation}
	Furthermore
	\begin{equation}\label{equ50}
		\begin{aligned}
			\left|d(\boldsymbol{v}_{n}, \sigma_{n}, \sigma_{n})\right| & \leq \delta\left\|\sigma_{n}\right\|_{W^{1,2}}^{2}+C(\delta)\left\|\boldsymbol{v}_{n}\right\|_{\boldsymbol{L^4}}^{2/(1-\zeta)}\left\|\sigma_{n}\right\|_{L^{2}}^{2},
		\end{aligned}
	\end{equation} 
	\begin{equation}\label{equ51}
		\begin{aligned}
			\left|e(\overline {\theta} ; \boldsymbol{v}, \boldsymbol{v}, \sigma_{n})-e(\overline {\theta}_{n} ; \boldsymbol{v}_{n}, \boldsymbol{v}_{n}, \sigma_{n})\right|\leq \left|e(\overline {\theta} ; \boldsymbol{v}, \boldsymbol{v}, \sigma_{n})-e(\overline {\theta}_{n} ; \boldsymbol{v}, \boldsymbol{v}, \sigma_{n})\right|+\left|e(\overline {\theta}_{n} ; \boldsymbol{v}+\boldsymbol{v}_{n}, \boldsymbol{z}_{n}, \sigma_{n})\right|.
		\end{aligned}
	\end{equation} 
	The first  term in $(\ref{equ51})$, can be estimated by
	$$
	\left|e(\overline {\theta} ; \boldsymbol{v}, \boldsymbol{v}, \sigma_{n})-e(\overline {\theta}_{n} ; \boldsymbol{v}, \boldsymbol{v}, \sigma_{n})\right| \leq \delta\left\|\sigma_{n}\right\|_{W^{1,2}}^{2}+C(\delta)\left\|\left[\nu(\overline {\theta})-\nu(\overline {\theta}_{n})\right] \mathbb{D}(\boldsymbol{v}): \mathbb{D}(\boldsymbol{v})\right\|_{L^{2}}^{2}
	$$
	and
	$$  
	\begin{aligned}
		\left|e(\overline {\theta}_{n} ; \boldsymbol{v}+\boldsymbol{v}_{n}, \boldsymbol{z}_{n}, \sigma_{n})\right| & \leq c \nu_2 \left\|\boldsymbol{v}_{n}+\boldsymbol{v}\right\|_{W^{1,4}}\left\|\boldsymbol{z}_{n}\right\|_{\mathbf{w}^{1,2}}\left\|\sigma_{n}\right\|_{L^4}\\
		& \leq  c \nu_2 \left\|\boldsymbol{v}_{n}+\boldsymbol{v}\right\|_{W^{1,4}}\left\|\boldsymbol{z}_{n}\right\|_{\mathbf{w}^{1,2}}\left\|\sigma_{n}\right\|_{W^{1,2}}  \\
		& \leq \delta\left\|\sigma_{n}\right\|_{W^{1,2}}^{2}+C(\delta) \nu_{2}^2\left\|\boldsymbol{v}+\boldsymbol{v}_{n}\right\|_{\mathbf{w}^{1,4}}^{2}\left\|\boldsymbol{z}_{n}\right\|_{\mathbf{w}^{1,2}}^{2}.
	\end{aligned}
	$$
	This implies
	\begin{equation}\label{equ52}
		\begin{aligned}
			\left|e(\overline {\theta} ; \boldsymbol{v}, \boldsymbol{v}, \sigma_{n})-e(\overline {\theta}_{n} ; \boldsymbol{v}_{n}, \boldsymbol{v}_{n}, \sigma_{n})\right|&\leq \delta\left\|\sigma_{n}\right\|_{W^{1,2}}^{2}+C(\delta) \nu_{2}^2\left\|\boldsymbol{v}+\boldsymbol{v}_{n}\right\|_{\mathbf{w}^{1,4}}^{2}\left\|\boldsymbol{z}_{n}\right\|_{\mathbf{w}^{1,2}}^{2}\\ & +
			C(\delta)\left\|\left[\nu(\overline {\theta})-\nu(\overline {\theta}_{n})\right] \mathbb{D}(\boldsymbol{v}): \mathbb{D}(\boldsymbol{v})\right\|_{L^{2}}^{2}.
		\end{aligned}
	\end{equation}
	Choosing $\delta$ sufficiently small we conclude
	\begin{equation}\label{equ53}
		\frac{d}{d t}\left\|\sigma_{n}\right\|_{L^{2}}^{2} \leq \alpha_n(t)\left\|\sigma_{n}\right\|_{L^{2}}^{2}+\beta_n(t),
	\end{equation}
	where    
	\begin{align}\label{beta}	
		\begin{aligned}
			\alpha_n(t)=& C(\delta)\left\|\boldsymbol{v}_{n}\right\|_{\boldsymbol{L}^{4}}^{2/(1-\zeta)},
		\end{aligned}
	\end{align}
	and  
	\begin{equation}\label{omega}
		\begin{aligned}
			\beta_n(t)=& C(\delta)\left\|\left(\gamma(\overline {\theta})-\gamma(\overline {\theta}_{n})\right) \nabla \theta_{n}\right\|_{\boldsymbol{L^2}}^{2}+C(\delta)\left\|\boldsymbol{z}_{n}\right\|_{L^{4}}^{2}\|\theta\|_{W^{1,2}}^{2}+C(\delta)\left\|\left[\nu(\overline {\theta})-\nu(\overline {\theta}_{n})\right] \mathbb{D}(\boldsymbol{v}): \mathbb{D}(\boldsymbol{v})\right\|_{\boldsymbol{L^2}}^{2}\\
			&+C(\delta)\left[||\nabla \varphi||^2_{\boldsymbol{L^2}}\left(\lambda_2||\nabla \omega_n||_{\boldsymbol{L^2}}+||[\lambda(\overline {\theta})-\lambda(\overline {\theta}_n)]\nabla \varphi_n||_{\boldsymbol{L^2}}\right)^2+||\nabla \varphi_n||^2_{\boldsymbol{L^2}}\lambda_2||\nabla \omega_n||^2_{\boldsymbol{L^2}}\right]
			\\
			&+C(\delta) \nu_{2}^2\left\|\boldsymbol{v}+\boldsymbol{v}_{n}\right\|_{\mathbf{w}^{1,4}}^{2}\left\|\boldsymbol{z}_{n}\right\|_{\mathbf{w}^{1,2}}^{2}.
		\end{aligned}
	\end{equation}
	Applying the Gronwall's inequality to the estimate (\ref{equ53}) we arrive at
	\begin{equation}\label{Normesigma}
		\left\|\sigma_{n}(t)\right\|_{L^{2}}^{2} \leq \exp \left(\int_{0}^{t} \alpha_n(s) ~d{s}\right)\left[\sigma_{n}(0)+\int_{0}^{t} \beta_n(s) ~d{s}\right],
	\end{equation}
	for all $0 \leq t \leq T$. From the estimates
	$(\ref{equ36})-(\ref{equ37})$  we deduce that there exists some positive constant $C$, independent of $\theta_{n}$ and $\overline {\theta}_{n}$ such that
	$$\left\|\theta_{n}\right\|_{L^{2}\left(0,T ; W^{1,2}\right)} \leq C\text{.}$$
	
	Recall that $\boldsymbol{z}_{n} \rightarrow \boldsymbol{0}$ in $\boldsymbol{\mathcal{U}}$ for $\overline {\theta}_{n} \rightarrow \overline {\theta}$ in $L^{2}\left(0,T ; L^{2}\right)$ (for the proof see \cite{ref1}). Moreover, by (\ref{Normnablaomega}) and
	(\ref{Normomega}), we conclude $\nabla \omega_{n} \rightarrow \boldsymbol{0}$ and $\omega_{n} \rightarrow 0$ in $L^{2}(0,T;\boldsymbol{L^2})$ and $L^{2}(0,T;V_{\varphi}^{1,2})$  respectively, for $\overline {\theta}_{n} \rightarrow \overline {\theta}$ in $L^{2}\left(0,T ; L^{2}\right)$. Hence, all terms on the right-hand side of (\ref{omega}) tend to zero. Since $\sigma_{n}(x, 0)\rightarrow 0$, from (\ref{Normesigma}) we deduce that $\sigma_{n} \rightarrow 0$ in $C\left(0,T ; L^{2}\right),$ which obviously yields the convergence in $L^{2}\left(0,T ; L^{2}\right)$,  too. This achieves the proof.
\end{proof}

\subsection{Existence of the solution to the problem $(\ref{eqvar1})-(\ref{eqvar5})$} We conclude the proof by deriving some estimates of $\boldsymbol{v}$, $\varphi$ and $\theta$. Let $\overline {\theta} \in L^{2}\left(0,T ; L^{2}\right)$. By Theorem \ref{th-Stokes} there exists the unique solution $\boldsymbol{v} \in B$ of the problem $(\ref{DecoupledNS})$. Moreover, by Theorem \ref{varfi} there exists the unique solution $\varphi$ and it is bounded in $V_{\varphi}^{}$ (see Eq. (\ref{normfi})). Furthermore, let $\theta$ be the uniquely determined solution of the problem $(\ref{DecoupledH})$, which is ensured by Theorem \ref{wellposdecoupledH}.
Hence, by the a priori estimate $(\ref{normtheta})$, $\theta=\mathcal{S}(\overline {\theta})$ is bounded in $C\left(0,T ; L^{2}\right)$ independently of $\overline {\theta}$. Consequently, there exists a fixed ball $M \subset L^{2}\left(0,T ; L^{2}\right)$ defined by
\begin{equation}\label{ballB}
	M:=\left\{\theta \in L^{2}\left(0,T ; L^{2}\right) ,\|\theta\|_{L^{2}\left(0,T ; L^{2}\right)} \leq R\right\}
\end{equation}
($R > 0$ sufficiently large)  such that $\mathcal{S}(M) \subset M,$ where the operator $\mathcal{S}: L^{2}\left(0,T ; L^{2}\right) \rightarrow L^{2}\left(0,T ; L^{2}\right)$ is completely continuous, which is ensured by Lemma \ref{lem-contin}. The existence of the solution of the problem $(\ref{eqvar1})-(\ref{eqvar5})$ follows from the Schauder fixed point theorem.

\subsection{Proof of uniqueness}\label{section3.4} 
In this section, under additional assumptions on the problem data (see Theorems \ref{Main_Result} item 2 ), we prove the uniqueness of the solution. 

For this, suppose that there are two solutions $[\boldsymbol{v}_1,  \theta_1, \varphi_1]$ and $[\boldsymbol{v}_2, \theta_2, \varphi_2]$ of the problem $(\ref{eqvar1})-(\ref{eqvar3})$. Denote $\boldsymbol{v} = \boldsymbol{v}_1 - \boldsymbol{v}_2$,  $\theta= \theta_1 - \theta_2$ and
$\varphi=\varphi_1-\varphi_2$. Then $\boldsymbol{v}$, $\theta$ and  $\varphi$ satisfy the following equations
\begin{eqnarray}
	\left\langle\boldsymbol{v}_{t}, \boldsymbol{w}\right\rangle+a_{u}({\theta}_1 ; \boldsymbol{v}, \boldsymbol{w})+\int_{\Omega}\left[\nu({\theta}_1)-\nu\left({\theta}_{2}\right)\right] \mathbb{D}(\boldsymbol{v}_{2}): \mathbb{D}(\boldsymbol{w}) ~d{\mathbf{x}}+b\left(\boldsymbol{v}, \boldsymbol{v}_{2}, \boldsymbol{w}\right)+b\left(\boldsymbol{v}_1, \boldsymbol{v}, \boldsymbol{w}\right)&
		\nonumber\\
	-(\boldsymbol{F}(\theta_1)-\boldsymbol{F}(\theta_2), \boldsymbol{w})	&=0 \label{uni1} \\  
	\left\langle\theta_t, \psi\right\rangle+a_{\theta}({\theta}_1 ; \theta, \psi)+d(\boldsymbol{v}, \theta_1, \psi)+d(\boldsymbol{v}_{2}, \theta, \psi)+\alpha(\theta, \psi)_\Gamma+\int_{\Omega}\left[\gamma( {\theta}_1)-\gamma( {\theta}_{2})\right] \nabla \theta_{2} \cdot \nabla \psi ~d{\mathbf{x}} \nonumber\\
	+	c_{\varphi}({\theta}_1 ,{\varphi}_1,\psi )-c_{\varphi}( {\theta}_2 ,{\varphi}_2,\psi )+e( {\theta}_1 ; \boldsymbol{v}_1, \boldsymbol{v}_1, \psi)-e\left( {\theta}_{2} ; \boldsymbol{v}_{2}, \boldsymbol{v}_{2}, \psi\right) &=0\label{uni2} \\ 
	a_{\varphi}( {\theta}_1 ,\varphi,\chi)-\int_{\Omega}[\lambda( {\theta}_2)-\lambda( {\theta}_1)] \nabla {\theta}_2 \nabla \chi ~d{\mathbf{x}}&= 0 \label{uni3}
\end{eqnarray} 
for every $(\boldsymbol{w},\psi ,\chi) \in V^{1,4 / 3}\times V_{\theta}^{1,2}\times V_{\varphi}^{1,2}$and almost every $t \in (0,T)$, and
$$\begin{aligned}
	\boldsymbol{v}(\boldsymbol{x}, 0)&=\mathbf{0}& \text{ in } & \Omega,\\
	\theta(\boldsymbol{x}, 0)&=\mathbf{0}& \text{ in } & \Omega.
\end{aligned}
$$
Now, we use $\boldsymbol{w} = \boldsymbol{v}(t)$ as a test function in $(\ref{uni1})$ to obtain the
following inequality
\begin{equation}\label{testU}
	\begin{aligned}
		\frac{1}{2} \frac{\mathrm{d}}{\mathrm{d} t}\|\boldsymbol{v}(t)\|_{\mathbf{V}_{\boldsymbol{v}}^{0,2}}^{2}+\nu_{1}\|\boldsymbol{v}(t)\|_{\mathbf{V}_{\boldsymbol{v}}^{1,2}}^{2} & \leq \left|b\left(\boldsymbol{v}_{1}(t), \boldsymbol{v}(t), \boldsymbol{v}(t)\right)\right|+\left|b\left(\boldsymbol{v}(t), \boldsymbol{v}_{2}(t), \boldsymbol{v}(t)\right)\right|+|(\boldsymbol{F}(\theta_1)-\boldsymbol{F}(\theta_2), \boldsymbol{v}(t))|
		\\
		& \hspace{2  cm}
		+\left|\int_{\Omega}(\nu({\theta}_1)-\nu({\theta}_{2})) \mathbb{D}(\boldsymbol{v}_{2}): \mathbb{D}(\boldsymbol{v})~d{\mathbf{x}}\right|.
	\end{aligned} 
\end{equation}
To estimate term by term on the right-hand side of \((\ref{testU})\), we use the Gagliardo--Nirenberg inequality (cf.\cite[Theorem 5.8]{Adams2003})
$$
\|\boldsymbol{v}(t)\|_{\mathbf{L}^{4}} \leq c\|\boldsymbol{v}(t)\|_{\mathbf{V}_{\boldsymbol{v}}^{1,2}}^{\zeta}\|\boldsymbol{v}(t)\|_{\mathbf{L}^{2}}^{1-\zeta}, \hbox{ where }\zeta=d/4 ,
$$
the Young's inequality with parameter $\delta$ and the Lipschitz continuity of $\boldsymbol{F}$ and $\nu$.
\\
The first two terms can be estimate by   
\begin{equation}\label{estem-b} 
	\left|b\left(\boldsymbol{v}_{1}(t), \boldsymbol{v}(t), \boldsymbol{v}(t)\right)\right|+\left|b\left(\boldsymbol{v}(t), \boldsymbol{v}_{2}(t), \boldsymbol{v}(t)\right)\right| 
	\leq \delta\|\boldsymbol{v}(t)\|_{\mathbf{V}_{\boldsymbol{v}}^{1,2}}^{2}+c_{\delta}\|\boldsymbol{v}(t)\|_{\mathbf{L}^{2}}^{2}\left(\left\|\boldsymbol{v}_{1}(t)\right\|_{\mathbf{L}^{4}}^{\frac{2}{1-\zeta}}+\left\|\boldsymbol{v}_{2}(t)\right\|_{\mathbf{w}^{1,2}}^{\frac{1}{1-\zeta}}\right),
\end{equation}
where we have used the inequality \((\ref{b+bestimat})\). 
In addition, the third term can estimate using  Young inequality (immediately
after we apply H{\"o}lder's inequality) and  Lipschitz continuity of $\boldsymbol{F}$. The result is
\begin{equation}\label{estmF}
	\begin{aligned}
		|\left(\boldsymbol{F}(\theta_1)-\boldsymbol{F}(\theta_2), \boldsymbol{v}(t)\right)| 
		& \leq \|\boldsymbol{F}({\theta}_1)-\boldsymbol{F}\left({\theta}_{2}\right)\|_{\mathbf{L}^{2}} \| \boldsymbol{v}(t)\|_{\mathbf{L}^{2}}\\
		& \leq L_{\boldsymbol{F}} \|\theta\|_{L^2} \| \boldsymbol{v}(t)\|_{\mathbf{L}^{2}}\\
		& \leq 1/2L_{\boldsymbol{F}}\left( \|\theta\|_{L^2}^2+\| \boldsymbol{v}(t)\|_{\mathbf{L}^{2}}^2\right)\\
		& \leq c(\|\theta\|_{L^2}^2+\| \boldsymbol{v}(t)\|_{\mathbf{L}^{2}}^2).
	\end{aligned}
\end{equation}
Similarly to \eqref{estmF}, for the last term in (\ref{testU}) we get
\begin{equation}\label{estmD}
	\begin{aligned}
		\left|\int_{\Omega}\left(\nu({\theta}_1)-\nu({\theta}_{2})\right) \mathbb{D}(\boldsymbol{v}_{2}): \mathbb{D}(\boldsymbol{v})~d{\mathbf{x}}\right|
		& \leq \|(\nu({\theta}_1)-\nu({\theta}_{2}))\|_{L^4}
		\| \mathbb{D}(\boldsymbol{v}_{2})\|_{\mathbf{L}^{4}} \|\mathbb{D}(\boldsymbol{v})\|_{\mathbf{L}^{2}} \\
		& \leq L_{\nu}\|\theta\|_{L^4}
		\| \mathbb{D}\left(\boldsymbol{v}_{2}\right)\|_{\mathbf{L}^{4}} \|\boldsymbol{v}(t)\|_{\boldsymbol{W}{1,2}}\\
		& \leq c \|\theta\|_{L^2}^{1-\zeta}\|\theta\|_{W^{1,2}}^{\zeta} \| \mathbb{D}(\boldsymbol{v}_{2})\|_{\mathbf{L}^{4}} \|\boldsymbol{v}(t)\|_{\boldsymbol{W}{1,2}}
		\\
		& \leq \delta\left( \|\boldsymbol{v}(t)\|_{\boldsymbol{W}{1,2}}^2+\|\theta\|_{W^{1,2}}^{2}\right)+C_\delta\|\theta\|_{L^2}^2
		\| \mathbb{D}(\boldsymbol{v}_{2})\|_{\mathbf{L}^{4}}^{\frac{2}{1-\zeta}}.
	\end{aligned}
\end{equation}
Consequently, the estimates $(\ref{estem-b})-(\ref{estmD})$ imply 
\begin{equation}\label{estmU}
	\begin{aligned}
		\frac{\mathrm{d}}{\mathrm{d} t}\|\boldsymbol{v}(t)\|_{\mathbf{V}_{\boldsymbol{v}}^{0,2}}^{2}+c\|\boldsymbol{v}(t)\|_{\mathbf{V}_{\boldsymbol{v}}^{1,2}}^{2}  & \leq \delta\left( \|\boldsymbol{v}(t)\|_{\boldsymbol{W}{1,2}}^2+\|\theta\|_{W^{1,2}}^{2}\right)+
		C_{\delta}R_1(t)\left(\|\boldsymbol{v}(t)\|_{\mathbf{L}^{2}}^{2}+\|\theta\|_{L^2}^2\right),
	\end{aligned} 
\end{equation}
where $R_1(t)=\left(\left\|\boldsymbol{v}_{1}(t)\right\|_{\mathbf{L}^{4}}^{\frac{2}{1-\zeta}}+\left\|\boldsymbol{v}_{2}(t)\right\|_{\mathbf{w}^{1,2}}^{\frac{1}{1-\zeta}}+\| \mathbb{D}(\boldsymbol{v}_{2})\|_{\mathbf{L}^{4}}^2+
1\right)$.\\ 
Now, we substitute $\psi=\theta$ in (\ref{uni2}), to obtain the
following inequality   
$$
\begin{array}{l}
	\displaystyle \left\langle\theta_t, \theta\right\rangle+a_{\theta}\left({\theta}_1 ; \theta, \theta\right) +\alpha(\theta, \theta)_\Gamma \leq \left|\int_{\Omega}\left[\mu( {\theta}_1)-\mu( {\theta}_{2})\right] \nabla \theta_{2} \cdot \nabla \theta ~d{\mathbf{x}}\right|+\left|d\left(\boldsymbol{v}, \theta, \theta\right)\right| +\left|d\left(\boldsymbol{v}_{2}, \theta, \theta\right)\right|\\
	\hspace{5 cm} + \left|c_{\varphi}({\theta}_1 ,{\varphi}_1,\theta )\right| +\left|c_{\varphi}( {\theta}_2 ,{\varphi}_2,\theta )\right|+\left|e( {\theta}_1 ; \boldsymbol{v}_1, \boldsymbol{v}_1, \theta)-e\left( {\theta}_{2} ; \boldsymbol{v}_{2}, \boldsymbol{v}_{2}, \theta\right)\right|.
\end{array}
$$
To get the estimates for terms on the right-hand side in the previous equation we use the Gagliardo--Nirenberg inequality, H{\"o}lder's inequality, and Young inequality. Evidently, we have 
\begin{equation}\label{C1-C2}
	\begin{aligned}
		|c_{\varphi}({\theta}_1 ,{\varphi}_1,\theta )-c_{\varphi}({\theta}_2 ,{\varphi}_2,\theta )|	\leq & \delta\|\theta\|_{W^{1,2}}^2\\
		&+C_{\delta}\left(\|\nabla \varphi_1\|_{\boldsymbol{L}^4}^{\frac{2}{1-\zeta}}+\|\nabla \varphi_1\|_{\boldsymbol{L}^4}^{\frac{2}{1-\zeta}}\| \varphi\|_{{W}^{1,2}}^{\frac{2}{1-\zeta}}+\|\nabla \varphi_2\|_{\boldsymbol{L}^4}^{\frac{2}{1-\zeta}}\| \varphi\|_{{W}^{1,2}}^{\frac{2}{1-\zeta}}\right)\|\theta\|_{L^2}^2.
	\end{aligned}
\end{equation}
We keep the estimates:  
\begin{equation}
	\left|\int_{\Omega}\left[\gamma({\theta}_1)-\gamma({\theta}_{2})\right] \nabla \theta_{2} \cdot \nabla \theta ~d{\mathbf{x}} \right| \leq \delta\left\|\theta\right\|_{W^{1,2}}^{2}+C(\delta)\left\|\nabla\theta_2\right\|_{L^{4}}^{\frac{2}{1-\zeta}}\left\|\theta\right\|_{L^{2}}^{2},
\end{equation}
and
\begin{equation}
	\begin{aligned}
		\left|d\left(\boldsymbol{v}, \theta_1, \theta\right)\right| & \leq\|\boldsymbol{v}\|_{\mathbf{L}^{4}}\left\|\nabla \theta_1\right\|_{\mathbf{L}^{2}}\left\|\theta\right\|_{L^{4}} \\
		& \leq c\|\boldsymbol{v}\|_{\mathbf{W}^{1,2}}^{\zeta}\|\boldsymbol{v}\|_{\mathbf{L}^{2}}^{1-\zeta}\left\|\theta_1\right\|_{W^{1,2}}^{}\left\|\theta\right\|_{W^{1,2}}^{\zeta}\left\|\theta\right\|_{L^{2}}^{1-\zeta}\\
		& \leq \delta(\left\|\theta(t)\right\|_{W^{1,2}}^{}\|\boldsymbol{v}\|_{\mathbf{W}^{1,2}}^{})+C(\delta)\left\|\theta_1\right\|_{W^{1,2}}^{\frac{1}{1-\zeta}}\|\boldsymbol{v}\|_{\mathbf{L}^{2}}^{}\left\|\theta\right\|_{L^2}\\
		& \leq\delta/2\left(\left\|\theta(t)\right\|_{W^{1,2}}^{2}+\|\boldsymbol{v}\|_{\mathbf{W}^{1,2}}^{2}\right)+C(\delta)\left\|\theta_1\right\|_{W^{1,2}}^{\frac{1}{1-\zeta}}\left(\|\boldsymbol{v}\|_{\mathbf{L}^{2}}^{2}+\left\|\theta\right\|_{L^2}^2\right).
	\end{aligned}
\end{equation}
Moreover, we obtain
\begin{equation}
	\begin{aligned}
		\left|d\left(\boldsymbol{v}_{2}, \theta, \theta\right)\right| & \leq \delta\left\|\theta\right\|_{W^{1,2}}^{2}+C(\delta)\left\|\boldsymbol{v}_{2}\right\|_{\boldsymbol{L^4}}^{2/(1-\zeta)}\left\|\theta\right\|_{L^{2}}^{2}.
	\end{aligned}
\end{equation} 
The different of dissipatives terms can be estimated by 
$$
\begin{aligned}
	\left|e\left({\theta}_1 ; \boldsymbol{v}_1, \boldsymbol{v}_1, \theta_{}\right)-e\left({\theta}_{2} ; \boldsymbol{v}_{2}, \boldsymbol{v}_{2}, \theta_{}\right)\right|\leq \left|e\left({\theta}_1 ; \boldsymbol{v}_1, \boldsymbol{v}_1, \theta_{}\right)-e\left({\theta}_{2} ; \boldsymbol{v}_1, \boldsymbol{v}_1, \theta_{}\right)\right|+\left|e\left({\theta}_{2} ; \boldsymbol{v}_1+\boldsymbol{v}_{2}, \boldsymbol{v}, \theta_{}\right)\right|.
\end{aligned}
$$ 
The first  terms can be estimated by
\begin{equation}\label{E1}
	\begin{aligned}
		\left|e\left({\theta}_1 ; \boldsymbol{v}_1, \boldsymbol{v}_1, \theta_{}\right)-e\left({\theta}_{2} ; \boldsymbol{v}_1, \boldsymbol{v}_1, \theta_{}\right)\right|
		& \leq  \|\nu({\theta}_1)-\nu({\theta}_{2})\|_{L^4} \|\boldsymbol{v}_1 \|_{\mathbf{w}^{1,4}}^2\left\|\theta_{}\right\|_{L^4}^{}
		\\
		& \leq c L_{\nu}\|\boldsymbol{v}_1 \|_{\mathbf{w}^{1,4}}^2 \left\|\theta_{}\right\|_{L^2}^{1-\zeta}\left\|\theta_{}\right\|_{W^{1,2}}^{\zeta+1}
		\\
		& \leq \delta\left\|\theta_{}\right\|_{W^{1,2}}^{2}+C(\delta)\|\boldsymbol{v}_1 \|_{\mathbf{w}^{1,4}}^{\frac{4}{1+\zeta}}\|\theta\|_{L^2}^2,
	\end{aligned}
\end{equation}
and
\begin{equation}\label{E2}
	\begin{aligned}
		\left|e\left({\theta}_{2} ; \boldsymbol{v}_1+\boldsymbol{v}_{2}, \boldsymbol{v}_{}, \theta_{}\right)\right| & \leq c \nu_2 \left\|\boldsymbol{v}_{2}+\boldsymbol{v}_1\right\|_{W^{1,4}}\left\|\boldsymbol{v}_{}\right\|_{\mathbf{w}^{1,2}}\left\|\theta_{}\right\|_{L^4}\\
		& \leq c \nu_2 \left\|\boldsymbol{v}_{2}+\boldsymbol{v}_1\right\|_{W^{1,4}}\left\|\boldsymbol{v}_{}\right\|_{\mathbf{w}^{1,2}}\left\|\theta_{}\right\|_{L^2}^{1-\zeta}\left\|\theta_{}\right\|_{W^{1,2}}^{\zeta}\\
		& \leq \delta\left\|\theta_{}\right\|_{W^{1,2}}^{\frac{2\zeta}{1+\zeta}}\left\|\boldsymbol{v}_{}\right\|_{\mathbf{w}^{1,2}}^{\frac{2}{1+\zeta}}+C(\delta)\left\|\boldsymbol{v}_1+\boldsymbol{v}_{2}\right\|_{\mathbf{w}^{1,4}}^{\frac{2}{1-\zeta}}\left\|\theta_{}\right\|_{L^2}^2.
	\end{aligned}
\end{equation}
Collecting the previous results \eqref{C1-C2}-\eqref{E2}, we deduce that
\begin{equation}\label{estmtheta}
	\frac{d}{d t}\left\|\theta\right\|_{L^{2}}^{2}+c\left(\left\|\theta(t)\right\|_{W^{1,2}}^{2}+\|\boldsymbol{v}\|_{\mathbf{W}^{1,2}}^{2}\right) \leq \delta\left(\left\|\theta(t)\right\|_{W^{1,2}}^{2}+\|\boldsymbol{v}\|_{\mathbf{W}^{1,2}}^{2}\right)+C_{\delta} R_2(t)\left(\|\theta\|_{L^{2}}^{2}+\|\boldsymbol{v}\|_{\mathbf{L}^{2}}^{2}\right)
\end{equation}
where $R_2(t)=\left(\|\nabla \varphi_1\|_{\boldsymbol{L}^4}^{\frac{2}{1-\zeta}}+\|\nabla \varphi_1\|_{\boldsymbol{L}^4}^{\frac{2}{1-\zeta}}\| \varphi\|_{{W}^{1,2}}^{\frac{2}{1-\zeta}}+\|\nabla \varphi_2\|_{\boldsymbol{L}^4}^{\frac{2}{1-\zeta}}\| \varphi\|_{{W}^{1,2}}^{\frac{2}{1-\zeta}}+\left\|\theta_1\right\|_{W^{1,2}}^{\frac{1}{1-\zeta}}+\left\|\boldsymbol{v}_1+\boldsymbol{v}_{2}\right\|_{\mathbf{w}^{1,4}}^{\frac{2}{1-\zeta}}+\|\boldsymbol{v}_1 \|_{\mathbf{w}^{1,4}}^{\frac{4}{1+\zeta}}\right. \\
\left.\qquad \qquad\qquad
\qquad+\left\|\boldsymbol{v}_{2}\right\|_{\boldsymbol{L^4}}^{2/(1-\zeta)}+\left\|\nabla\theta_2\right\|_{L^{4}}^{\frac{2}{1-\zeta}}\right).$
\\
We make the sum of $(\ref{estmU})$ and $(\ref{estmtheta})$, and we use $\delta$ small to find 
\begin{equation}\label{estm-U-Theta}
	\begin{aligned}
		\frac{d}{d t}\left(\left\|\theta\right\|_{L^{2}}^{2}+\|\boldsymbol{v}\|_{\mathbf{L}^{2}}^{2} \right) & \leq C_{\delta}'\left(R_1(t)+R_2(t)\right)\left(\left\|\theta\right\|_{L^{2}}^{2}+\|\boldsymbol{v}\|_{\mathbf{L}^{2}}^{2} \right).
	\end{aligned} 
\end{equation}
Applying the Gronwall's inequality to $(\ref{estm-U-Theta})$ and the fact that $\boldsymbol{v}(x, 0)=\theta(x, 0)=0$,  we arrive at
$\theta=\boldsymbol{v}=0$.  

Now, we use substitute $\chi=\varphi$ in (\ref{uni3}) to get   
\begin{equation}\label{}
	c_1||\nabla \varphi||_{\boldsymbol{L}^2}^2\leq ||[\lambda( {\theta}_2)-\lambda( {\theta}_1)] \nabla {\varphi}_2||_{\boldsymbol{L}^2}||\nabla \varphi||_{\boldsymbol{L}^2}.
\end{equation}
Using the Lipschitz condition of $\lambda$ and according to the inequality of Poincaré and Young, there is a constant $c>0$ such that, 
\begin{equation}\label{}
	\begin{aligned}
		{\| \varphi\|}_{W^{1,2}} & \leq c\|[\lambda( {\theta}_2)-\lambda( {\theta}_1)] \nabla \varphi_2\|_{\boldsymbol{L}^2}^2\\
		& \leq c \|\theta\|_{L^4}^2\|\nabla \varphi_2\|_{\boldsymbol{L}^4}^2\\
		& \leq c \|\theta\|_{L^2}^{2(1-\zeta)}\|\theta\|_{W^{1,2}}^{2\zeta}\|\nabla \varphi_2\|_{\boldsymbol{L}^4}^2.
	\end{aligned}
	\label{estomega}
\end{equation}
Finally, since $\theta=0$, we conclude that $\varphi=0$.

\section{Numerical experiments}\label{Section5}  
\textcolor{blue}{In this section, we aim to validate the proposed model. We start by presenting the computational analysis to solve numerically the proposed model. Subsequently, we provide some examples demonstrating the impact of the presence of the energy dissipation, the external forces, the saline flow and the cooling factor. To achieve this, we based on existing parameters in the literature, for example, in \cite{materiel2,materiel1,materiel3}. Let us define these parameters: the electrical conductivity $\sigma$, the thermal conductivity $\eta$ and the blood conductivity $\nu$ are depend on a temperature-dependent function and are given by the following equation:}
\begin{eqnarray*}
	\sigma(\theta)&=&\left\{\begin{array}{ll}
		\sigma_0 \exp ^{0.015\left( \theta - \theta_b\right)} & \text { for } \theta \leq 99^{\circ} \mathrm{C} \\
		2.5345 \sigma_0 & \text { for } 99^{\circ} \mathrm{C}<\theta \leq 100^{\circ} \mathrm{C} \\
		2.5345 \sigma_0\left(1- 0.198\left(\theta-100^{\circ} \mathrm{C}\right)\right) & \text { for } 100^{\circ} \mathrm{C}< \theta \leq 105^{\circ} \mathrm{C} \\
		0.025345 \sigma_0 & \text { for } \theta >105^{\circ} \mathrm{C}
	\end{array}\right .
	\\
	\eta(\theta) &=&\left\{\begin{array}{ll}
		\eta_0+ 0.0012 \left(\theta -  \theta_b\right) & \text { for } \theta \leq 100^{\circ} \mathrm{C} \\
		\eta_0+0.0012\left(100^{\circ} \mathrm{C}-\theta_b\right) & \text { for } \theta>100^{\circ} \mathrm{C}
	\end{array}\right .
\end{eqnarray*}
where $\sigma_0 = 0.6$ and  $\eta_0 = 0.54$ are the constant electrical conductivity and
the thermal conductivity, respectively, at core body temperature, $\theta_b\left(=37^{\circ} \mathrm{C}\right)$ and $\varrho=1$.
The viscosity and density of blood are $0.0021 \mathrm{~Pa} \cdot \mathrm{s}$ and $1000 \mathrm{~kg} / \mathrm{m}^3$, respectively, whereas those of saline are $0.001 \mathrm{~Pa} \cdot \mathrm{s}$ and $1000 \mathrm{~kg} / \mathrm{m}^3$, respectively, based on the material property of water.  
\textcolor{blue}{\subsection{Computation domain and discretization}
 In our numerical study we consider a domain $\Omega$ as illustrated in Figure \ref{fig:omega} and we fix  values  $L= 1.5$, $H= 0.5$ and $r= 0.075$. 
We assume that the thickness of the electrode is negligible, and we abound its effect in the numerical simulation.}

For the time discretization, fixing an integer  $M$, we define   a time subdivision $t_0= 0 < \cdots < t_M =T$  
and the time steps as $\tau_n = {t_{n+1}-t_n}, i=0, \cdots, M-1$. 
While we use a finite element discretization in space. 
Namely, we exploit the finite element P1-Bubble to compute the values of the velocity variable and the P1 finite element to approximate the temperature, pressure and potential unknowns.
In the sequel, we keep the same notations of the variables $\boldsymbol{v}$, $P$, $\theta$ and $\varphi$ for the discrete versions.
\\
We now deal with the reformulation of the studied model into an algebraic system of differential equations that allows us to use a time lag scheme. 
That is,  given the solution of the heat equation at the previous time, we solve then the decoupled potential and Navier--Stokes equations (\ref{DecoupledNS})-(\ref{DecoupledPhi})  for time step $n-1$ as
\begin{equation}
	\left\{\begin{array}{rclll}
		\boldsymbol{v}_{t}-\nabla \cdot(\nu( {\theta}^{n-1}) \mathbb{D}(\boldsymbol{v}))+\nabla \cdot(\boldsymbol{v} \otimes \boldsymbol{v})+\nabla P & =&\boldsymbol{F}({\theta}^{n-1} ) & \hbox { in } \Omega_{T} \\
		\nabla \cdot \boldsymbol{v} & =&0 &\hbox{ in } \Omega_{T} \\
		\boldsymbol{v} & =&0 &\hbox{ on }  \Sigma_{D}\\
		-P \boldsymbol{n}+\nu({\theta}^{n-1}) \mathbb{D}(\boldsymbol{v}) \boldsymbol{n} & =&\mathbf{0} & \hbox { on } \Sigma_{N }\\   
		\boldsymbol{v}(\boldsymbol{x}, 0) & =&\boldsymbol{v}_{0}(\boldsymbol{x}) &\hbox { in } \hbox { }\Omega 
	\end{array}\right . 
	\hbox{ , } 
	\left\{	\begin{array}{rclll}
		- \operatorname{div}(\sigma({\theta}^{n-1}) \nabla \varphi) & = &0 & \hbox { in } &\Omega_{T} \\
		(\sigma({\theta}^{n-1})\nabla \varphi) \cdot \boldsymbol{n} & = & g & \hbox { on } &\Sigma_{N} \\
		\varphi & = & 0 & \hbox { on } &\Sigma_{D} \\
	\end{array}\right. 
\end{equation}
We get then the potential $\varphi^{n-1}$, the velocity $\boldsymbol{v}^{n-1}$ and the the pressure  $P^{n-1}$ at the time step $n-1$. 
We solve 
then the temperature equation at  time $n$. \\
A more interesting question is how to treat the temperature advection-diffusion equation.
By default, not all discretizations of this equation are equally stable unless we use regularization techniques. 
To achieve this, we can use  discontinuous elements which is more efficient for pure advection problems. 
But in the presence of diffusion terms, the discretization of the Laplace operator  is cumbersome due to the large number of additional terms that must be integrated on each face between the cells.
A better alternative is therefore to add some nonlinear viscosity   $\tilde {\eta}(\theta)$ to the model  that only acts in the vicinity of shocks and other discontinuities. 
$\tilde {\eta}(\theta)$ is chosen in such a way that if $\theta$ satisfies the original equations, the additional viscosity is zero.
To achieve this, the literature contains a number of approaches. 
We will opt here for the stabilization strategy developed by Guermond and Popov \cite{guermond2011entropy} 
that builds on a suitably defined residual and a limiting procedure for the additional viscosity. 
To this end, let us define a residual $R_\alpha(\theta)$ as follows:
$$
R_\alpha(\theta)=\left(\frac{\partial \theta}{\partial t}+\boldsymbol{v} \cdot \nabla \theta-\nabla \cdot \eta(\overline \theta) \nabla \theta- \displaystyle 
\nu( \overline \theta) \mathbb D  (\boldsymbol{v}) : \mathbb D  (\boldsymbol{v})  
-\sigma  ( \overline \theta) |\nabla \varphi|^2 \right) \theta^{\alpha-1}, \quad \alpha \in [1,2].
$$
Note that $R_\alpha(\theta)$ will be zero if $\theta$ satisfies the temperature equation. Multiplying terms out, we get the following, entirely equivalent form:
$$
R_\alpha(\theta)=\frac{1}{\alpha} \frac{\partial\left(\theta^\alpha\right)}{\partial t}+\frac{1}{\alpha} \boldsymbol{v} \cdot \nabla\left(\theta^\alpha\right)-\frac{1}{\alpha} \nabla \cdot \eta(\overline \theta) \nabla\left(\theta^\alpha\right)+\eta(\overline \theta)(\alpha-1) \theta^{\alpha-2}|\nabla \theta|^2-\gamma \theta^{\alpha-1}.
$$
Using the latter, we can    define the artificial viscosity as a piecewise constant function defined on each cell $K$ with diameter $h_K$ separately as follows:
$$
\left.\tilde {\eta}_\alpha(\theta)\right|_K=\beta\|\boldsymbol{v}\|_{L^{\infty}(K)} \min \left\{h_K, h_K^\alpha \frac{\left\|R_\alpha(\theta)\right\|_{L^{\infty}(K)}}{c(\boldsymbol{v}, \theta)}\right\}
$$
where, 
$\beta$ is a stabilization constant and 
$\displaystyle c(\boldsymbol{v}, \theta)=c_R\|\boldsymbol{v}\|_{L^{\infty}(\Omega)} \operatorname{var}(\theta)|\operatorname{diam}(\Omega)|^{\alpha-2}$ where
$\operatorname{var}(\theta)=\max _{\Omega} \theta-\min _{\Omega} \theta$ is the range of present temperature values  and $c_R$ is a dimensionless constant. \\
If on a particular cell the temperature field is smooth, then we expect the residual to be small and the stabilization term that injects the artificial diffusion will be rather small,   when no additional diffusion is needed. 
On the other hand, if we are on or near a discontinuity in the temperature field, then the residual will be large and the artificial viscosity will ensure the stability of the scheme.

\subsection{Validation of model in different cases of data} 

\textcolor{blue}{In order to validate our proposed model \eqref{System}, we provides some numerical simulations demonstrating the influence of the energy dissipation, the external forces, the saline flow and cooling factor.
\subsubsection{Example 1: the energy dissipation}
This example aims to demonstrate the effect of the energy dissipation. We consider the following configurations. }
We impose a velocity 
${\boldsymbol{v}} = \left(  \begin{array}{l} y(H-y) \\ 0 \end{array}\right)$ on boundary $\Gamma_1$ 
and on boundaries $\Gamma_i, i=2,  4$,  we assume that the velocity is zero. 
While on the 3th boundary $\Gamma_3$, we assume that $-\mathbb{S}(\boldsymbol{v},P)\boldsymbol{n}=\mathbf{0}$. 
Concerning the temperature, on the boundaries $\Gamma_i$,  $i=1, 2,4$, we apply the condition 
$(\eta(\theta) \nabla \theta) \cdot \boldsymbol{n} + \alpha \theta =\alpha \theta_l$, with  $\alpha = 1$ and $\theta_l = \theta_b = 37^{\circ} \mathrm{C}$.
On $\Gamma_3$, we impose an artificial boundary condition, that is the homogeneous Neumann boundary conditions. 
For the potential equation, we fix $g=3$ on $\Gamma_5$ and the homogeneous Dirichlet condition in the remaining boundaries.
In this example, we neglect the second member of the Navier--Stokes equations, so $\mathbf F = 0$.  
The initial conditions for the heat transport equation and the Navier--Stokes system are constructed by solving the associated stationary equation. 
\textcolor{blue}{We notice that the computed potential evolves very slowly during the time iterations, see Figure \ref{fig:test10}. Indeed, the only data in the potential equation is the source $\varphi$ which is constant and the electrical conductivity $\sigma(\theta) = \sigma_0 \exp(0.015(\theta-{\theta}_b))$. Thus, we omit the figures of the potential as there is no significant change during the iterations.}
We then focus our reading for this example on the influence of the presence of the energy dissipation term due to viscosities
$\displaystyle \nu(\theta) \mathbb{D}(\boldsymbol{v}) : \mathbb{D}(\boldsymbol{v})$ and 
$\displaystyle \lambda(\theta) \nabla \varphi\cdot \nabla \varphi$. 
As we see in the potential figures (Figure \ref{fig:test10}), with data $g>0$ on $\Gamma_5$ (the head of the electrode), we create a potential with a higher density in a neighborhood of the border $\Gamma_5$. 
In the same neighborhood, a temperature is produced. This shows the impact of the quadratic term $\displaystyle \lambda(\theta) \nabla \varphi\cdot \nabla \varphi$ as an energy source for the heat equation (Figure \ref{fig:test1} - column 2).
However, when there is a blood flow, the temperature produced will be moved to the outlet of the domain. 
This is a consequence of the transport term $\boldsymbol{v} \cdot \nabla T$.
In order to present these evolutions, we show in Figure \ref{fig:test1} the results of numerical simulation at four different times $t=0$, 
$t= \frac{T}{4}$, $t= \frac{T}{2}$,  and $t=T$, 
where each row of the figure represents the corresponding time in the same order. 
In the first column, we show the velocity field and the pressure. In the second column, we show the heat transport.
\begin{figure}[pos=!ht]
	\begin{minipage}{\linewidth} 
		\includegraphics[   width=.5\linewidth]{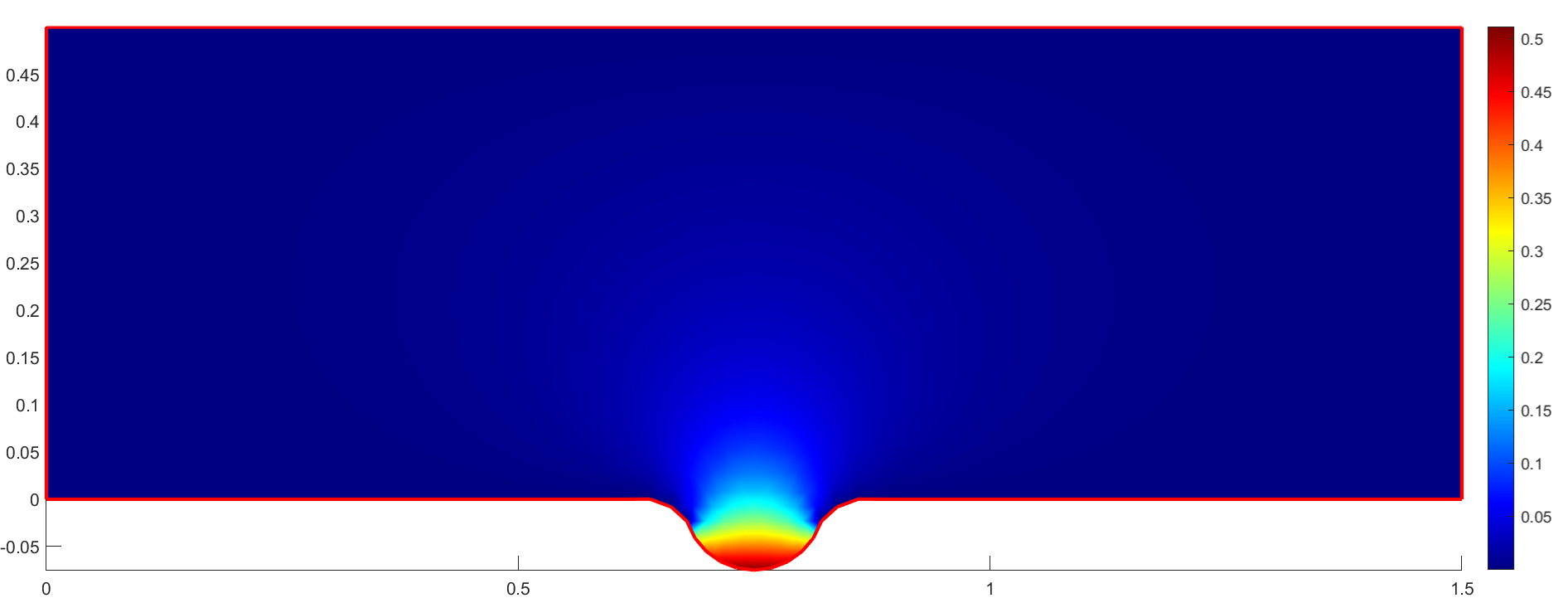} 
		\includegraphics[   width=.5\linewidth]{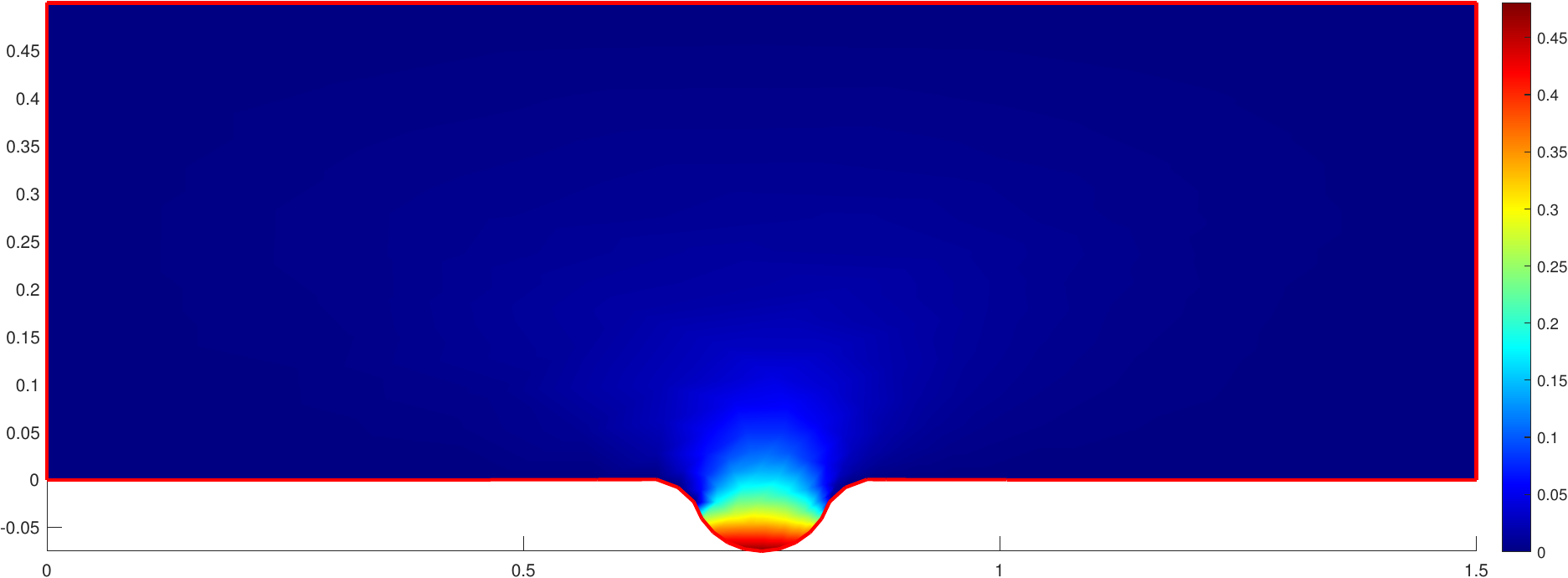} 
	\end{minipage}\\
	\begin{minipage}{\linewidth}  
		\includegraphics[   width=.5\linewidth]{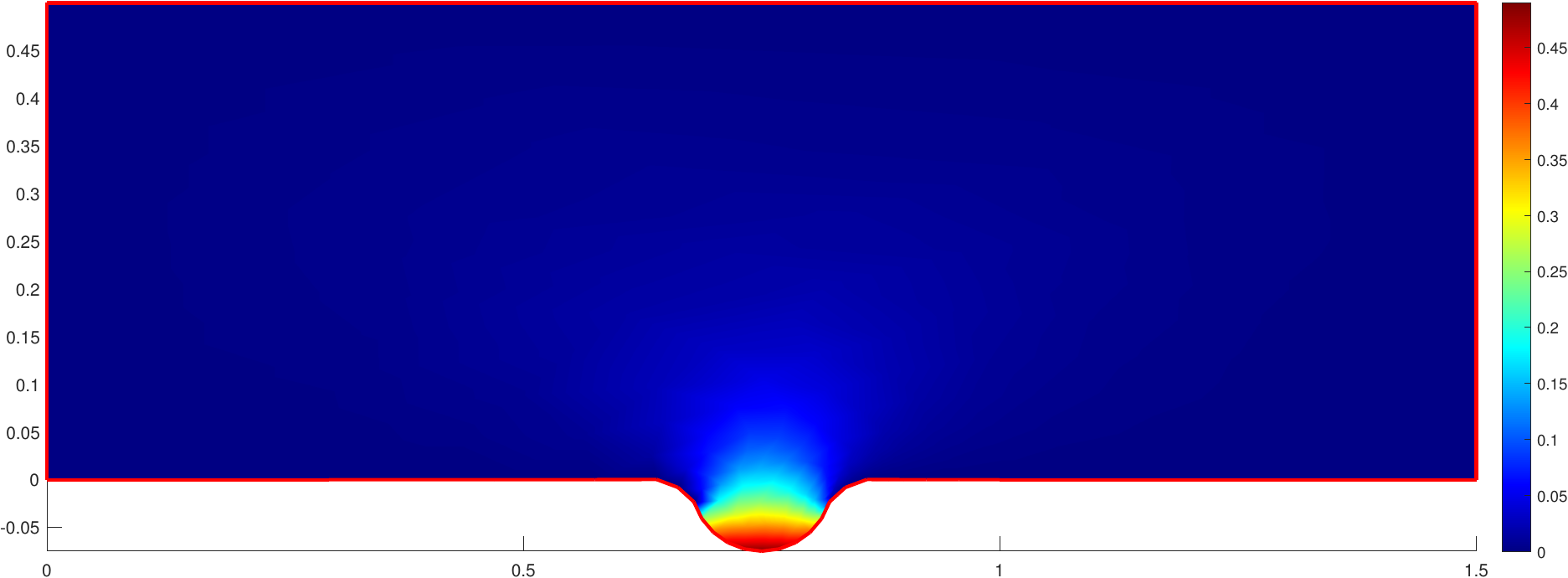}  
			\includegraphics[   width=.5\linewidth]{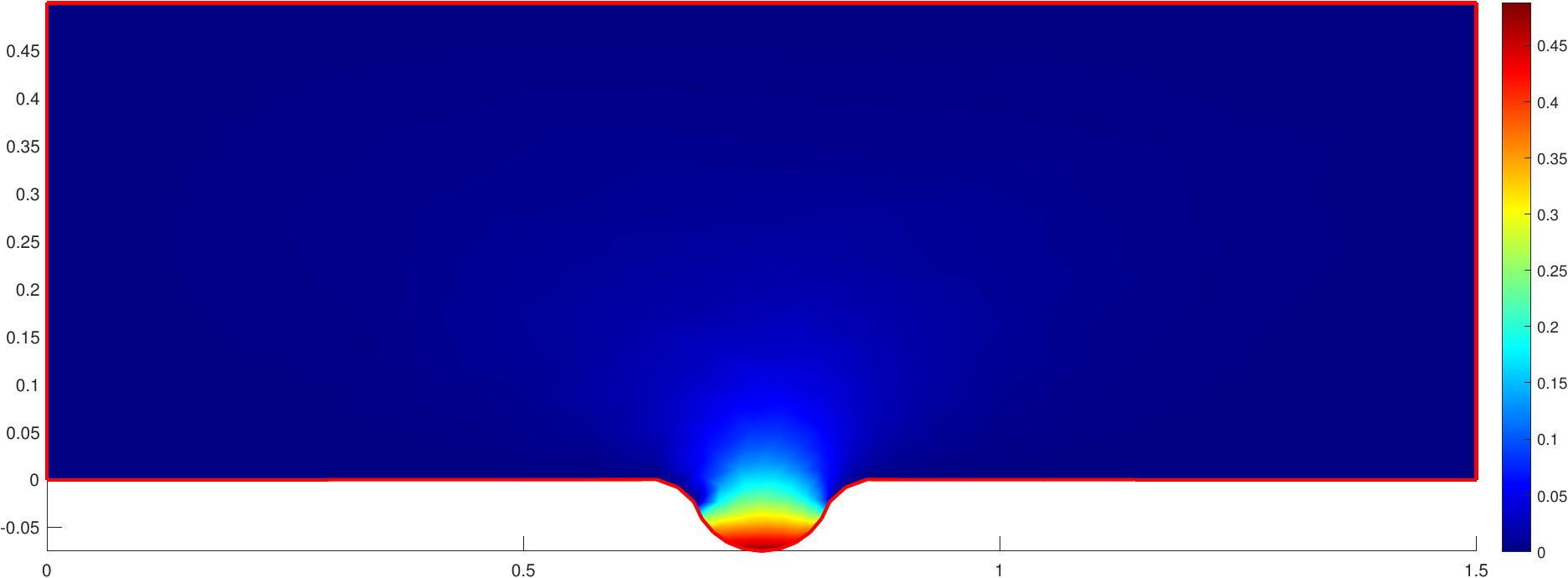}
	\end{minipage} 
	\caption{Example 1 : evolution of potential 
		at four time moments $t=0$ (line 1, column 1), 
		$t= \frac{T}{4}$ (line 1, column 2), $t= \frac{T}{2}$ (line 2, column 1) and $t=T$ (line 2, column 2). }
	\label{fig:test10}
\end{figure}  
\begin{figure}[pos=!ht]
	\begin{minipage}{\linewidth}
		\includegraphics[  width=.5\linewidth]{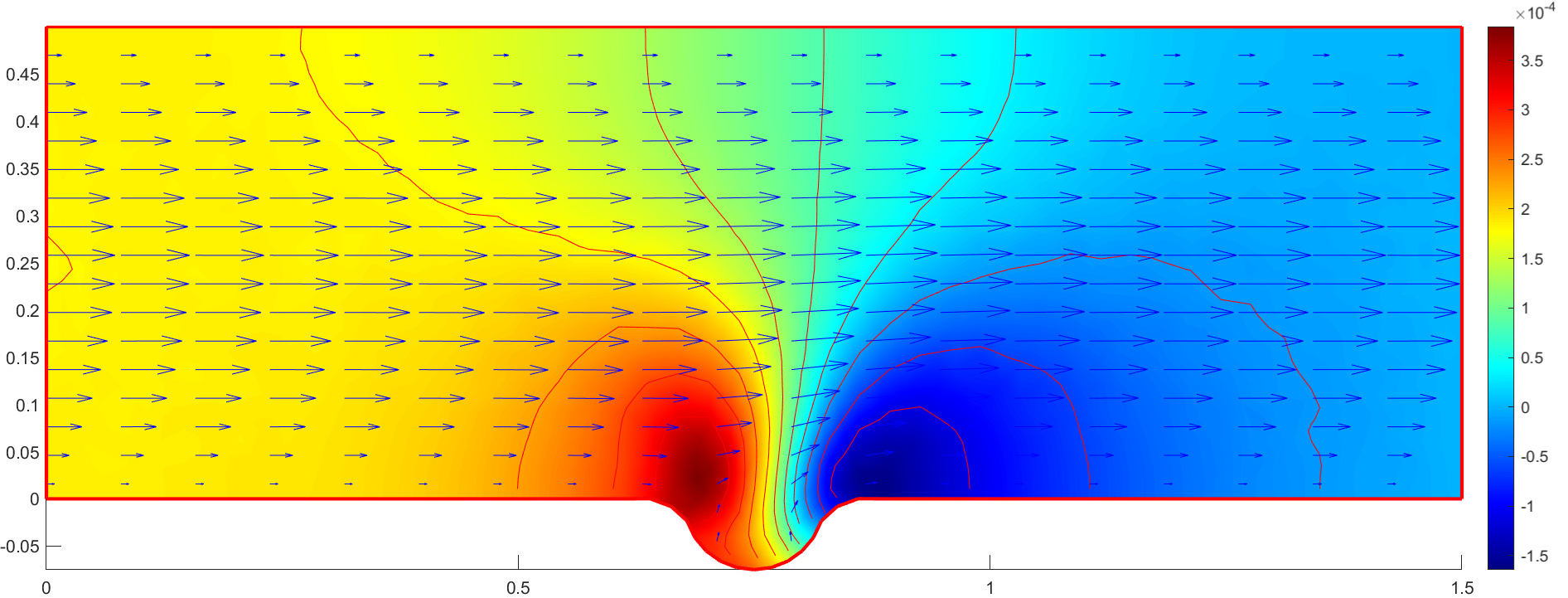}  
		\includegraphics[  width=.5\linewidth]{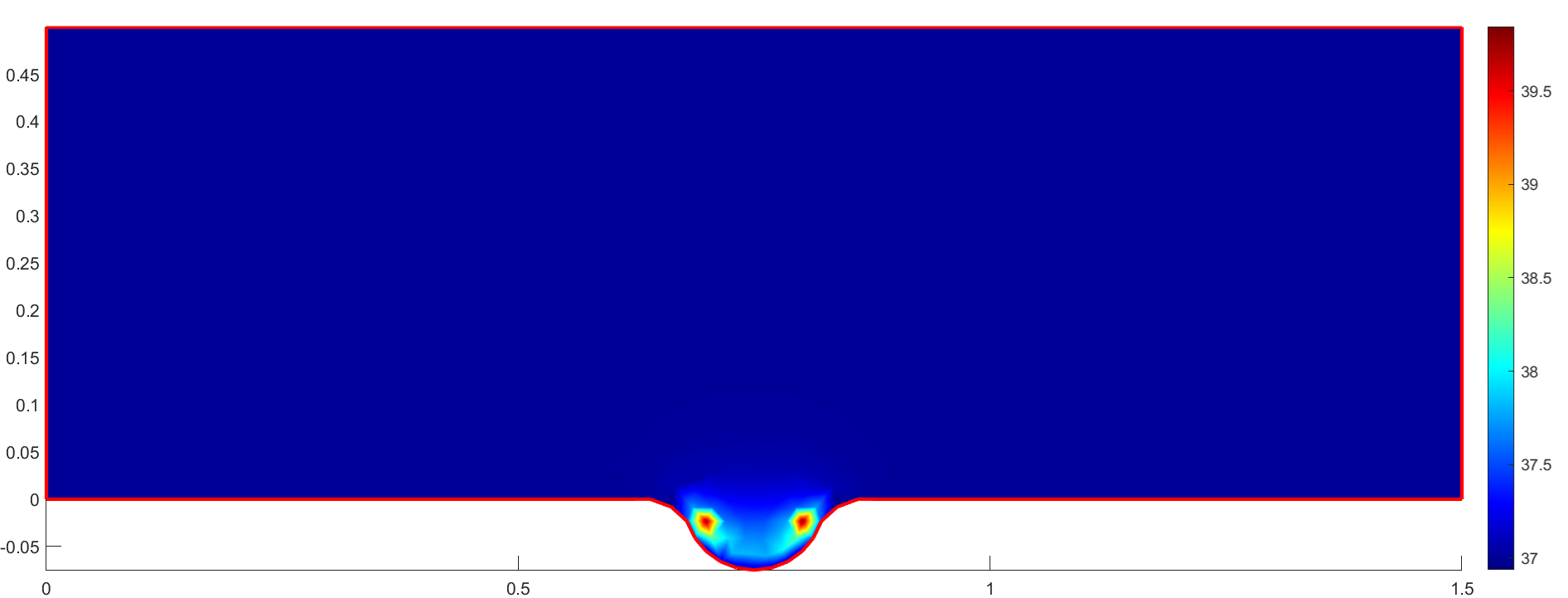}    
		\end{minipage}\\
	\begin{minipage}{\linewidth}
			\includegraphics[  width=.5\linewidth]{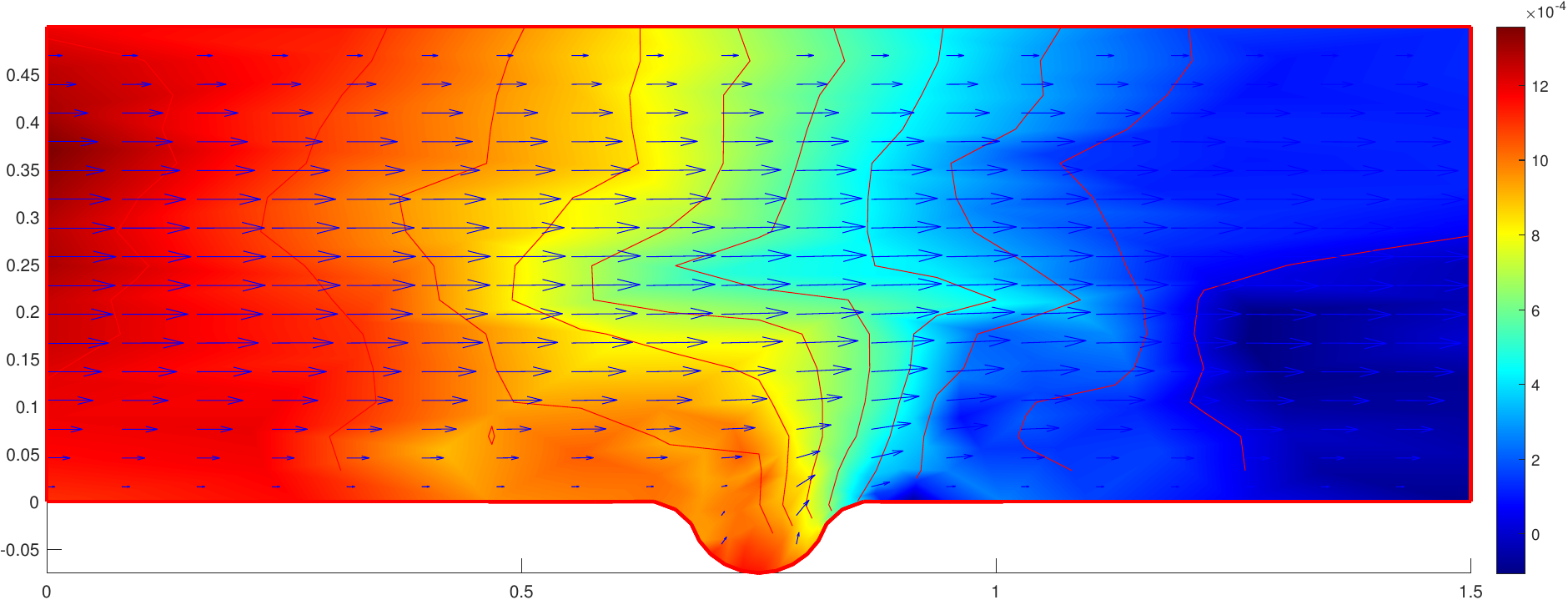}  
			\includegraphics[  width=.5\linewidth]{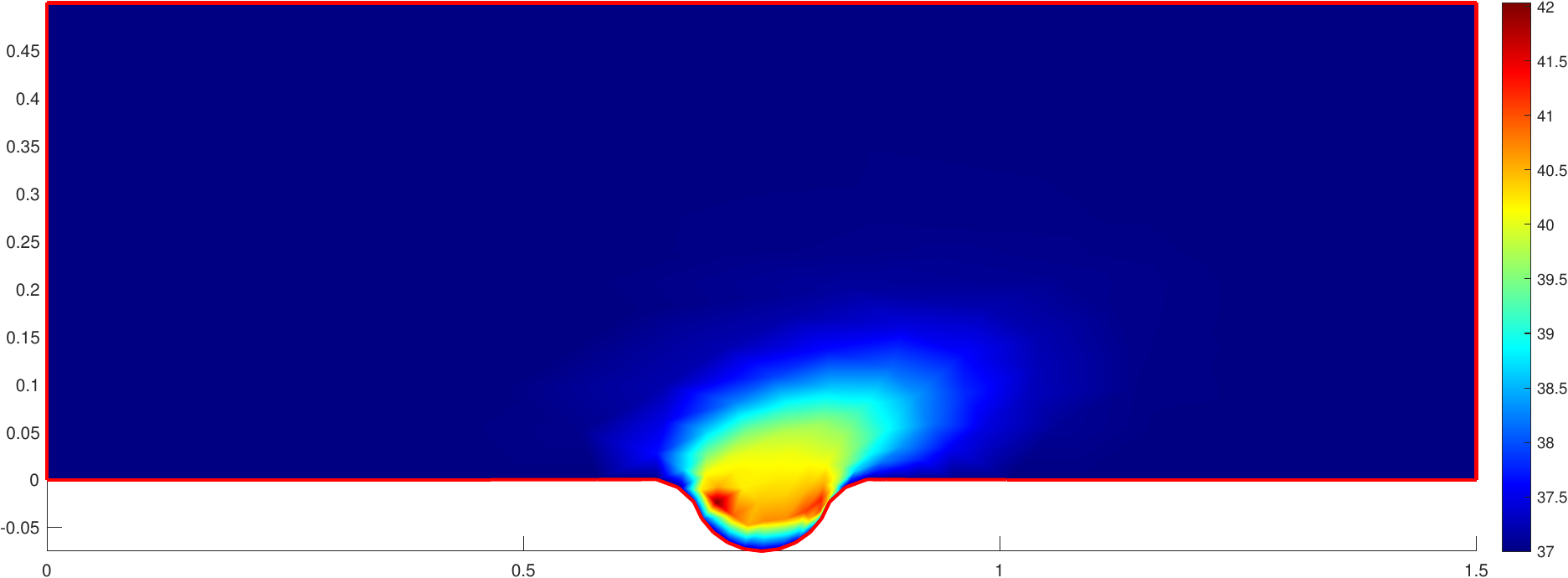}   
		\end{minipage}\\
	\begin{minipage}{\linewidth}
			\includegraphics[  width=.5\linewidth]{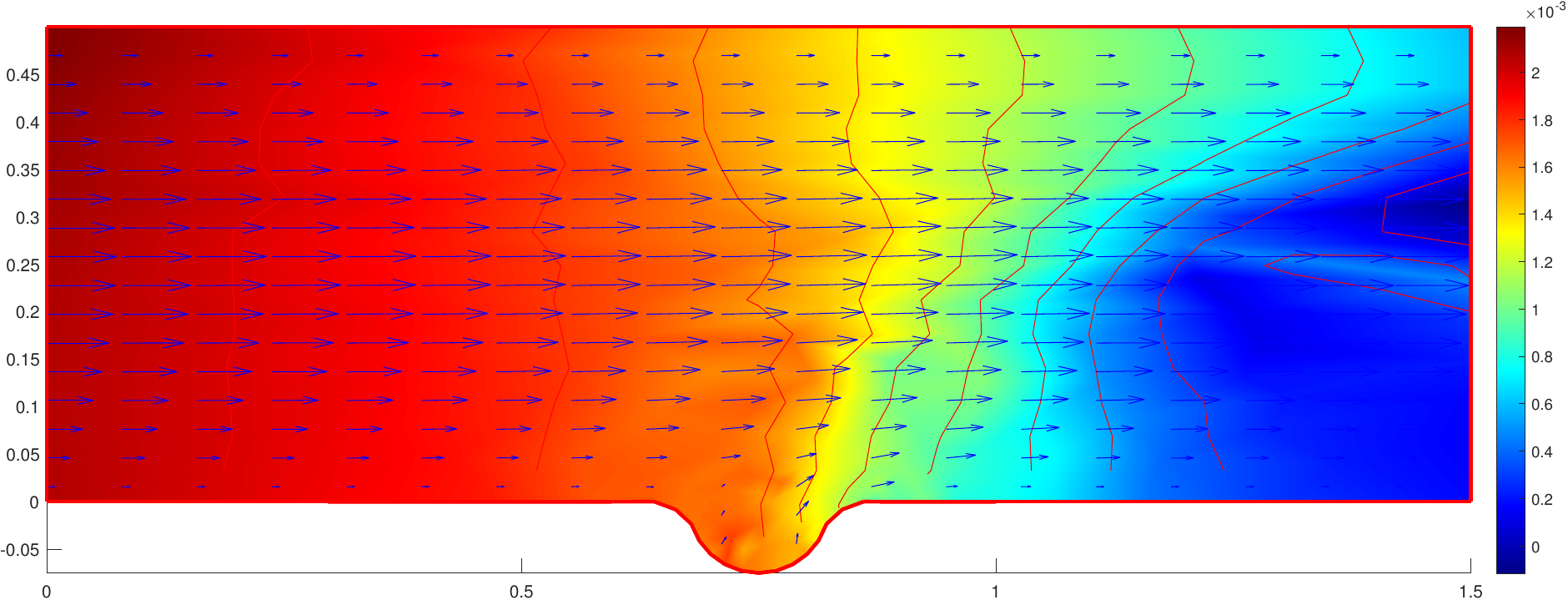}  
			\includegraphics[  width=.5\linewidth]{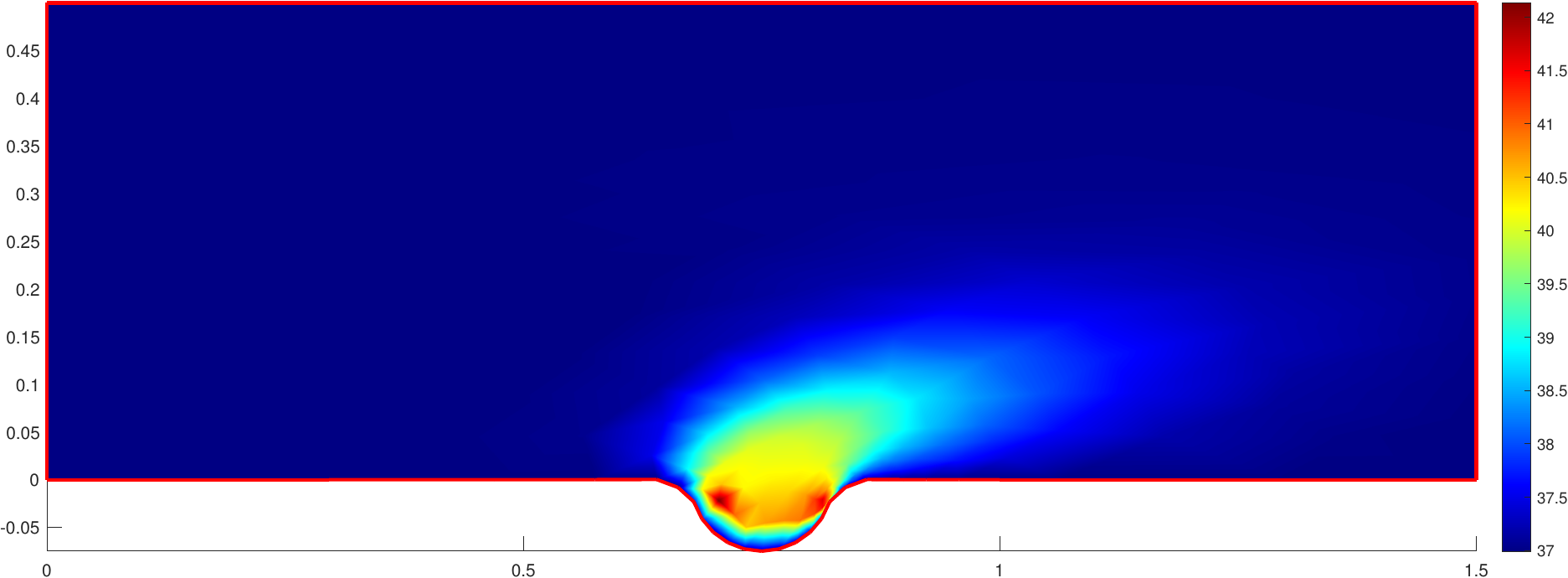}  
		\end{minipage}\\
	\begin{minipage}{\linewidth}
			\includegraphics[  width=.5\linewidth]{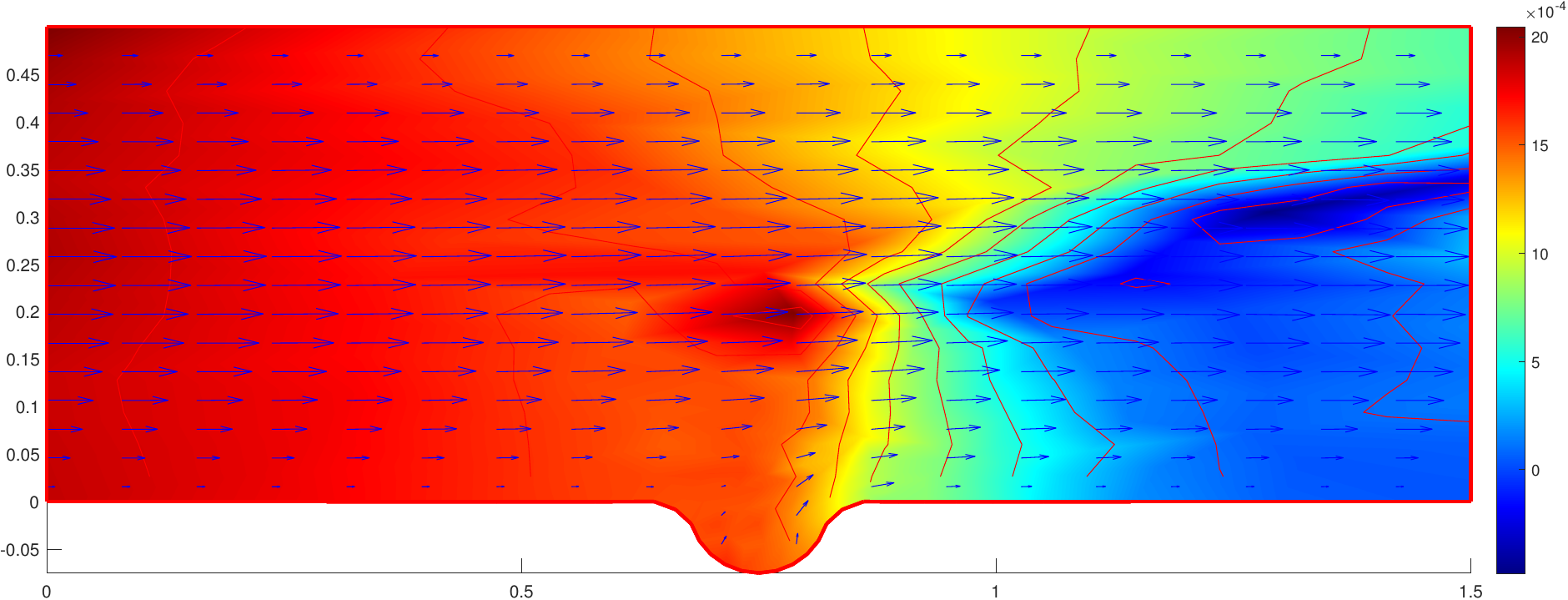}  
			\includegraphics[  width=.5\linewidth]{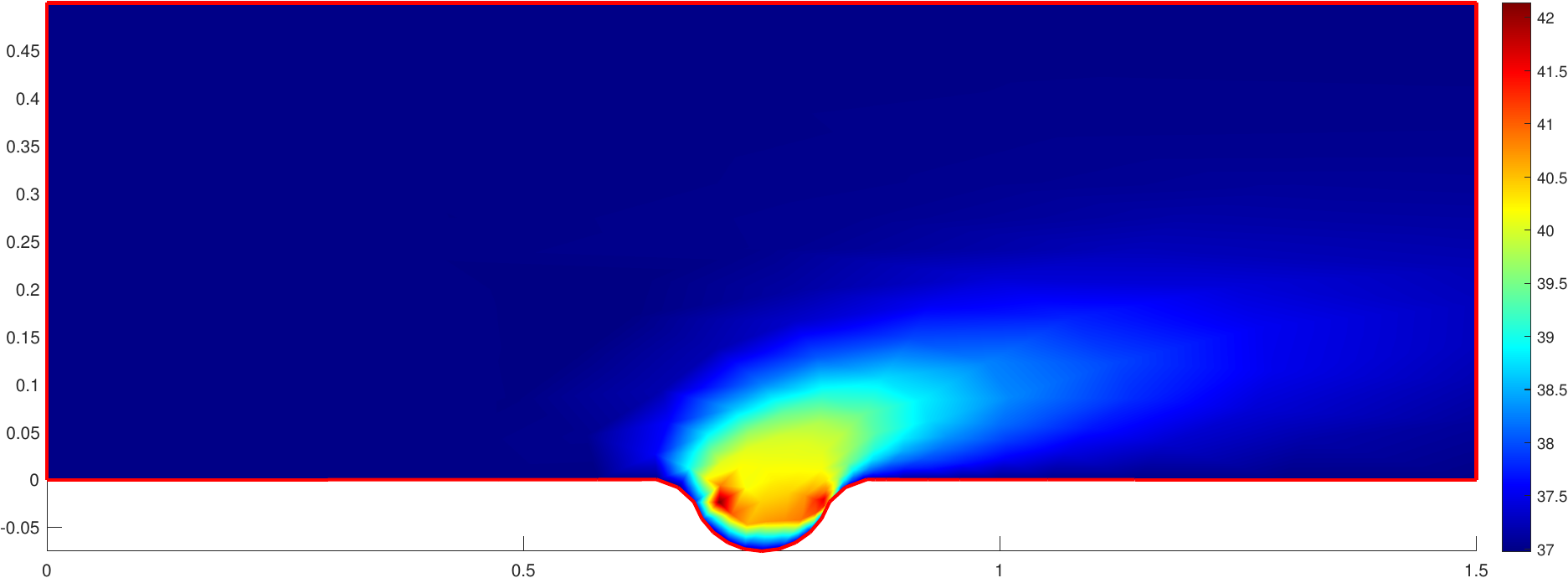}  
		\end{minipage}
	\caption{Example 1 : evolution of  velocity and pressure (column 1),  and heat (column2) 
			at four time moments $t=0$ (line 1), 
			$t= \frac{T}{4}$ (line 2), $t= \frac{T}{2}$ (line 3), and $t=T$ (line 4). }
	\label{fig:test1}
\end{figure}  
As the temperature changes are counted between $40^{\circ} \mathrm{C}$ and $42^{\circ} \mathrm{C}$ as a maximum value, evolving the electrical conductivity at these points we find that $\sigma$ varies between $0.627$ and $0.610$, i.e. a variation of the order $10^{-2}$. 
This is consistent with the results obtained. 
This remark is also applicable to the velocity field. Indeed, we notice that the motion of the fluid is almost the same during the time iterations.

\noindent  \textcolor{blue}{Let us now return to the effects of the dissipation terms. In fact, for quite large values of $g$, we have marked a rapid increase in the temperature as well as in the order of rotation of the fluid.  Thus, we arrive at an explosion of the values.}

\textcolor{blue}{\subsubsection{Example 2: external forces}
	In this example, we are interested in the behavior of the heat when the fluid source term is non-zero, and also if we change the boundary condition in $\Gamma_3$}. Impose a boundary condition on $\Gamma_3$ to limit the heat exchange with the exterior. 
	Indeed, we consider the condition $(\eta(\theta) \nabla \theta) \cdot \boldsymbol{n} + \alpha \theta =\alpha \theta_l$ also on $\Gamma_3$, and we take the fluid source $\boldsymbol F= - \left(\begin{array}{l}  0\\ 10^{-3}  9.81/303 \left( \theta - \theta_b  \right)   \end{array}\right)$ as in Boussinesq equations, and decrease $g$ to $1$. 
	\\
	We omit here the figures of the solutions at the initial iterations since they are almost the same as in the previous example. 
	We also omit the figures of the potential as there is no significant change during the iterations. 
	We represent on Figure \ref{fig:test2} the evolution of the velocity and pressure (column 1) and of the heat (column 2)  at times $t=\frac{T}{8}$, $t=\frac{T}{4}$, $t=\frac{T}{2}$  and $t=T$. 
	We also observe the rotation of the fluid in the areas subject to heat variations, especially in the area near the outlet boundary $\Gamma_3$.
	A result that we justify by the structure of the source term $F$, in particular the term $\theta-\theta_b$, indeed by the principle of maximum the velocity changes its sign according to the value of the temperature $\theta$ whether it is lower or higher than $\theta_b$. 
\begin{figure}[pos =!ht]
	\begin{minipage}{\linewidth}
		\includegraphics[  width=.5\linewidth]{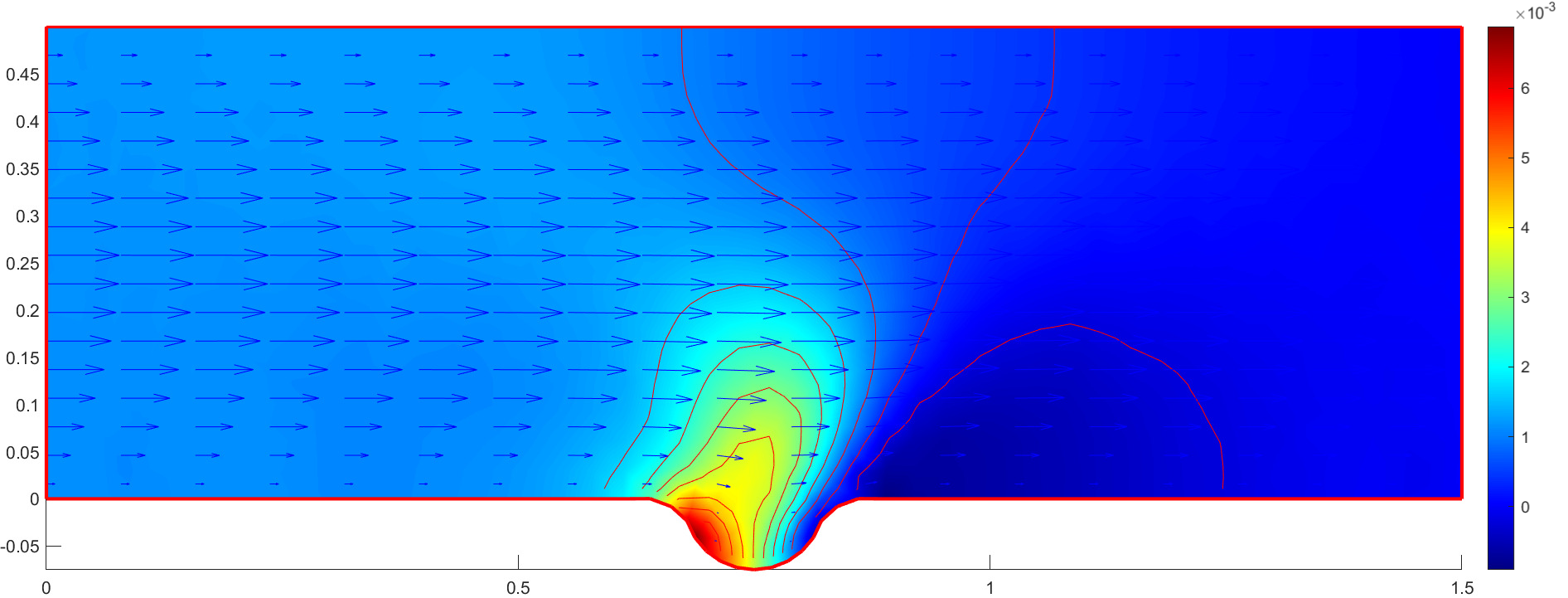}  
		\includegraphics[  width=.5\linewidth]{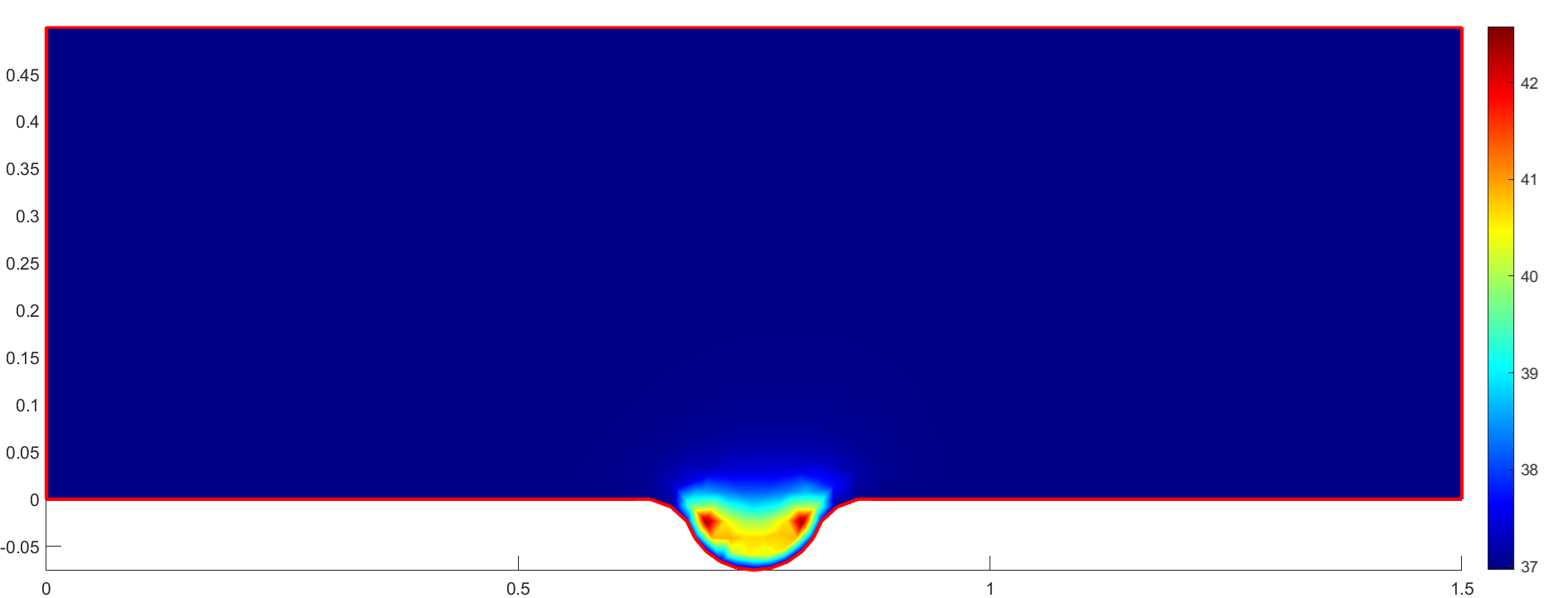}    
	\end{minipage}\\
	\begin{minipage}{\linewidth}
		\includegraphics[  width=.5\linewidth]{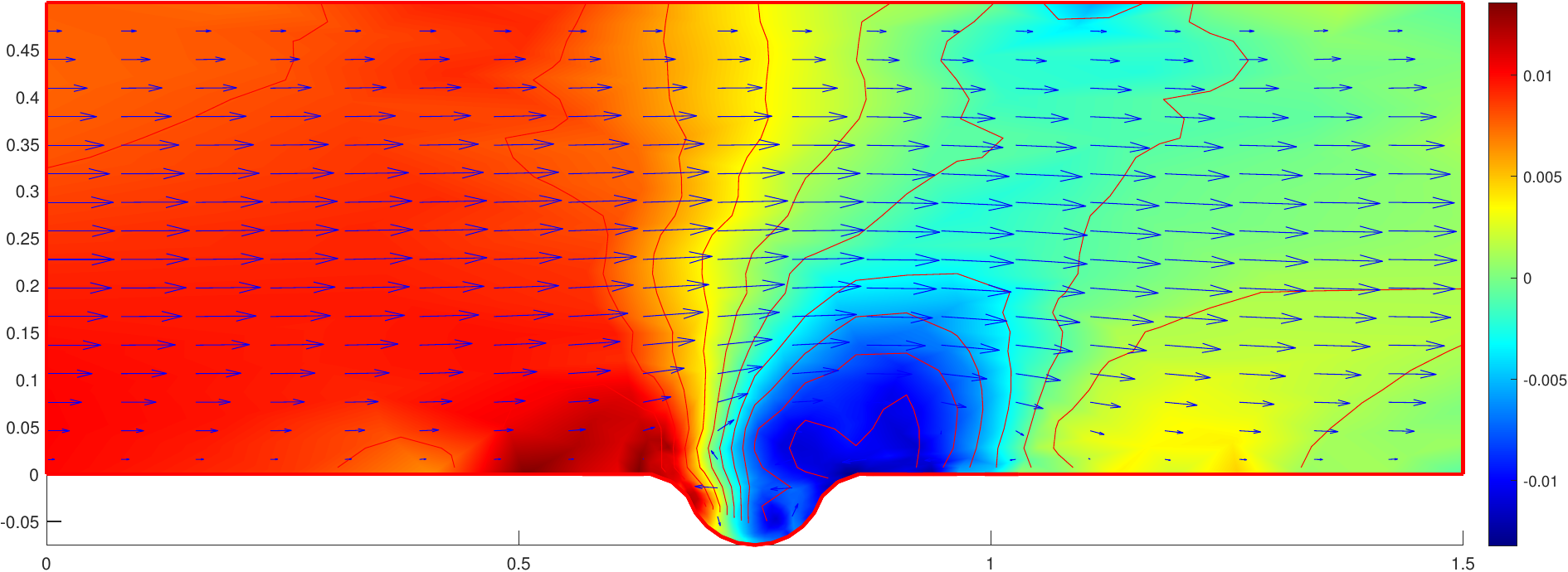}  
		\includegraphics[  width=.5\linewidth]{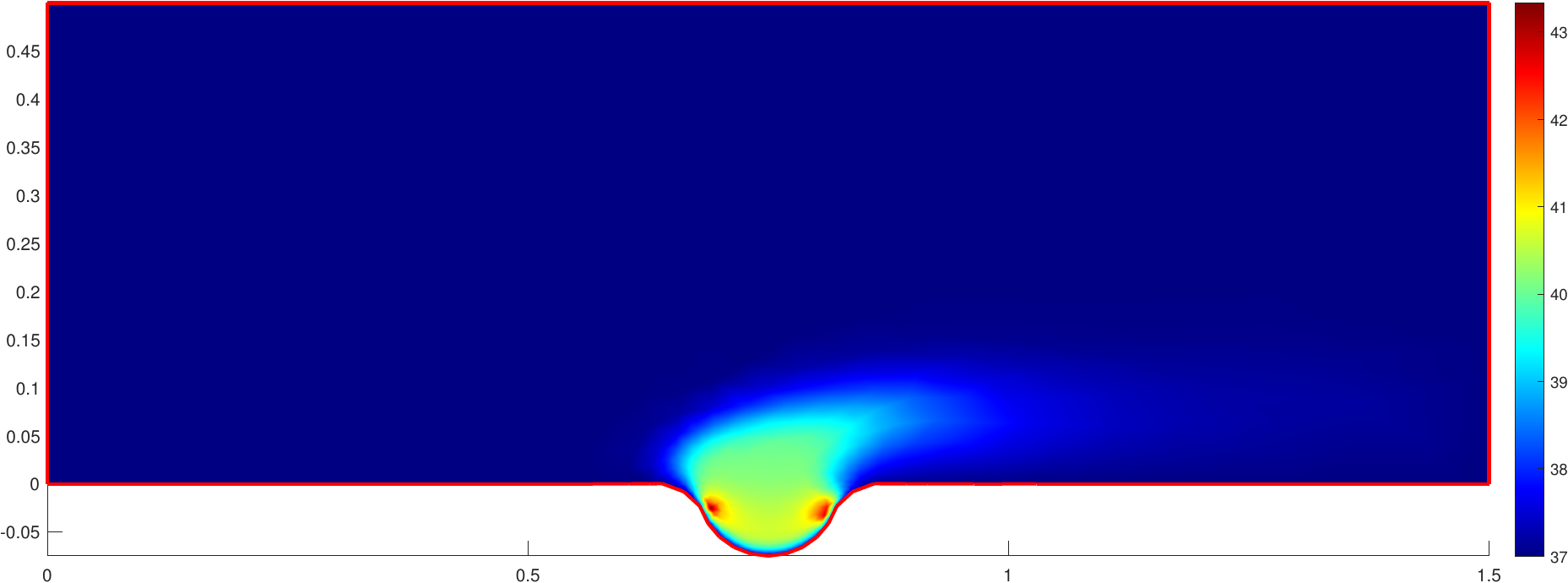}   
	\end{minipage}\\
	\begin{minipage}{\linewidth}
		\includegraphics[  width=.5\linewidth]{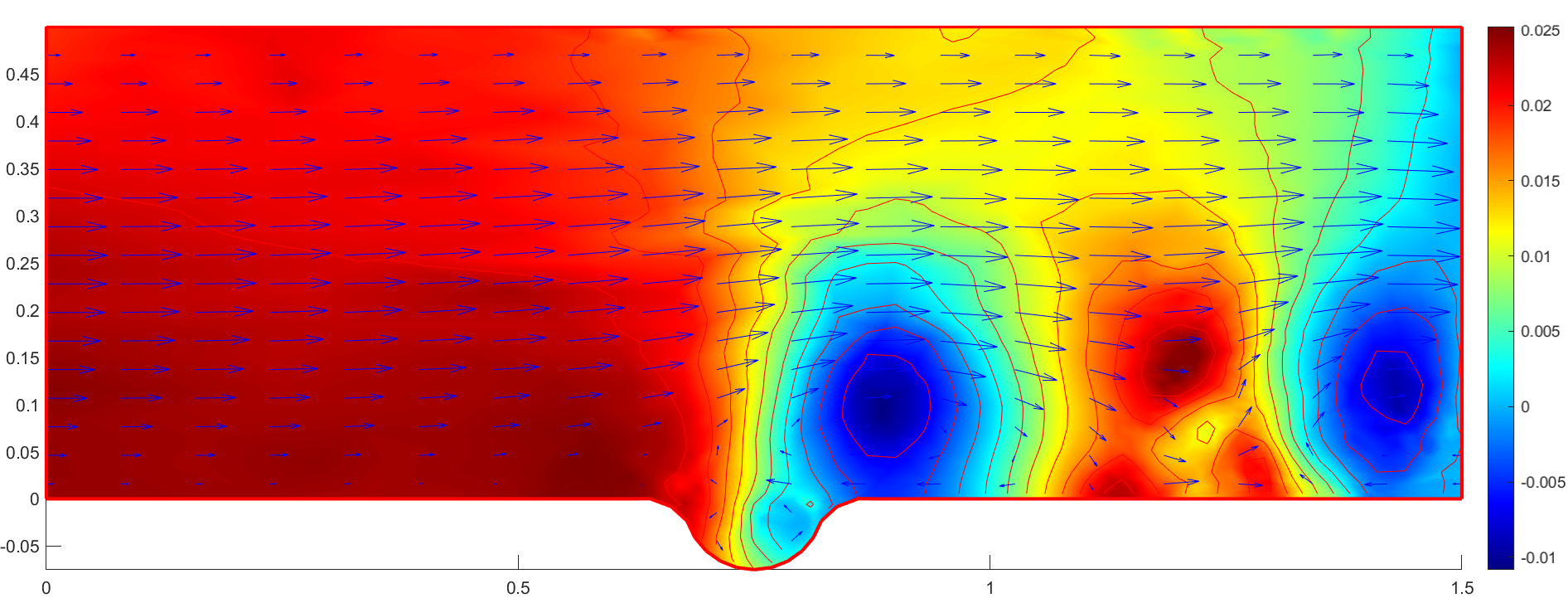}  
		\includegraphics[  width=.5\linewidth]{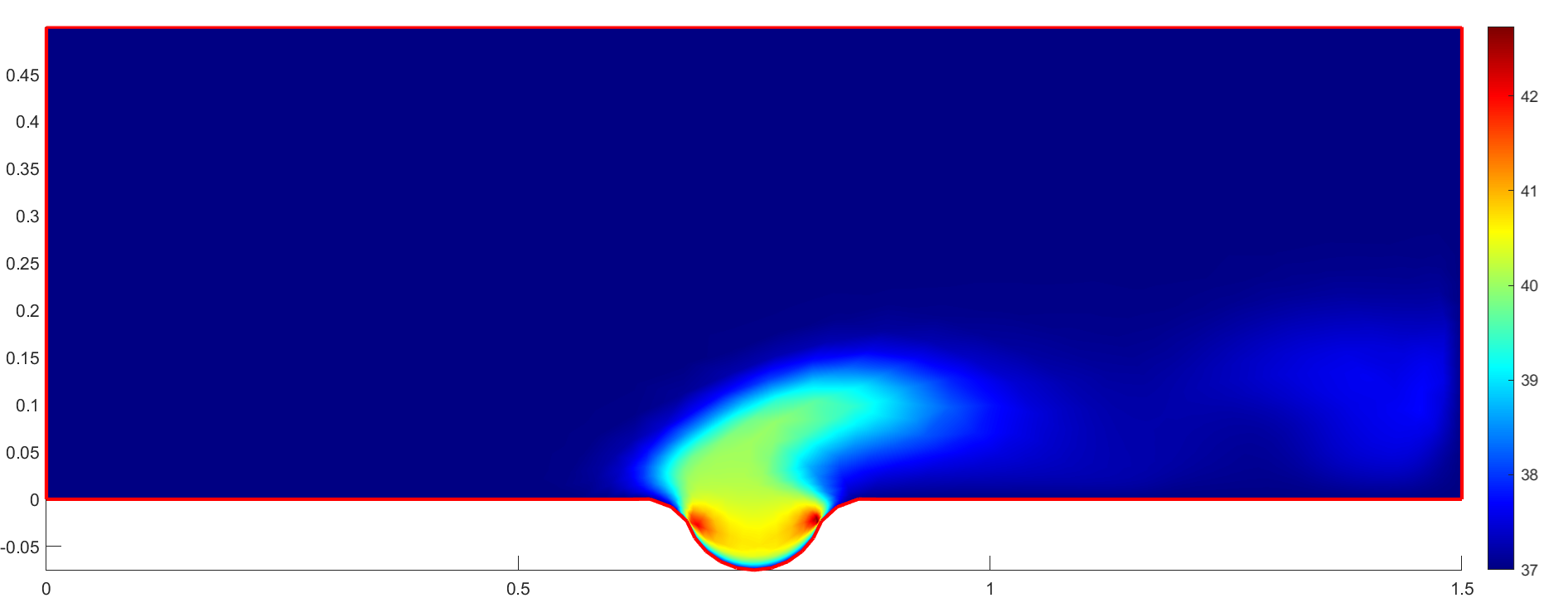}   
	\end{minipage}\\
	\begin{minipage}{\linewidth}
		\includegraphics[  width=.5\linewidth]{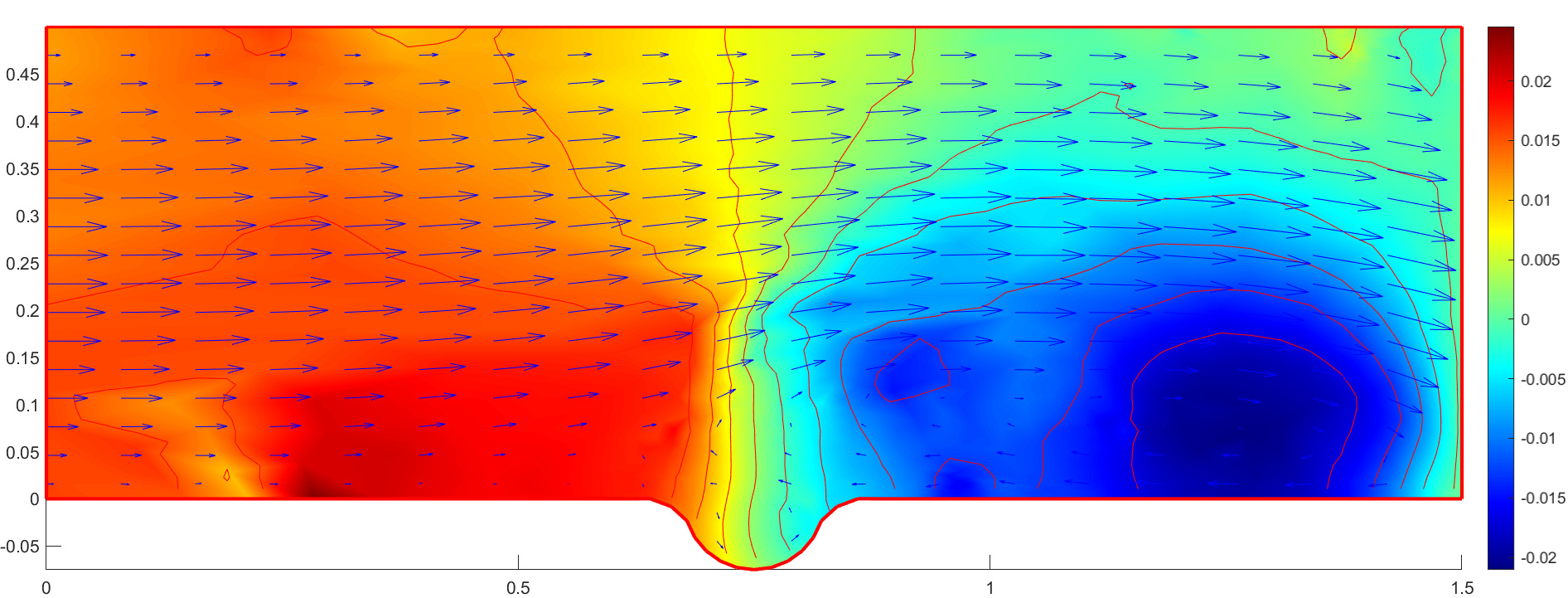}  
		\includegraphics[  width=.5\linewidth]{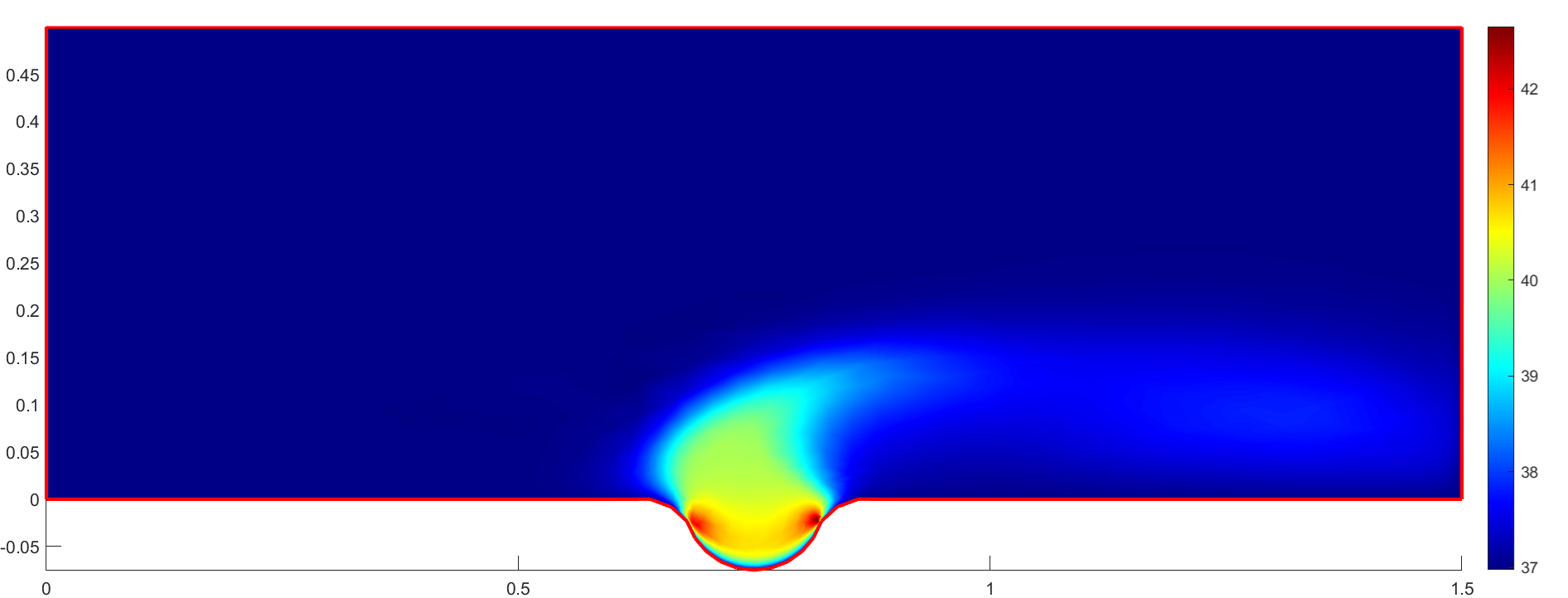}    
	\end{minipage}
	\caption{Example 2 : evolution of velocity and pressure (column 1), heat (column 2) 
		at four time moments $t=\frac{T}{8}$ (line 1), 
		$t= \frac{T}{4}$ (line 2), $t= \frac{T}{2}$ (line 3) and $t=T$ (line 4). }	
	\label{fig:test2}
\end{figure}  
\textcolor{blue}{
	\subsubsection{Example 3: effect of the saline flow}
We note that with the configuration obtained in Example 2, the reduction in $g$ implies a reduction in the potential in the domain and consequently the calculated heat is reduced, but the temperature around the catheter reaches critical values between $40^{\circ}C$ and $42^{\circ}C$. It is therefore of course necessary to cool this area and lower its temperature. To do this, it is necessary to inject a fluid whose saline heat is $20^{\circ}C$, i.e. we used
${\boldsymbol{v}}=\boldsymbol{v}_s = \left(\begin{array}{l}  \displaystyle \frac{2}{r}(x-\frac L 2  +r)(\frac L 2 +r -x)(\frac L 2   -x) \\ \displaystyle  \frac{-2}{r} (x-\frac L 2  +r)(\frac L 2 +r -x)y \end{array}\right)$ and $T=T_s=20^{\circ}C$ on boundary $\Gamma_5$. Clearly, we notice that the injected saline flow $\boldsymbol{v}_s$ diminishes the calculated heats  (see Figure \ref{fig:test3}). This leads to the possibility of cooling the domain by the saline fluid from $\Gamma_{5}$(maximum heat between $39^{\circ}C$ and $40^{\circ}C$). In addition, we observe the rotation of the fluid in the areas subject to heat variations, especially in the area near the outlet boundary $\Gamma_3$.
}
\begin{figure}[pos =!ht]
	\begin{minipage}{\linewidth}
		\includegraphics[  width=.5\linewidth]{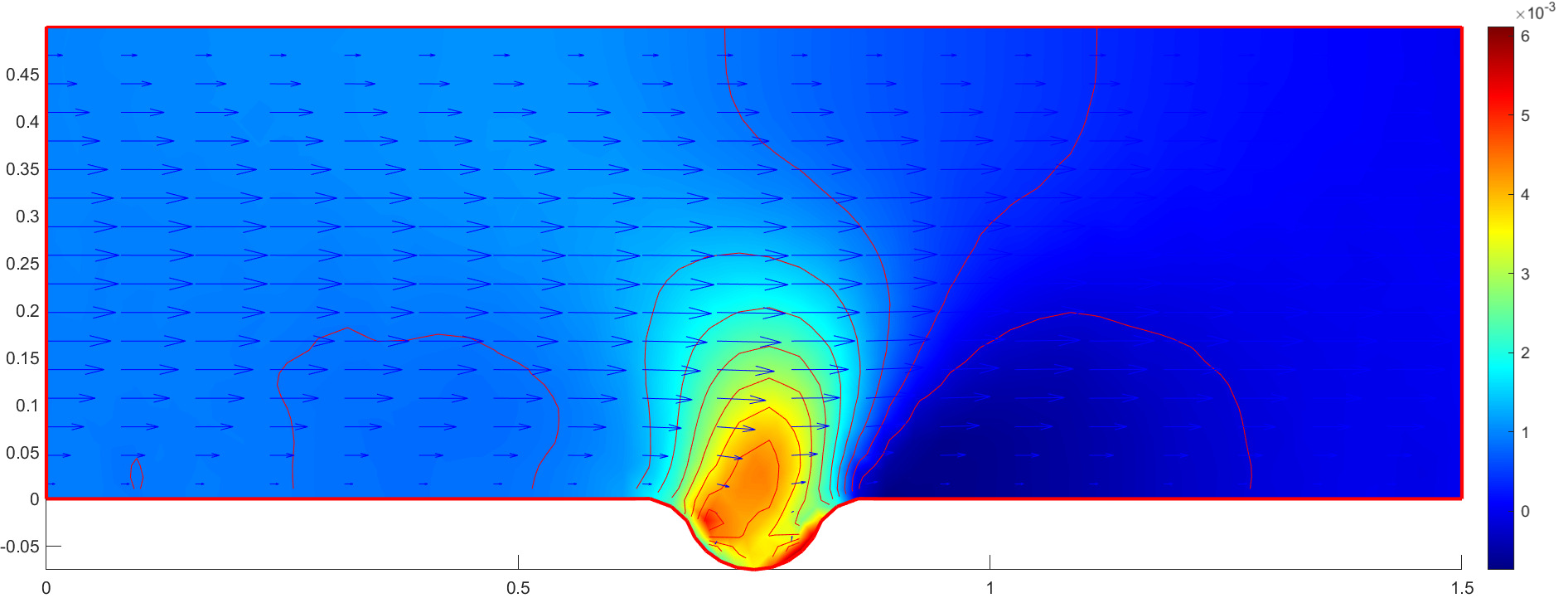}  
		\includegraphics[  width=.5\linewidth]{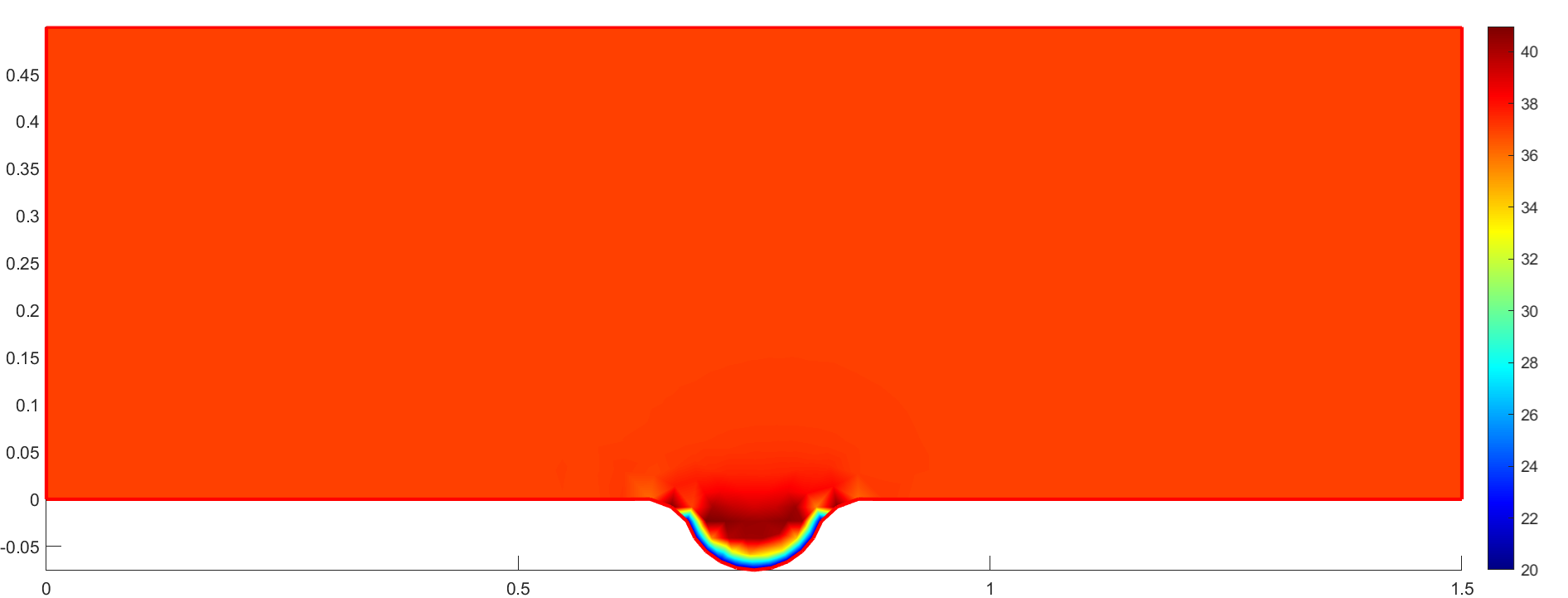}    
	\end{minipage}\\
	\begin{minipage}{\linewidth}
		\includegraphics[  width=.5\linewidth]{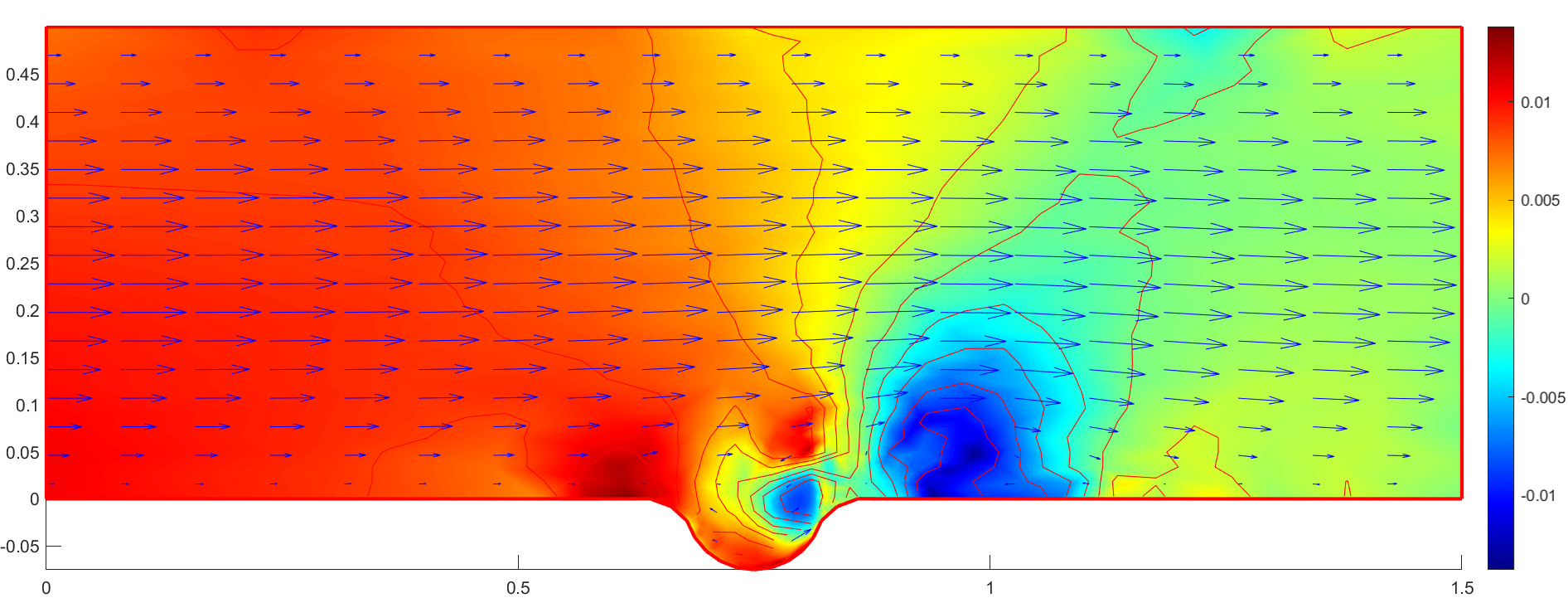}  
		\includegraphics[  width=.5\linewidth]{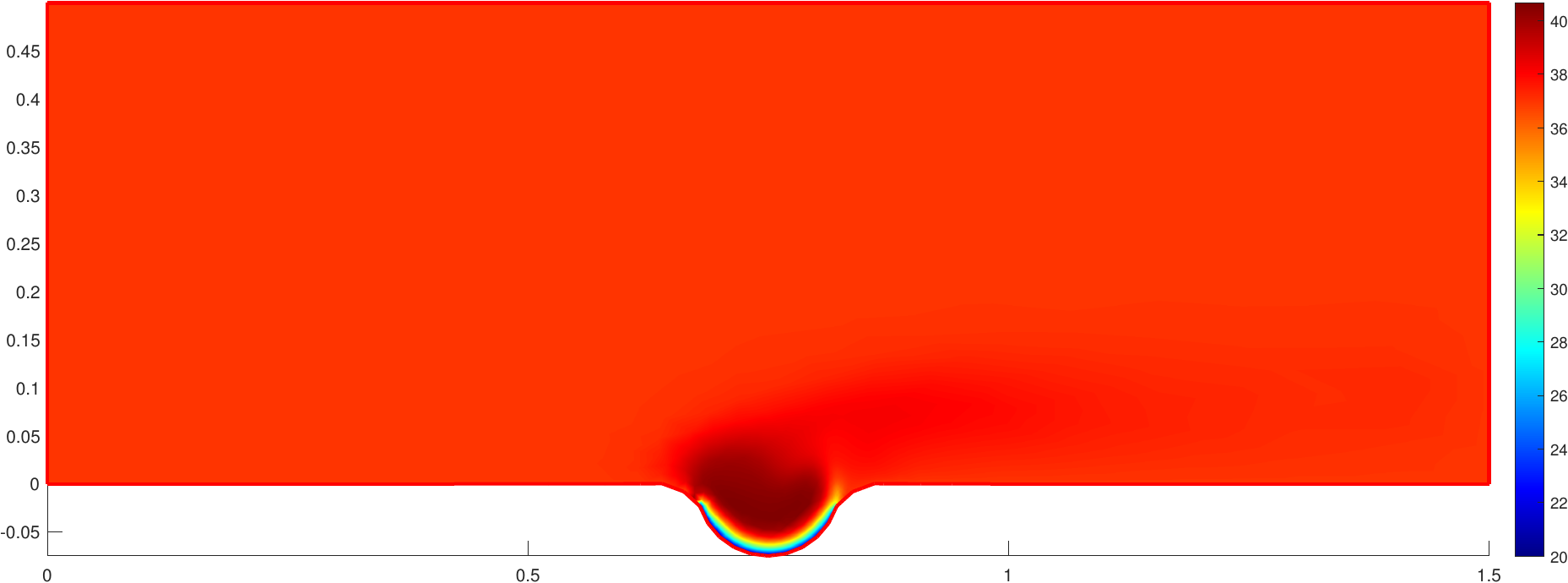}   
	\end{minipage}\\
	\begin{minipage}{\linewidth}
		\includegraphics[  width=.5\linewidth]{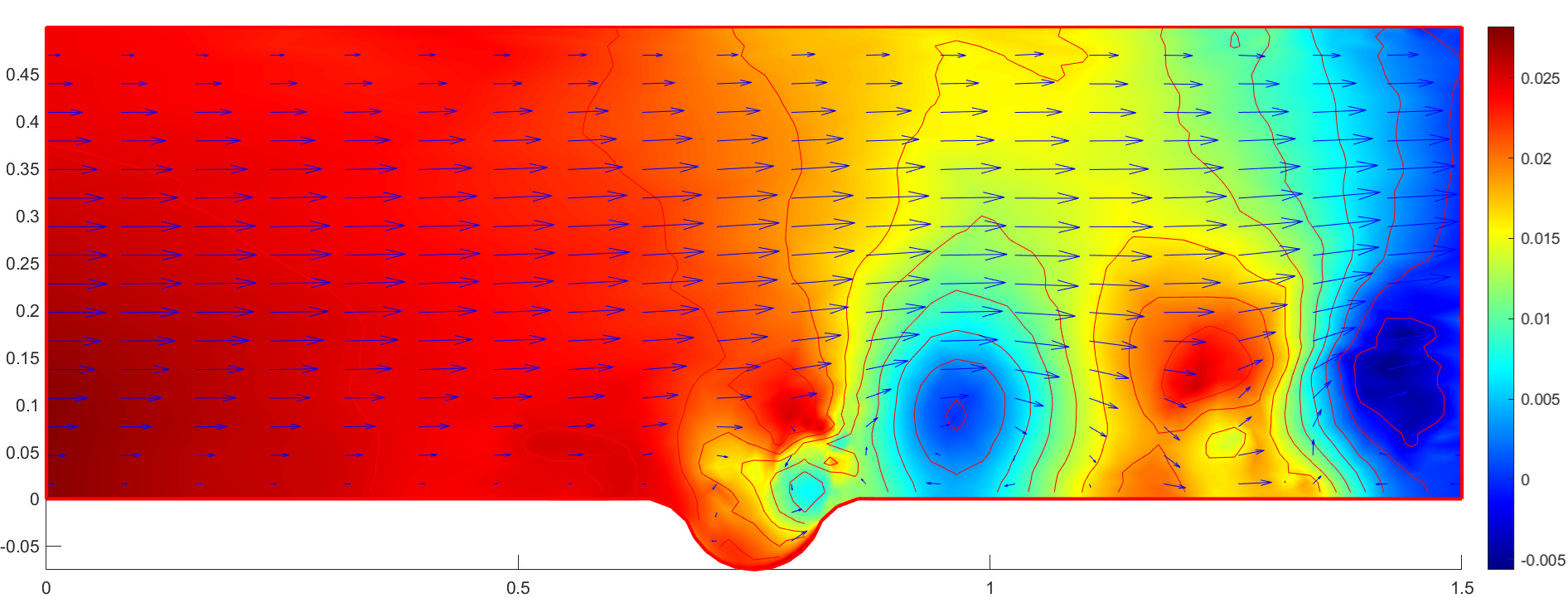}  
		\includegraphics[  width=.5\linewidth]{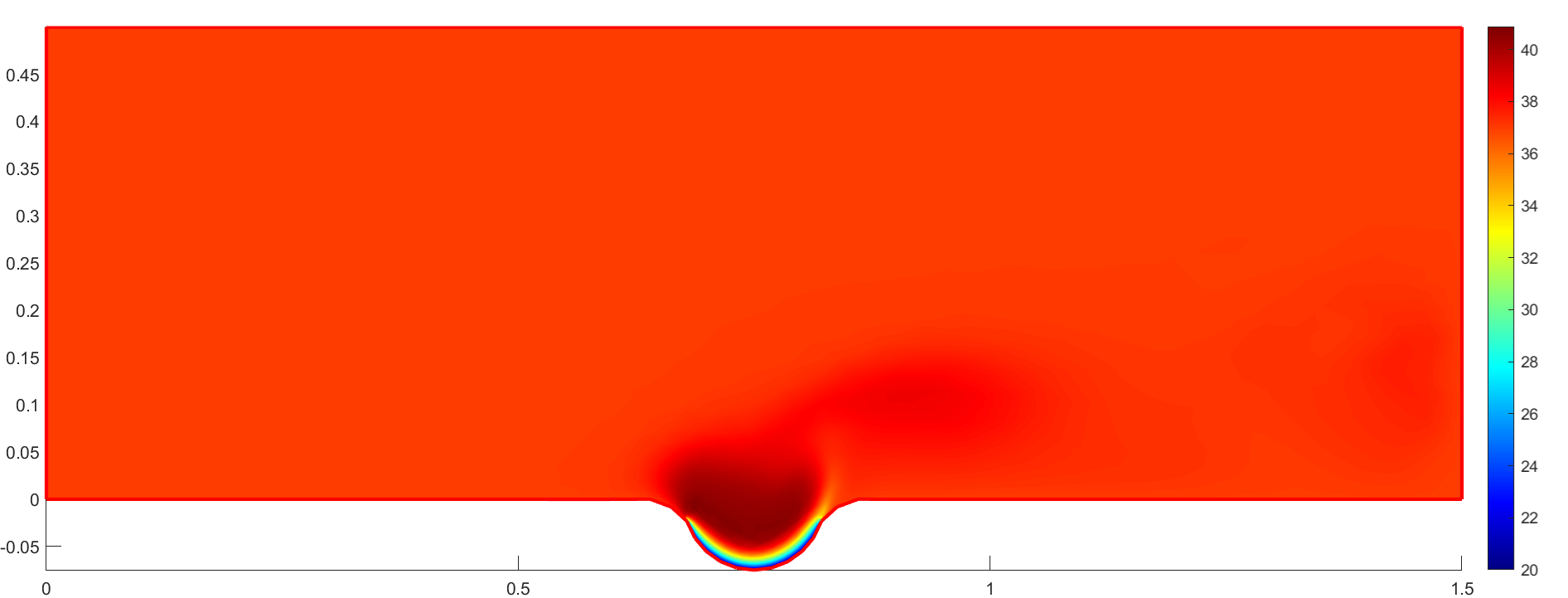}   
	\end{minipage}\\
	\begin{minipage}{\linewidth}
		\includegraphics[  width=.5\linewidth]{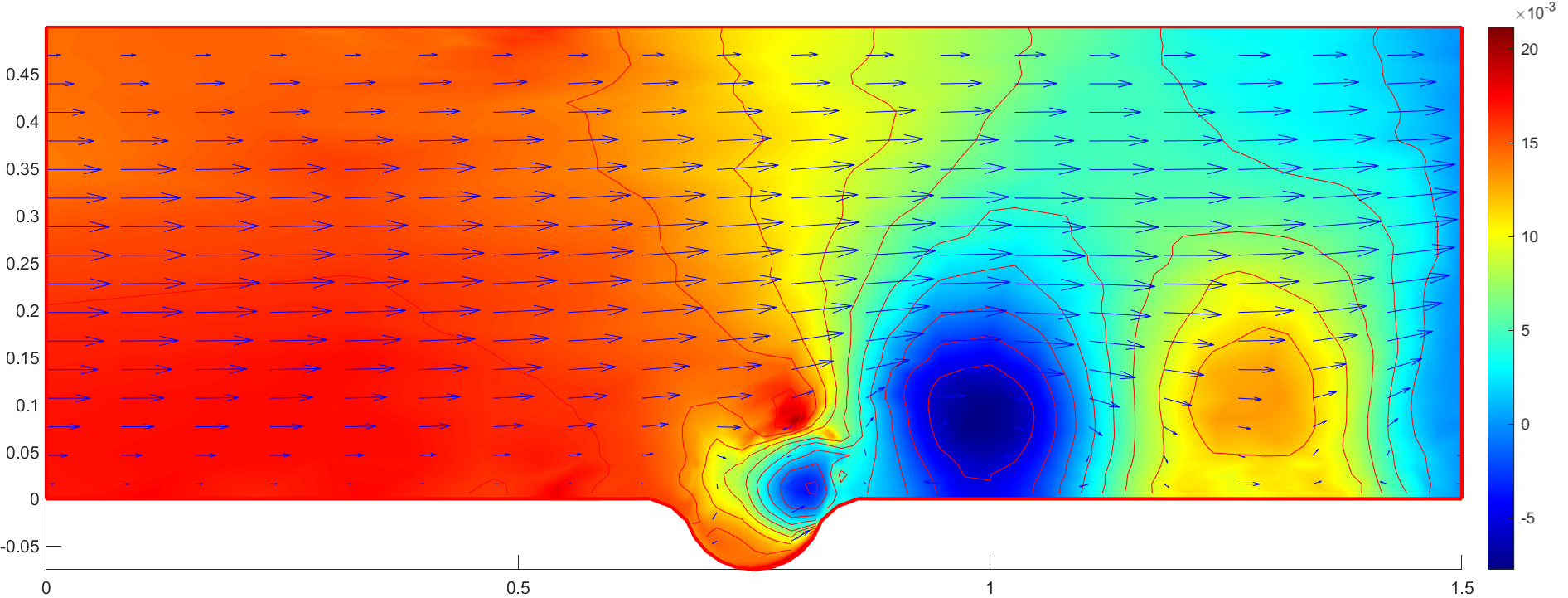}  
		\includegraphics[  width=.5\linewidth]{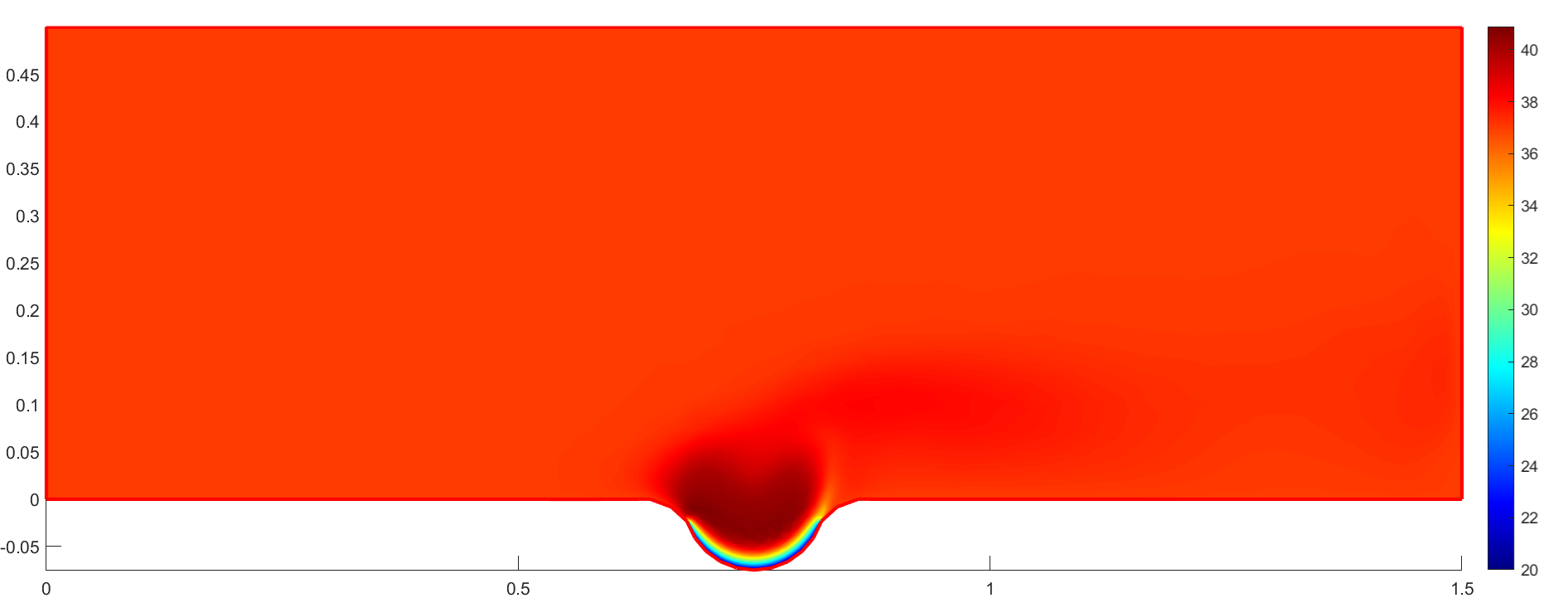}    
	\end{minipage}
	\caption{Example 3 : evolution of velocity and pressure (column 1), heat (column 2) 
		at four time moments $t=\frac{T}{8}$ (line 1), 
		$t= \frac{T}{4}$ (line 2), $t= \frac{T}{2}$ (line 3) and $t=T$ (line 4). }	
	\label{fig:test3}
\end{figure}  
\textcolor{blue}{\subsubsection{Example 4: cooling factor}
	 We mention that with the configuration obtained in Example 3, we have achieved a reduction of the temperature in certain areas of the domain. However, this ceases to work from a certain level and the heat will be balanced because of the domain's homogeneity. 
To this end, we can add other cooling factors by assuming that the heat of the fluid will enter through $\Gamma_1$ with a different temperature than the domain one, i.e. $\theta= 35^{\circ} \mathrm{C}$. The results of this choice are shown in Figure \ref{fig:test4} with the same descriptions as in Figure \ref{fig:test3}.  }

\begin{figure}[pos =!ht]
	\begin{minipage}{\linewidth}
	\includegraphics[  width=.5\linewidth]{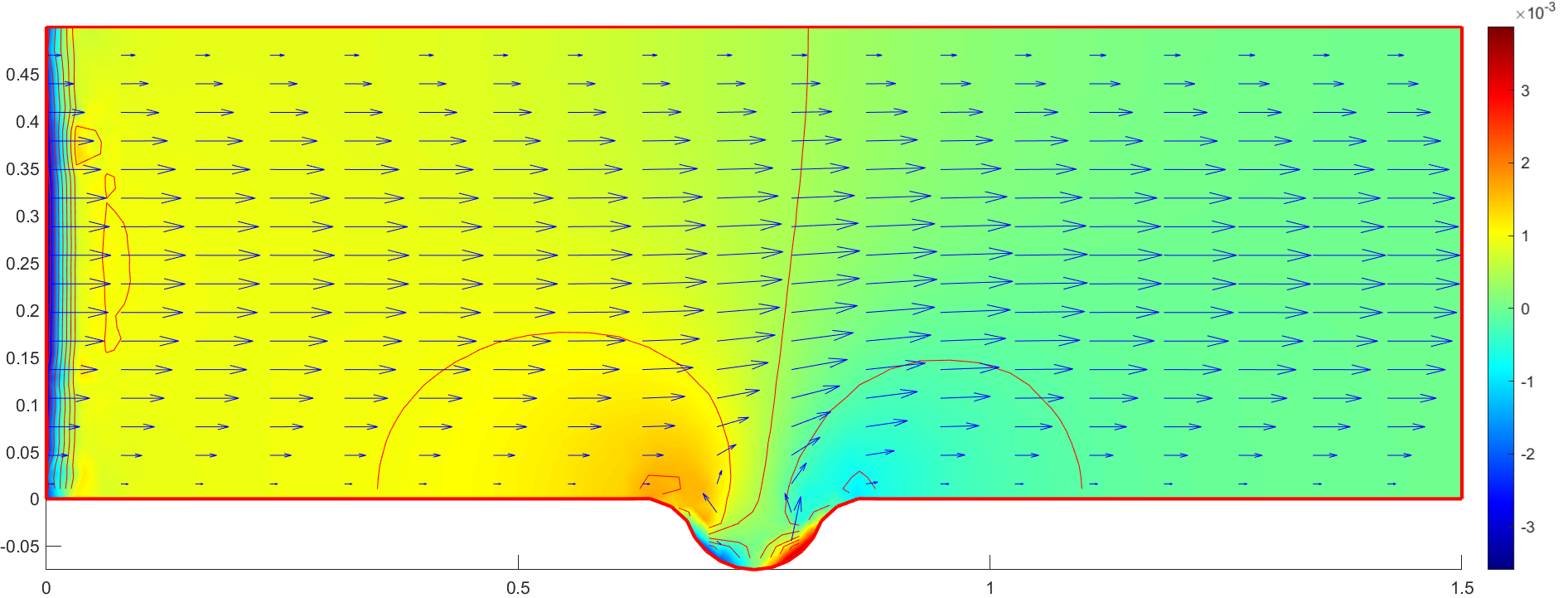}  
	\includegraphics[  width=.5\linewidth]{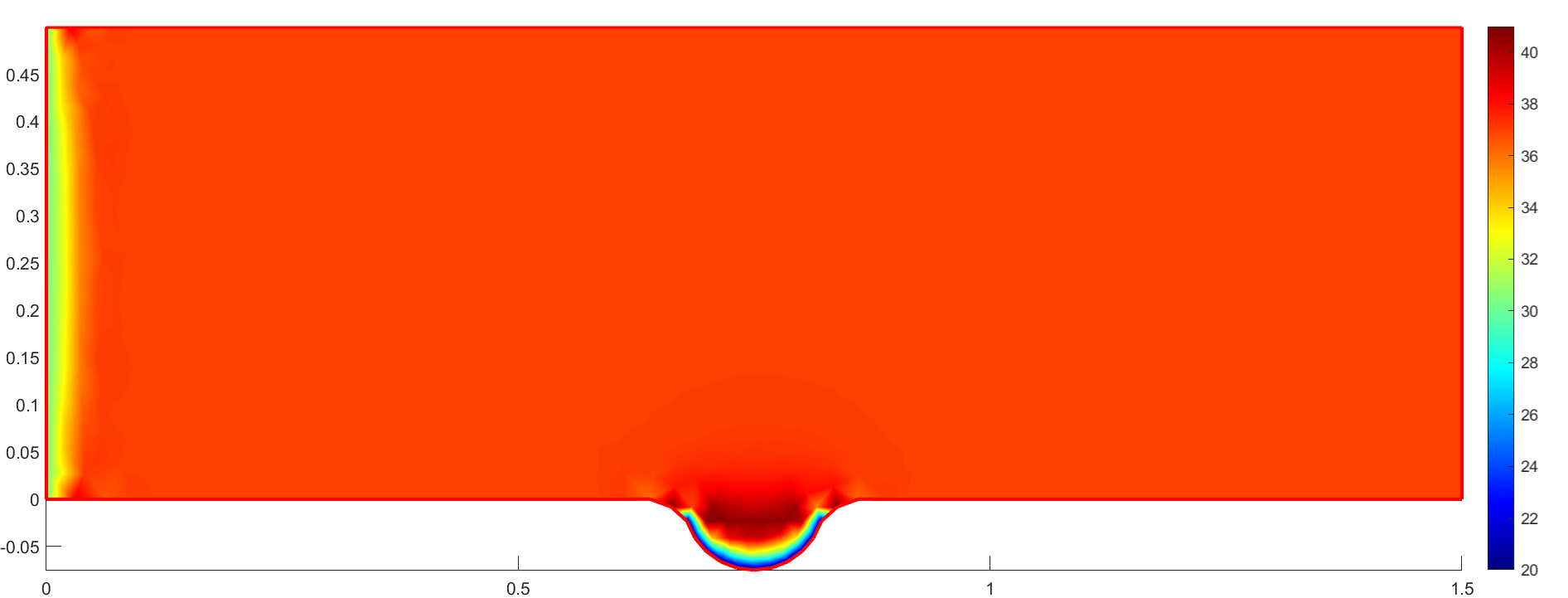}    
\end{minipage}\\
\begin{minipage}{\linewidth}
	\includegraphics[  width=.5\linewidth]{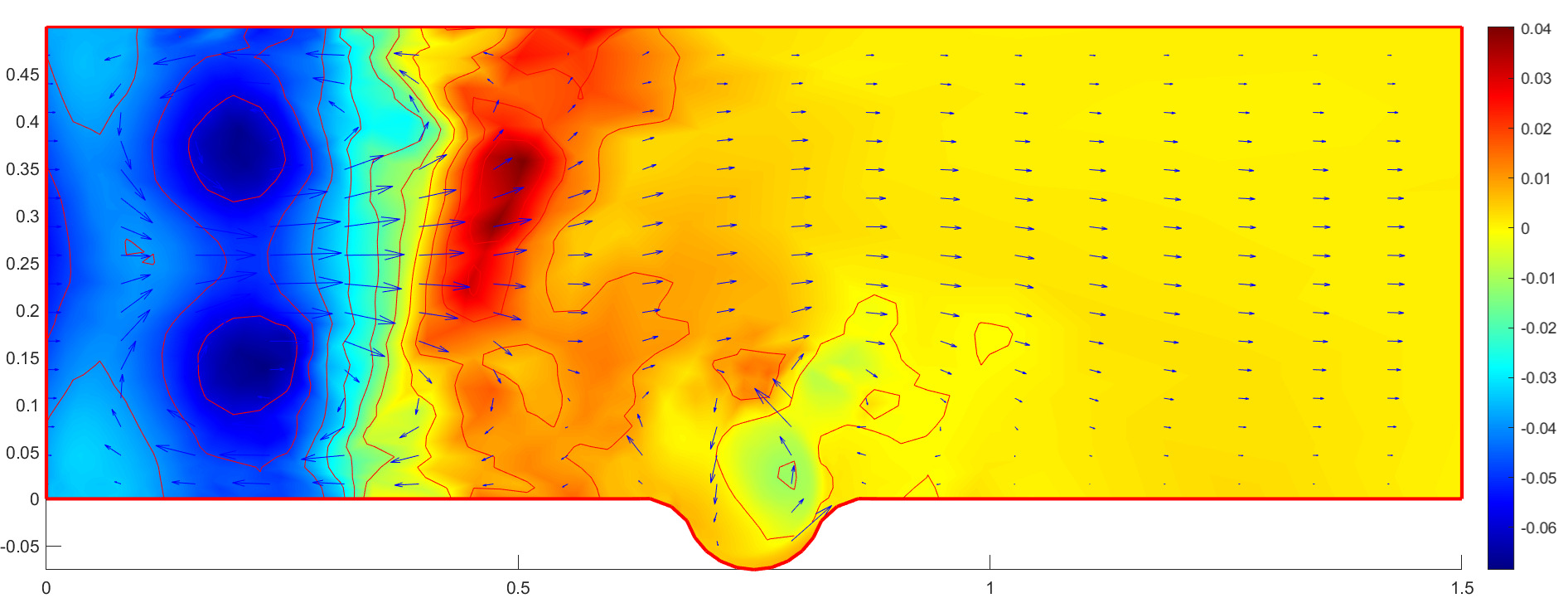}  
	\includegraphics[  width=.5\linewidth]{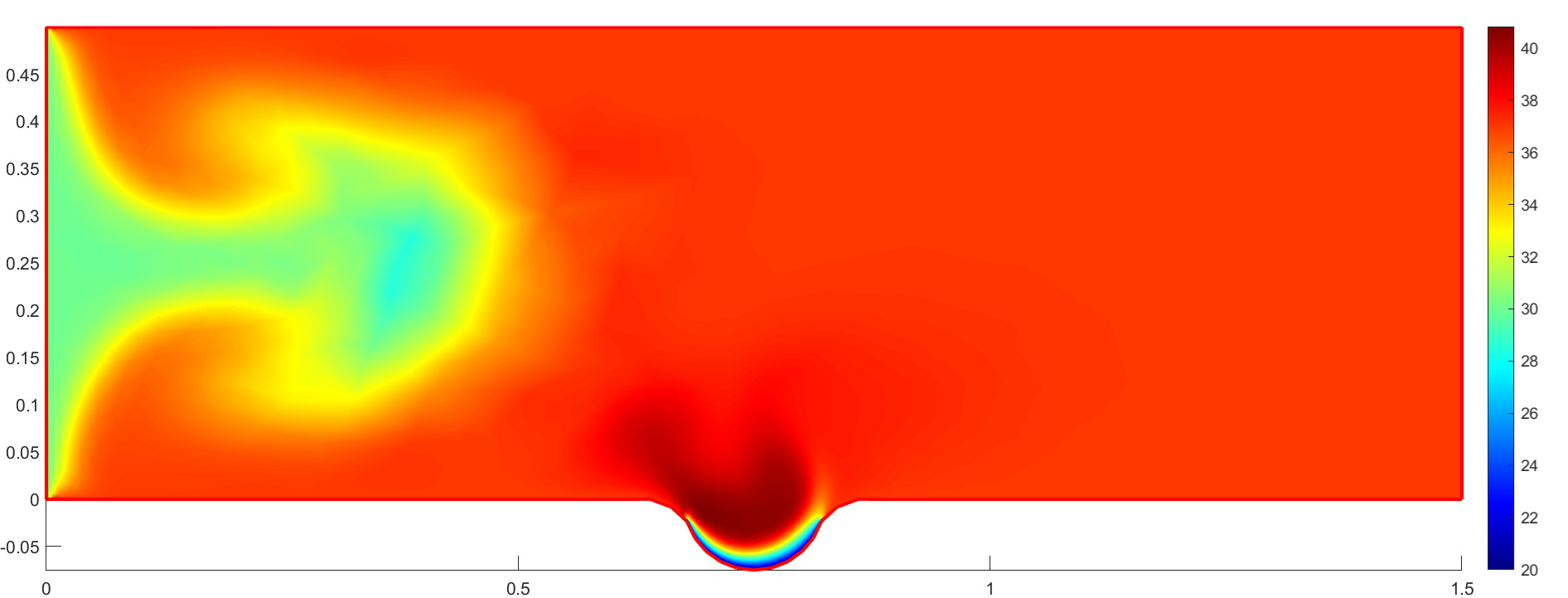}   
\end{minipage}\\
\begin{minipage}{\linewidth}
	\includegraphics[  width=.5\linewidth]{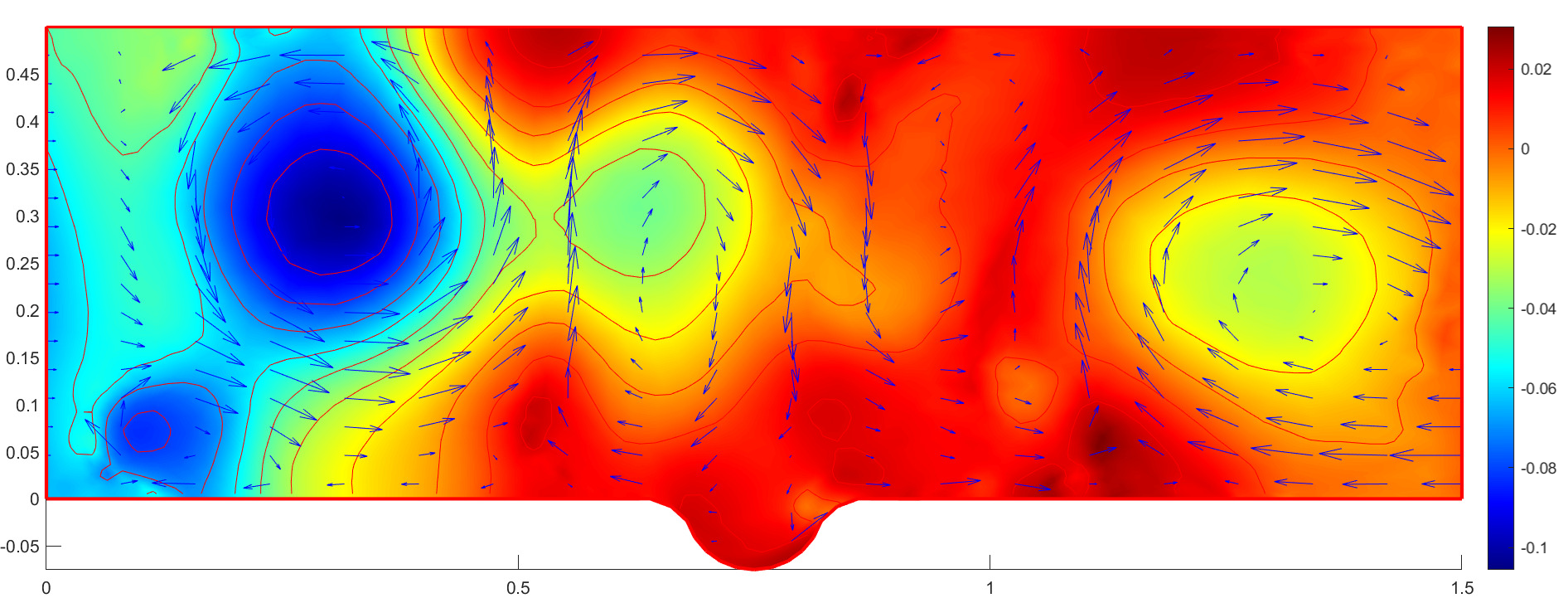}  
	\includegraphics[  width=.5\linewidth]{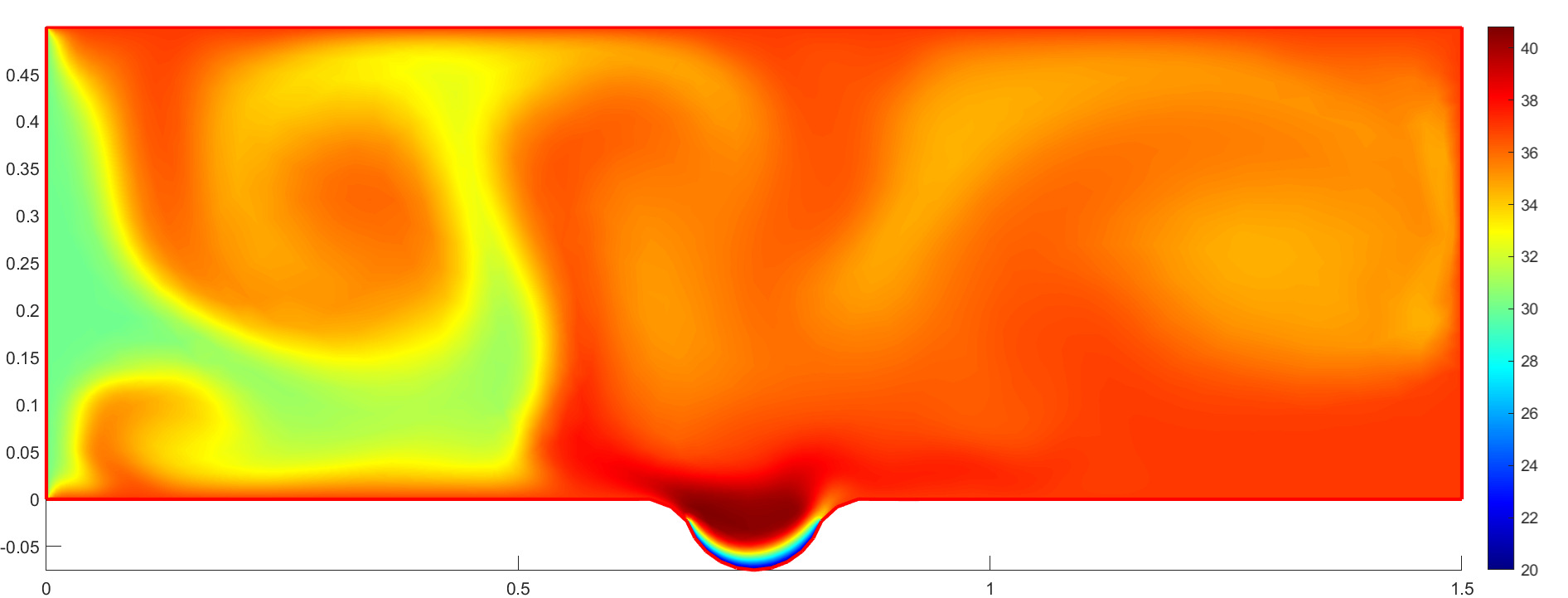}   
\end{minipage}\\
\begin{minipage}{\linewidth}
	\includegraphics[  width=.5\linewidth]{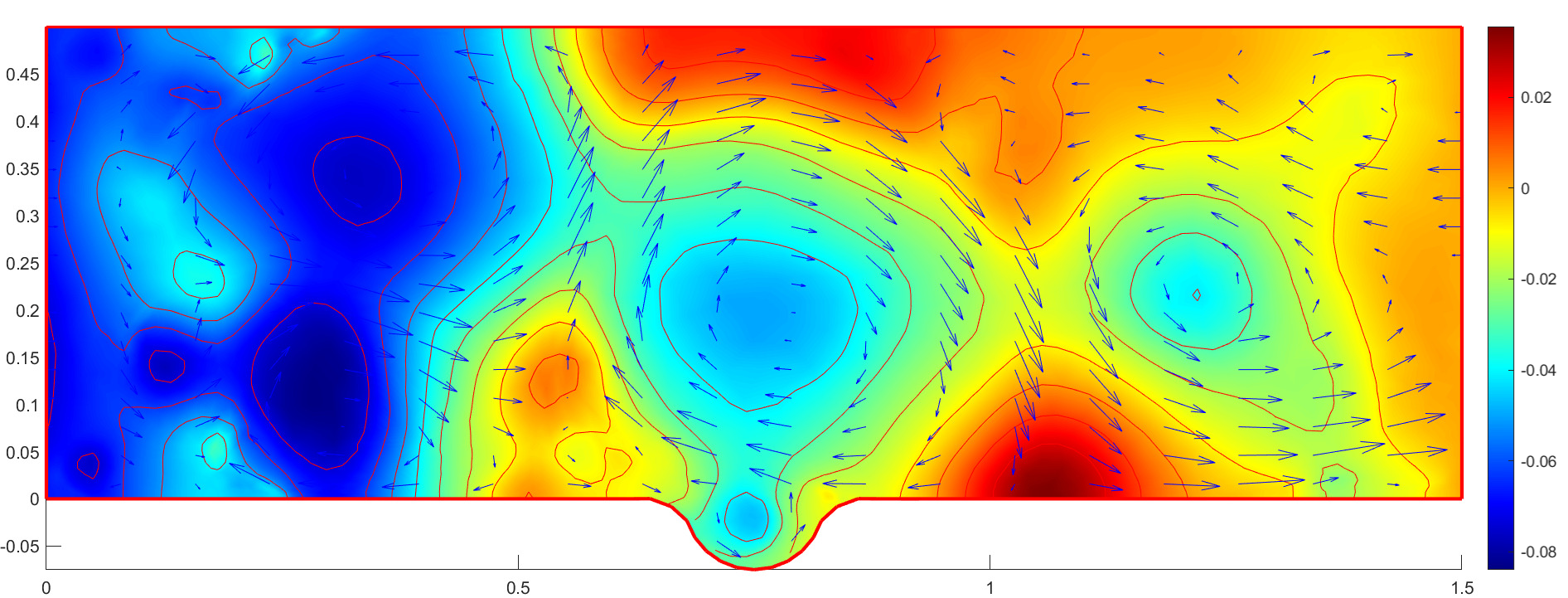}  
	\includegraphics[  width=.5\linewidth]{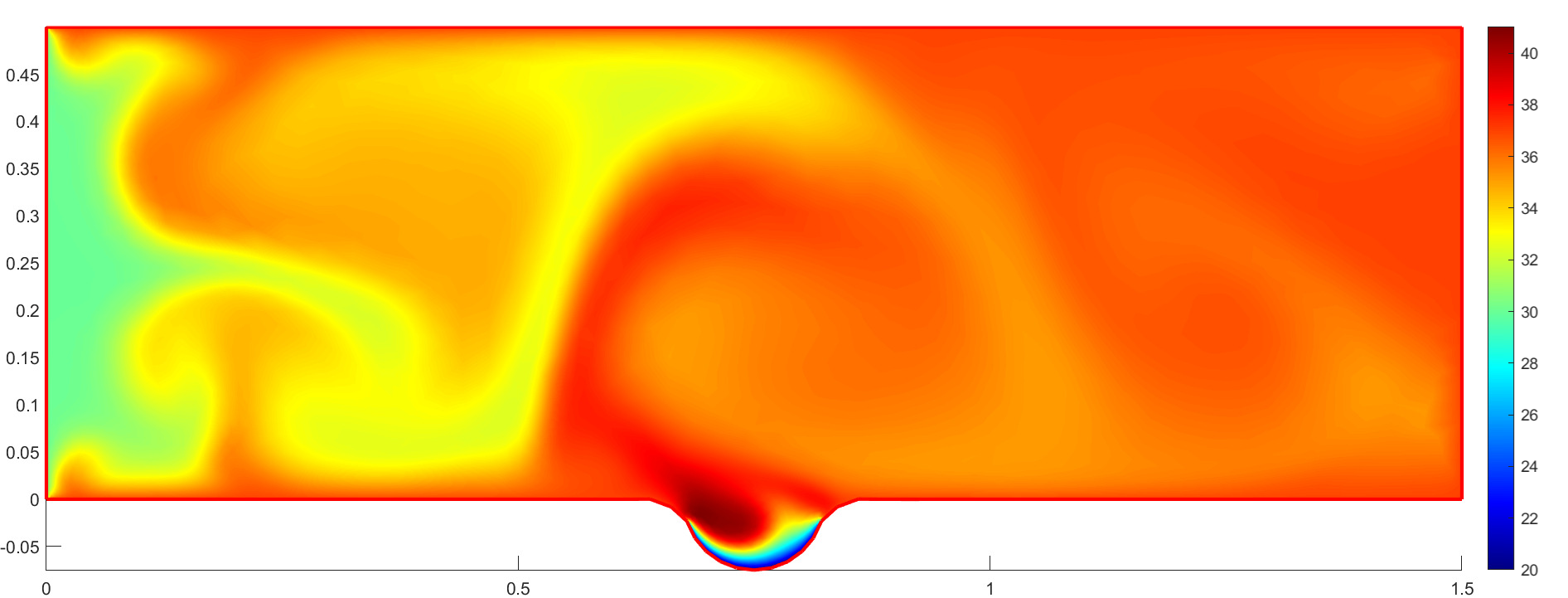}    
\end{minipage}
	\caption{Example 4 : evolution of velocity  and pressure (column 1), heat (column 2) 
		at four time moments $t=\frac{T}{8}$ (line 1), 
		$t= \frac{T}{4}$ (line 2), $t= \frac{T}{2}$ (line 3) and $t=T$ (line 4). }	
	\label{fig:test4}
\end{figure}


\section{Conclusion and perspectives}

In this paper, a nonlinear fluid-heat-potential system modeling radiofrequency ablation phenomena in cardiac tissue has been proposed. The existence of the global solutions using Schauder's fixed-point theory has been demonstrated, as well as their uniqueness under some additional conditions on the data, both in two-dimensional and three-dimensional space.  Numerical simulation in different cases have been illustrated in a two-dimensional space using the finite element method.

The phenomena of radiofrequency ablation in different tissues are procedures that make it possible to predict the temperature of the tissues during these procedures. For this reason, we believe that this work opens up interesting perspectives, such as optimal control models and inverse problems, namely the identification of the frequency factor of different types of tissue. 

As we were equipped in the last section, for $g$ large enough, we notice a rapid increase in temperature as well as in the order of rotation of the fluid. This motivates us to study particular cases where the source terms are less regular, the case $L^1$ for example. {However, it is important to note that the numerical resolution of the shemas proposed in this document is only one of the perspectives for future works}.

Other perspectives consist in deriving system \eqref{System} from a kinetic-fluid model. This can improve our knowledge from the modeling point of view, as the kinetic (mesoscopic) scale gives a more detailed insight into the involved interactions. However, for more details, we refer the interested reader to \cite{ABKMZ20}. Another interesting perspective could be to consider the stochastic aspect, see \cite{BNTZ23,BTZ22,Z23}.

\bibliographystyle{cas-model2-names} 
\bibliography{simple}

\end{document}